\newtheorem{lem}{Lemma}[section]
\newtheorem{thm}[lem]{Theorem}
\newtheorem{cor}[lem]{Corollary}
\theoremstyle{definition}
\newtheorem{exa}[lem]{Example}
\newtheorem{rem}[lem]{Remark}
\title{
Tate Kernels, \'Etale $K$-theory  
and the Gross Kernel}
\author{Kevin Hutchinson}
\address{Department of Mathematics\\
University College Dublin\\ 
Belfield\\
Dublin 4\\ 
IRELAND}
\email{kevin.hutchinson@ucd.ie}
\date{\today}
\keywords{
$K$-theory, Galois cohomology
}
\subjclass{ 11R23, 19F27}
\newcommand{\imp}{\Longrightarrow}
\newcommand{\Q}{\Bbb{Q}}
\newcommand{\F}[1]{\Bbb{F}_{#1}}
\newcommand{\E}{\mathcal{E}}
\newcommand{\Z}{\Bbb{Z}}
\newcommand{\K}{\mathcal{K}}
\newcommand{\M}{\mathcal{M}}
\newcommand{\NN}{{\mathcal{N}}}
\newcommand{\NF}{\tilde{N}}
\newcommand{\XX}{\tilde{X}}
\newcommand{\LL}{\tilde{H}}
\newcommand{\ed}[2]{Q_{#1,#2}}
\newcommand{\h}[1]{\tilde{H}_{#1}}
\renewcommand{\dim}[2]{\mathrm{dim}_{#1}\left(#2\right)}
\newcommand{\<}{\langle}
\renewcommand{\>}{\rangle}
\newcommand{\set}[2]{\{#1 \mid #2 \}}
\newcommand{\psyl}[2]{#1\{ #2\}}
\newcommand{\dvv}[1]{\mathrm{Div}(#1)}
\newcommand{\dual}[1]{{#1}^*}
\newcommand{\pdual}[1]{{#1}^{*}}
\newcommand{\Pdual}[1]{\pdual{(#1)}}
\newcommand{\colim}{\mathrm{colim}}
\renewcommand{\ker}[1]{\mathrm{Ker}(#1)}
\newcommand{\image}[1]{\mathrm{Im}(#1)}
\newcommand{\coker}[1]{\mathrm{Coker}(#1)}
\newcommand{\edm}[1]{\mathrm{End}(#1)}
\renewcommand{\hom}[3]{\mathrm{Hom}_{#1}(#2,#3)}
\newcommand{\aut}[1]{\mathrm{Aut}(#1)}
\newcommand{\units}[1]{#1^{\times}}
\newcommand{\free}[2]{\mathrm{Fr}_{#1}\left( #2 \right)}
\newcommand{\zero}{\{ 0 \}}
\newcommand{\ab}[1]{#1^{\mbox{\tiny ab}}}
\newcommand{\trs}[2]{\mathrm{tor}_{#1}(#2)}
\newcommand{\trlx}{\trs{\Lambda}{X}}
\newcommand{\ntr}[1]{\mathcal{O}_{#1}}
\newcommand{\sntr}[2]{\mathcal{O}_{#2}^{#1}}
\newcommand{\proots}[1]{\mu_{p^\infty}(#1)}
\newcommand{\artin}[3]{\left(\frac{#1,#2}{#3}\right)}
\newcommand{\gal}[2]{\mathrm{Gal}(#1 /#2)}
\newcommand{\cl}[1]{\mathrm{Cl}(#1)}
\newcommand{\cls}[2]{\mathrm{Cl}^{#1}(#2)}
\newcommand{\as}[1]{A^S(#1)}
\newcommand{\un}[2]{\mathrm{U}^{#1}_{#2}}
\newcommand{\norm}[2]{\mathrm{N}_{#1/#2}}
\newcommand{\ram}[2]{S_{\mbox{\tiny ram}}(#1/#2)}
\newcommand{\pun}[1]{\tilde{\mathcal{U}}_{#1}}
\newcommand{\logcls}[1]{\tilde{Cl}_{#1}}
\newcommand{\ldiv}[1]{\tilde{Dl}_{#1}}
\newcommand{\gk}[1]{\tilde{\mathcal{E}}_{#1}}
\newcommand{\gkk}[1]{\mathcal{G}_{#1}}
\newcommand{\rk}[1]{\mathcal{R}_{#1}}
\newcommand{\dl}[1]{\tilde{Dl}_{#1}}
\newcommand{\kf}[1]{K_2 (#1)}
\newcommand{\ket}[3]{K_{#1}^{\mbox{\tiny{\'et}}}(\sntr{#2}{#3})}
\newcommand{\kos}[2]{K_{#1} (\ntr{#2}^S)}
\newcommand{\tatk}[2]{D^{(#2)}_{#1}}
\newcommand{\gtatk}[3]{D^{[#2,#3]}_{#1}}
\newcommand{\ntatk}[3]{D^{(#2,#3)}_{#1}}
\newcommand{\coh}[3]{\mathrm{H}^{#1}(#2,#3 )}
\newcommand{\coinv}[2]{\left( #2\right)_{#1}}
\newcommand{\Inv}[2]{\left( #2\right)^{#1}}
\newcommand{\inv}[2]{#2^{#1}}
\begin{document}

\maketitle


\begin{abstract}
For an odd prime $p$ and a number field $F$ containing a $p$th root of
unity, we study generalised Tate kernels, $\gtatk{F}{i}{n}$, for
$i\in \Z$ and $n\geq 1$,  having  the properties that if 
$i\geq 2$ and if either $p$ does not divide $i$ or $\mu_{p^n}\subset
F$ then there are natural isomorphisms
$\gtatk{F}{i}{n}\cong\ket{2i-1}{S}{F}/p^n$, and that they  are periodic
modulo a power of $p$ which depends on $F$ and $n$. Our main result is
 that if
the Gross-Jaulent conjecture holds for $(F,p)$ then there is a natural
isomorphism $\gtatk{F}{1}{n}\cong\gk{F}/p^n$ where $\gk{F}$ is the
Gross kernel. We apply this result to compute lower bounds for
capitulation kernels in even \'etale $K$-theory.
\end{abstract}

\footnote{The research in this article was partially funded by Irish Research Council for Science, 
Engineering and Technology Basic Research Grant SC/02/265.
}

\section{Introduction}

Let $F$ be an algebraic number field containing the $p$-th roots of
unity $\mu_p$ for some  odd prime number $p$. Let
$S$ be the set of $p$-adic  primes
of $F$ and let $\sntr{S}{F}$ be the ring of $S$-integers in $F$. 

Let $E/F$ be a Galois extension with group $G$. The goal of this paper
is to better understand and to calculate lower bounds for the $K$-theory
capitulation kernels $\ker{f_i}$ where
$f_i:\ket{2i-2}{S}{F}\to\ket{2i-2}{S}{E}^G$  is the natural functorial
map. Our main results apply to the case where $G$ is cyclic of degree
$p^n$.

We follow the strategy established by Assim and Movahhedi in
\cite{am:cap}. Generalising a result of Kahn, \cite{kah:des}, they
observe that, when $G$ is a cyclic $p$-group, $\ker{f_i}$ and $\coker{f_i}$ have the
same order and, in the case where $|G|=p$, they describe this cokernel
in terms of certain `generalised Tate kernels' $\tatk{F}{i}$ which are
subgroups of $F^\times$.  $\tatk{F}{2}$ is the classical Tate kernel;
i.e the group $C_F=\{ a\in \units{F}| \{ a,\zeta_p\}=0\mbox{ in }
K_2(F)\}$. Furthermore, Greenberg has shown that 
if $\tilde{F}$  denotes the compositum of the
$\Z_p$-extensions of $F$ and if $A_F=\{ a\in \units{F}|\
\sqrt[p]{a}\in \tilde{F}\}$, then $\tatk{F}{0}\subset A_F$ with
equality if and only if Leopoldt's conjecture holds for $(F,p)$.   

In this paper we describe `Tate kernels' $\gtatk{F}{i}{n}$ for all
$i\in\Z$ with
the following four properties:
\begin{enumerate}
\item \label{tate}
$\gtatk{F}{i}{1}=\tatk{F}{i}/(\units{F})^p(i-1)$ where $M(j)$
  denotes the $j$th Tate twist of the Galois module $M$.

\item \label{etale}
There are natural isomorphisms 
\[
\gtatk{F}{i}{n}\cong \ket{2i-1}{S}{F}/p^n
\]
for $i\geq 2$ whenever either $p$ does not divide $i$ or $F$ contains
the $p^n$th roots of unity.

\item \label{gross}
If the Gross conjecture holds for $(F,p)$ then
\[
\gtatk{F}{1}{n}\cong \gk{F}/p^n
\] 
where $\gk{F}$ is the Gross kernel of $F$. 

\item \label{period}
The Tate kernels have a natural periodicity property: 

There exists a
  number $m=m_n(F)\geq 0$ such that 
\[
\gtatk{F}{j}{n}=\gtatk{F}{i}{n}(j-i)
\]
whenever $i\equiv j\pmod{p^m}$. Furthermore $m=0$ if $\mu_{p^r}\subset
F$ for sufficiently large $r$.
\end{enumerate}

These groups, and variations on them, have been studied in a number of
places. The central ideas go back to Tate, \cite{tate:galcoh}, and
property (\ref{etale}) (at least for $i=2$) is implicit there. The
periodicity property is essentially found in Greenberg's paper, \cite{gr:zp}, but
see also Assim and Movahhedi, \cite{am:cap}, Lemma 2.1 and Vauclair,
\cite{vauclair:cap}, section 4. The main contribution of the present
paper is property (\ref{gross}), although it should be noted that a key step in the proof is
Theorem 2.3 of Kolster, \cite{kol:idel}.
In particular, (\ref{gross}) implies that if $F$ has only one $p$-adic
prime then $\tatk{F}{1}=U^S_F\cdot(\units{F})^p$. This, in conjunction
with property (\ref{period}), explains the
theorem of Kersten, \cite{ik:zp}, that if $F=\Q(\zeta_{p^r})$ for sufficiently large
$r$, then $A_F= U^S_F\cdot(\units{F})^p=C_F$. Our result in this paper
grew out of
the attempt to understand this theorem of Kersten. 

The layout of the paper is as follows: In section \ref{sec:tate}, we
introduce the  Tate kernels and establish properties
(\ref{tate}), (\ref{gross}) and (\ref{period}). In section \ref{sec:coh}, we
give a cohomological description of  the Tate kernels and use this to
establish property (\ref{etale}), as well as to prove certain basic
algebraic properties which have already been used section
\ref{sec:tate}.

In section \ref{sec:gen} we show how to describe the groups
$\coker{f_i}$, when $E/F$ is a cyclic $p$-extension, in terms of the
Tate kernels and in the last two sections we apply these results to
give lower bounds for $\ker{f_i}$. In particular, in section
\ref{sec:wild}, we deal with cyclic $p$-extensions in which there are
no tamely-ramified primes. We exploit the relationship between the
Gross kernel and the logarithmic class group to show  (Theorem
\ref{thm:cyclo}) that if $E/F$ is a finite layer of
the $p$-cyclotomic extension of $F$ and if the Gross conjecture holds
for $(E,p)$, then for infinitely many $i$,
$|\ker{f_i}|=|\ker{f_1}|$ where $f_1$ is the natural functorial map
$\logcls{F}\to\logcls{E}^G$ of logarithmic class groups. We give
examples both of cyclotomic and of non-cyclotomic extensions for which the
$\ker{f_i}$ are all non-trivial.

\textit{Notation:} For an abelian group $A$, $A[n]$ and $A/n$ denote
the kernel and cokernel respectively of multiplication by $n\in
N$. 
$\dvv{A}$
denotes the maximal divisible subgroup of $A$. 

If $R$ is an integral domain and $M$ is an $R$-module, then
$\trs{R}{M}$ is the torsion submodule of $M$ and
$\free{R}{M}=M/\trs{R}{M}$.

For a field $F$, $G_F$ denotes its Galois group. The Tate module for
$(F,p)$ is the $\Z_p[G_F]$-module $\Z_p(1):=\lim_i\mu_{p^i}$, the
(inverse) limit being taken over the natural surjections
$\mu_{p^{i+1}}\to\mu_{p^i}$, $\zeta\mapsto \zeta^p$. More generally,
for any $i\in\Z$, we have $\Z_p(i):=\Z_p^{\otimes i}$ for $i\geq 1$,
$\Z_p(0):=\Z_p$ and $\Z_p(-i):=\hom{\Z_p}{\Z_p(i)}{\Z_p}$ for $i\geq
1$. For any $\Z_p$-$G_F$-module $M$, the $i$th Tate twist is the
$\Z_p$-$G_F$-module $M(i):= M\otimes_{\Z_p}\Z_p(i)$ (with diagonal
Galois action). Since $\Z_p(i)$ is isomorphic to $\Z_p$ as a
$\Z_p$-module, $M(i)$ is isomorphic to $M$ with a twisted
Galois-action. Observe that if $M=M[p^r]$, then the natural surjection
$\Z_p(i)\to \mu_{p^r}^{\otimes i}$ induces an isomorphism $M(i)\cong
M\otimes   \mu_{p^r}^{\otimes i}$.

By the \emph{Pontryagin dual} of the $\Z_p$-$G_F$-module $M$ we will
mean  the module $\pdual{M}=\hom{\Z_p}{M}{\Q_p/\Z_p}$.

\section{Generalised Tate Kernels}
\label{sec:tate}

Everywhere below $p$ is an odd prime, $F$ is a number field
containing the $p$-th roots of unity, $\mu_p$, and $S=S_p(F)$ is the
set of prime ideals lying over $p$. 
Let $F_n=F(\mu_{p^n})$ and let $F_\infty=\cup_{n=1}^\infty F_n$. Let
$\Gamma=\gal{F_\infty}{F}$. Let $\cls{S}{F}$ denote the $S$-classgroup of $F$ and let $\as{F}$
denote the $p$-Sylow subgroup $\cls{S}{F}\{ p\}$.  Let also 
$
\proots{F}=F\cap\mu_{p^\infty}=\mu_{p^{n_F}}
$
be the group of $p$-power roots of unity in $F$. We will let $F_S/F$ be the
maximal algebraic extension of $F$ unramified outside the $S$ and we
will let $G^S_F:=\gal{F_S}{F}$.

As usual, $\Lambda$ will denote the Iwasawa algebra
$\Z_p[[\Gamma]]:=\lim\Z_p[\Gamma/p^n]$. If we fix a topological
generator, $\gamma_0$, of $\Gamma$, then we get an isomorphism of
topological rings  
$\Lambda\cong \Z_p[[T]]$, $\gamma_0\leftrightarrow 1+T$.

Let $K$ be the maximal abelian pro-$p$ extension of $F_\infty$ and let
$\K = \units{F_\infty}\otimes \Q_p/\Z_p$. Kummer Theory gives a
perfect duality pairing
\begin{eqnarray}\label{eqn:pair}
\left\<\  ,\  \right\>:\K\times \gal{K}{F_\infty}\to \mu_{p^\infty}
\end{eqnarray}
determined by the formula
\[
g(\sqrt[p^n]{a})=\left\< a\otimes\frac{1}{p^n},g\right\>\sqrt[p^n]{a}
\]
for $g\in \gal{K}{F_\infty}$, $a\in \units{F_\infty}$ and $n\geq 1$.
Furthermore, each term is naturally a $\Gamma$-module and this pairing
is compatible with the $\Gamma$-actions in the sense that
$(\gamma(\alpha),\gamma(g))=\gamma((\alpha,g))$ for all
$\gamma\in\Gamma$, $\alpha\in \K$ and $g\in\gal{K}{F_\infty}$.

Observe that for all $i\in\Z$, 
\[
\K(i)= F_\infty^\times\otimes \Q_p/\Z_p(i)=\bigcup_n
F_\infty^\times\otimes \mu_{p^n}^{\otimes i}=
\bigcup_n
F_\infty^\times/(F_\infty^\times)^{p^n}\otimes \mu_{p^n}^{\otimes i}.
\]

Now let $M$ be the maximal abelian pro-$p$ extension of $F_\infty$
which is unramified outside $p$. Let $X=\gal{M}{F_\infty}$ and let
$\M$ be the subgroup of $\K$ corresponding to $X$ under the pairing
(\ref{eqn:pair}); i.e. $\M$ is the orthogonal complement of $\gal{K}{M}$ with
respect to the pairing. There is thus an induced perfect pairing 
$\M\times X\to \mu_{p^\infty}$. Let  $\NN$ be the complement of
$\trlx$, the maximal $\Lambda$-torsion submodule of $X$,  with respect to this latter
pairing. We also let  $\NF/F_\infty$ be the corresponding field
extension (so that $\trlx = \gal{M}{\NF}$). 

For each $i\in\Z$ we define the subgroups $\tatk{F}{i}$ of $\units{F}$:
\[
\tatk{F}{i}=\set{ a\in \units{F} }{ a\otimes \chi \in
  \dvv{\NN(i-1)^\Gamma}\mbox{ for all }\chi \in \mu_p^{\otimes (i-1)}}.
\]

Observe, that if $a\in (\units{F_\infty})^p$, then $a\otimes\chi=1$
for all $\chi\in \mu_p^{\otimes (i-1)}$. Thus
$\proots{F}\cdot(\units{F})^p\subset\tatk{F}{i}$ for all $i$. 

\begin{rem}
In \cite{am:cap}, Assim and Movahhedi introduce the groups
$\tatk{F}{i}$, $i\geq 2$, but define them using $\K$ instead of
$\NN$. However, as they observe, when $i\geq 2$, this defines the same
group. While the $\Lambda$-module $\K$ is, at first glance, a more
natural object, there are advantages to using $\NN$ in the
definition of the groups $\tatk{F}{i}$. In the first place, Iwasawa has proved some very strong
results on the $\Lambda$-module structure of the dual group of $\NN$. 
Furthermore, using
$\NN$ instead of $\K$, we get a sequence of groups with analogous
properties defined for \emph{all} $i\in \Z$, including $i=1$, which is
the main focus of this paper.
\end{rem}

Regarding the structure of $\NN$,  the 
following is known:
 
Let  $\XX=\free{\Lambda}{X}=X/\trlx$ be the dual of $\NN$ with respect to the Kummer
pairing.
Iwasawa, \cite{iwa:zp} Theorem 17, proved that there is a short exact
sequence of $\Lambda$-modules  
\[
0\to \XX \to \Lambda^{r_2}\to \h{F}\to 0
\]
with $\h{F}$ finite. Greenberg, \cite{gr:zp}, pointed out that the
arguments of Iwasawa show that $\h{F}$ is abstractly isomorphic to (in
fact, Kummer dual to
) the group
$\ker{\as{F_n}\to \as{F_\infty}}$ for all sufficiently large $n$. We will use this fact below. (Coates, \cite{coates:k2}, also
showed that $\h{F}$ is abstractly isomorphic to
$\ker{K_2(\ntr{F_n}^S)\to K_2(\ntr{F_\infty}^S)}$ for all sufficiently
large $n$.  For a more precise assertion about limits, see Kahn,
\cite{kah:des}, Th\'eor\`eme 6.2.)

We deduce the following well-known fact:
\begin{lem} 
For all $i\in \Z$,
\[
\dvv{\NN(i)^\Gamma}\cong (\Q_p/\Z_p)^{r_2}.
\]
\end{lem}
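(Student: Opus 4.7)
The strategy would be to Pontryagin-dualize Iwasawa's exact sequence with a Tate twist, take $\Gamma$-cohomology, and reduce the problem to a direct computation of the twisted $\Gamma$-invariants of the Pontryagin dual of $\Lambda$.

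First I would apply the exact functor $\hom{\ZZ{p}}{-}{\QZ{p}(i+1)}$ to the Iwasawa sequence $0\to\XX\to\Lambda^{r_2}\to\h{F}\to 0$ (note that $\QZ{p}(i+1)$ is injective as a $\ZZ{p}$-module). Using that the Kummer pairing identifies $\NN(i)$ with $\hom{\ZZ{p}}{\XX}{\QZ{p}(i+1)}$, this yields a short exact sequence of $\Gamma$-modules
\[
0 \to \pdual{\h{F}}(i+1) \to D(i+1)^{r_2} \to \NN(i) \to 0,
\]
where $D := \hom{\ZZ{p}}{\Lambda}{\QZ{p}}$ and $\pdual{\h{F}}(i+1)$ is finite.

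The crux is then to establish that $D(j)^\Gamma \cong \QZ{p}$ for every $j\in\Z$. Let $\chi\colon\Gamma\to\units{\ZZ{p}}$ denote the cyclotomic character, so $\chi(\gamma_0)\in 1+p\ZZ{p}$ because $F$ contains $\mu_p$. A twisted $\Gamma$-invariant element of $D(j)$ is a continuous $\ZZ{p}$-linear map $\phi\colon\Lambda\to\QZ{p}$ satisfying $\phi(\gamma_0\lambda)=\chi(\gamma_0)^j\phi(\lambda)$, and such a $\phi$ is determined by $a:=\phi(1)$. Writing $\omega_n:=\gamma_0^{p^n}-1$, continuity amounts to $\phi(\omega_n)=0$ for some $n$, equivalently $(\chi(\gamma_0)^{jp^n}-1)\,a=0$ in $\QZ{p}$; since $v_p(\chi(\gamma_0)^{jp^n}-1)\to\infty$ as $n\to\infty$, this constraint is satisfied for $n$ sufficiently large, so every $a\in\QZ{p}$ yields a valid $\phi$. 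Hence $D(j)^\Gamma \cong \QZ{p}$.

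Finally, the long exact sequence in $\Gamma$-cohomology of the short exact sequence above, together with the fact that $\Gamma\cong\ZZ{p}$ has cohomological dimension one, gives a four-term exact sequence
\[
0 \to A \to (\QZ{p})^{r_2} \to \NN(i)^\Gamma \to B \to 0
\]
in which $A=\pdual{\h{F}}(i+1)^\Gamma$ and $B\subseteq \coh{1}{\Gamma}{\pdual{\h{F}}(i+1)}$ are both finite. The image of the middle arrow is a quotient of the divisible group $(\QZ{p})^{r_2}$ by a finite subgroup, hence still divisible of $\ZZ{p}$-corank $r_2$, and it is the maximal divisible subgroup of $\NN(i)^\Gamma$ because the residual quotient $B$ is finite. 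This delivers $\dvv{\NN(i)^\Gamma}\cong(\QZ{p})^{r_2}$. The only potentially delicate step is the continuity verification for $D(j)^\Gamma$; the rest is a routine exercise in Pontryagin duality and $\Gamma$-cohomology.
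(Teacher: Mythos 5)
Your proposal is correct and is essentially the paper's own argument: both rest on Pontryagin duality between $\NN$ and $\XX$, Iwasawa's exact sequence $0\to\XX\to\Lambda^{r_2}\to\h{F}\to 0$, the finiteness of $\h{F}$, and the identification of the (twisted) dual of $\Lambda$-coinvariants with $\Q_p/\Z_p$ — you merely dualize before passing to $\Gamma$-(co)invariants and compute $D(j)^\Gamma$ by hand, where the paper instead uses $\Lambda(t)\cong\Lambda$ and $\Lambda/T\Lambda\cong\Z_p$ and dualizes at the end. The only small point to fix is that $D$ must be the \emph{continuous} (Pontryagin) dual of $\Lambda$, as your invariance computation implicitly assumes, with exactness of the dualized sequence justified by Pontryagin duality for compact $\Z_p$-modules rather than abstract injectivity.
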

\begin{proof}
Taking the $\Gamma$-action into account, the Kummer pairing gives an
isomorphism of $\Lambda$-modules $\NN(i)\cong \pdual{\XX(-1-i)}$. It
follows that 
\[
\NN(i)^\Gamma\cong \Pdual{\XX(-1-i)_\Gamma}=\Pdual{\XX(-1-i)/T\XX(-1-i)}.
\]
Twisting the exact sequence of Iwasawa by $-1-i$, and using the fact
that $\Lambda(t)\cong \Lambda$ for all $t\in \Z$, gives the exact
sequence
\[
0\to \XX(-1-i) \to \Lambda^{r_2}\to \h{F}(-1-i)\to 0.
\]
Thus the natural map $\XX(-1-i)/T\XX(-1-i)\to
\Lambda^{r_2}/T\Lambda^{r_2}=\Z_p^{r_2}$ has finite kernel and
cokernel. Taking Pontryagin duals again, it follows that there is a
map 
$(\Q_p/\Z_p)^{r_2}\to \NN(i)^\Gamma$ with finite kernel and cokernel.
\end{proof}

 Greenberg (\cite{gr:zp}, p1238) conjectured that
$\dvv{\K(t)^\Gamma}=\dvv{\NN(t)^\Gamma}$ for all
$t\in\Z\setminus\zero$. As he pointed out, the analogous statement for
 $t=0$ cannot possibly hold since  
$ \dvv{\inv{\Gamma}{\K}}\supset\units{F}\otimes
\Q_p/\Z_p$ which is a countable direct sum of copies of
$\Q_p/\Z_p$. Note that if the conjecture is true for
any given $t\in \Z$, then it follows that $\tatk{F}{t+1}=
\set{ a\in \units{F} }{ a\otimes \chi \in
  \dvv{\K(t)^\Gamma}\mbox{ for all }\chi \in \mu_p^{\otimes t}}$. 
If we interpret the groups $\K(t)^\Gamma$ in terms of Galois
cohomology (see section \ref{sec:coh} below), Greenberg's conjecture
is equivalent to Schneider's conjecture in \cite{sch:galcoh} that the
groups $\coh{2}{G^S_F}{\Q_p/\Z_p(i)}$ vanish for all $i\not= 1$. This
latter conjecture was proved by Soul\'e (\cite{sou:etcoh}) when $i\geq
2$ using Borel's theorem (\cite{bor:coh}) on the finiteness of the groups $\kos{2i}{F}$.
It follows, as remarked above,  that for $i\geq 2$ we have 
\[
\tatk{F}{i}=
\set{ a\in \units{F} }{ a\otimes \chi \in
  \dvv{\K(i-1)^\Gamma}\mbox{ for all }\chi \in \mu_p^{\otimes (i-1)}}.
\]

Greenberg showed in \cite{gr:zp} that 
$\tatk{F}{2}$ is the classical Tate Kernel of $F$; the group\\
 $C_F:=
\set{a\in\units{F}}{\{ a, \zeta_p\} = 0 \in \kf{F}}$. 

Furthermore, when $i=0$,
Greenberg  showed that 
\[
\set{ a\in \units{F} }{ a\otimes \chi \in
  \dvv{\K(-1)^\Gamma}\mbox{ for all }\chi \in \mu_p^{\otimes (-1)}}=A_F:=
\{ a\in F^\times \mid \sqrt[p]{a}\in \tilde{F_1} \}
\]
 where
$\tilde{F_1}$ is the compositum of the first layers of the
$\Z_p$-extensions of $F$. Thus,  $\tatk{F}{0}\subset A_F$ and
 Greenberg's conjecture in this case is that equality holds here.
This is equivalent to Leopoldt's conjecture for the pair $(F,p)$ since
$\dim{\F{p}}{\tatk{F}{i}/(\units{F})^p}=1+r_2$ (see below).

Our main interest in the groups $\tatk{F}{i}$ in this paper stems from
the work of Assim and Movahhedi (\cite{am:cap}) who show that for $i\geq 2$ that there is a natural
isomorphism  
\[
\tatk{F}{i}/(\units{F})^p(i-1)\cong \ket{2i-1}{S}{F}/p
\]
(for a generalisation, see Corollary \ref{cor:ket} below).  

Before proceeding, we will introduce some more general `Tate
kernels':

 For each $n\geq 1$, let $\Gamma_n=\gal{F_\infty}{F_n}$ and
let $G_n=\gal{F_n}{F}=\Gamma/\Gamma_n$. Let $\ed{i}{n}$ be the natural
map
\[
\left(F_n^\times/(F_n^\times)^{p^n}(i-1)\right)^{G_n}\to \K(i-1).
\]

We define
\[
\gtatk{F}{i}{n}:=(\ed{i}{n})^{-1}(\dvv{\NN(i-1)^\Gamma}).
\]

Thus if $F=F_n$ (i.e. if $\mu_{p^n}\subset F^\times$) then 
$
\gtatk{F}{i}{n}\subset F^\times/(F^\times)^{p^n}(i-1)
$
and there exists a subgroup $\ntatk{F}{i}{n}$ of $F^\times$ and
containing $(F^\times)^{p^n}$ with the property that 
$\gtatk{F}{i}{n}=(\ntatk{F}{i}{n}/(F^\times)^{p^n})(i-1)$. In
particular, $\ntatk{F}{i}{1}=\tatk{F}{i}$ for all $i\in\Z$. 

We will
prove below (see \ref{cor:ket}) that there is a natural isomorphism 
\[
\gtatk{F}{i}{n}\cong \ket{2i-1}{S}{F}/p^n
\]
when $i\geq 2$ and when either $F=F_n$ or $i\not\equiv 0\pmod{p}$.

Note that when $i=1$, the natural map $F^\times/(F^\times)^{p^n}\to
(F_n^\times/(F_n^\times)^{p^n})^{G_n}$ is an isomorphism. To see this,
one can use the Kummer isomorphism $F^\times/(F^\times)^{p^n}\cong
\coh{1}{F}{\mu_{p^n}}$ and the fact that the restriction map
$\coh{1}{F}{\mu_{p^n}}\to\coh{1}{F_n}{\mu_{p^n}}^{G_n}$ is an
isomorphism since $\coh{1}{G_n}{\mu_{p^n}}=\coh{2}{G_n}{\mu_{p^n}}=0$.
Thus, for all $n\geq 1$ 
$
\gtatk{F}{1}{n}\subset F^\times/(F^\times)^{p^n}
$
and 
if we let 
\[
\ntatk{F}{1}{n}:=\{ a\in \units{F}\ |a\otimes\frac{1}{p^n}\in \dvv{\NN^\Gamma}\}
\]
  then
\[
\gtatk{F}{1}{n}=\ntatk{F}{1}{n}/(F^\times)^{p^n}. 
\]

We let $p^e=p^{e_F}$ be the
exponent of the group $\h{F}$.
For all $n\geq 1$, we will let
$m_n=m_n(F)=\mathrm{max}(0,e_F+n-n_F)$. 
Observe that $m_n=0$ if there is an $e\geq 0$ with
 $p^e\h{F}=0$ and $\mu_{p^{e+n}}\subset F$. 

Observe that $m_n(F_m)=0$ whenever $m$ is suffiently large. Below we
will also let $m_F$ denote $m_1(F)$.

Let $\kappa:\Gamma\to 1+p\Z_p$ be the Teichm\"uller character of $F_\infty/F$ and
observe that, by definition of $m_n=m_n(F)$,
for all $\gamma\in \Gamma$, $\kappa(\gamma)^{p^{m_n}}\equiv
1\pmod{p^{e+n}}$.

The following theorem is essentially due to Greenberg (see
also \cite{am:cap}, Lemma 2.1):

\begin{thm} \label{thm:period}
 For any number field $F$ containing $\mu_p$, 
\[
\dvv{\NN(t)^\Gamma}[p^n]=\left(\dvv{\NN(t')^\Gamma}[p^n]\right)(t-t')
\mbox{ whenever } t\equiv t'\pmod{p^{m_n}}.
\]
\end{thm}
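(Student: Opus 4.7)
The plan is to reduce the assertion about $\dvv{\NN(t)^\Gamma}[p^n]$ to a statement about $\NN(t)[p^{n+e}]^\Gamma$ (where $p^e=p^{e_F}$ is the exponent of $\h{F}$) via multiplication by $p^e$, and then to exploit that these twisted torsion modules are periodic in $t$ modulo $p^{m_n}$ up to a canonical twist.

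First, I refine the argument of the preceding lemma: the proof shows that $\NN(t)^\Gamma=\pdual{\XX(-1-t)_\Gamma}$ fits in a short exact sequence
\[
0\to (\Q_p/\Z_p)^{r_2}\to \NN(t)^\Gamma\to \pdual{\h{F}(-1-t)^\Gamma}\to 0,
\]
where the first term is the divisible subgroup and the last is killed by $p^e$ (since $\h{F}(-1-t)^\Gamma$ is a subgroup of the abelian group $\h{F}$). Hence $\dvv{\NN(t)^\Gamma}\cong (\Q_p/\Z_p)^{r_2}$ equals $p^e\cdot \NN(t)^\Gamma$, from which
\[
\dvv{\NN(t)^\Gamma}[p^n]\;=\;p^e\cdot \NN(t)^\Gamma[p^{n+e}].
\]
Since $\XX\subset\Lambda^{r_2}$ is $\Z_p$-torsion-free, $\NN=\pdual{\XX}$ is divisible, so $\NN(t)^\Gamma[p^{n+e}]=\NN(t)[p^{n+e}]^\Gamma$.

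Next, using the identification $\NN(t)[p^{n+e}]=\NN[p^{n+e}]\otimes \mu_{p^{n+e}}^{\otimes t}$ from the introduction, I observe that if $t\equiv t'\pmod{p^{m_n}}$ then by the defining property of $m_n$, $\kappa(\gamma)^{p^{m_n}}\equiv 1\pmod{p^{n+e}}$ and hence $\kappa(\gamma)^{t-t'}\equiv 1\pmod{p^{n+e}}$ for every $\gamma\in\Gamma$. Therefore $\mu_{p^{n+e}}^{\otimes(t-t')}$ carries trivial $\Gamma$-action. The canonical $\Gamma$-equivariant identification $\mu_{p^{n+e}}^{\otimes t}=\mu_{p^{n+e}}^{\otimes t'}\otimes \mu_{p^{n+e}}^{\otimes(t-t')}$, together with the fact that $\Gamma$-invariants commute with tensoring by a trivial $\Gamma$-module, yields
\[
\NN(t)[p^{n+e}]^\Gamma\;=\;\NN(t')[p^{n+e}]^\Gamma\otimes \mu_{p^{n+e}}^{\otimes(t-t')}.
\]

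Finally, multiplying both sides by $p^e$: the image on the left is $\dvv{\NN(t)^\Gamma}[p^n]$, while on the right $p^e$ acts on the first tensor factor, producing $\dvv{\NN(t')^\Gamma}[p^n]\otimes \mu_{p^{n+e}}^{\otimes(t-t')}$. As $\dvv{\NN(t')^\Gamma}[p^n]$ is $p^n$-torsion, only the $p^n$-reduction of the second factor survives, so this collapses to $\dvv{\NN(t')^\Gamma}[p^n]\otimes \mu_{p^n}^{\otimes(t-t')}=\dvv{\NN(t')^\Gamma}[p^n](t-t')$, yielding the theorem. The main subtlety is the choice of exponent $n+e$: the definition $m_n=\max(0,e_F+n-n_F)$ has the correction $e_F$ built in precisely to guarantee the stronger congruence $\kappa(\gamma)^{p^{m_n}}\equiv 1\pmod{p^{n+e}}$ rather than merely $\pmod{p^n}$, which is exactly what is needed to bridge the $p^e$-gap between $\NN(t)^\Gamma$ and its maximal divisible subgroup via the $p^e$-multiplication.
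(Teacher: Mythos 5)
Your argument is correct, and it reaches the theorem by a genuinely different route than the paper. The paper dualizes: it identifies $\dvv{\NN(t)^\Gamma}[p^n]$ with the Pontryagin dual of $\free{\Z_p}{\XX(-1-t)/T\XX(-1-t)}/p^n=\XX(-1-t)/Y_{-1-t}$, where $Y_i=\XX(i)\cap T\Lambda(i)^{r_2}+p^n\XX(i)$, and then proves $Y_j(i-j)=Y_i$ by an explicit element computation in $\Lambda(i)^{r_2}$ with $T=\gamma_0-1$, using $\kappa(\gamma_0)^{i-j}\equiv 1\pmod{p^{e+n}}$. You stay on the $\NN$ side: from the snake-lemma diagram in the preceding lemma (together with the observation that $\Lambda(i)^{r_2}$ has no $T$-torsion, so the kernel of $\XX(-1-t)_\Gamma\to\Z_p^{r_2}$ is exactly $\h{F}(-1-t)^\Gamma$) you obtain the exact sequence $0\to(\Q_p/\Z_p)^{r_2}\to\NN(t)^\Gamma\to\pdual{\h{F}(-1-t)^\Gamma}\to 0$, whence $\dvv{\NN(t)^\Gamma}=p^e\NN(t)^\Gamma$ and $\dvv{\NN(t)^\Gamma}[p^n]=p^e\left(\NN(t)[p^{n+e}]^\Gamma\right)$; the periodicity then becomes formal, since the congruence makes $\mu_{p^{n+e}}^{\otimes(t-t')}$ a trivial $\Gamma$-module, so twisting commutes with $\Gamma$-invariants at level $p^{n+e}$ and multiplying by $p^e$ collapses everything to the stated equality at level $p^n$. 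Both proofs rest on exactly the same congruence (which is precisely why $e_F$ enters the definition of $m_n$); yours trades the explicit computation in $\Lambda(i)^{r_2}$ for the refined structural statement about $\NN(t)^\Gamma$, which makes the role of the $p^e$-gap transparent and uniform in $t$. Two minor points: the appeal to divisibility of $\NN$ is superfluous, since $\NN(t)^\Gamma[p^{n+e}]$ and $\NN(t)[p^{n+e}]^\Gamma$ are the same subgroup by definition; and the final collapse from $\mu_{p^{n+e}}^{\otimes(t-t')}$ to $\mu_{p^n}^{\otimes(t-t')}$ tacitly uses the compatibility of the reduction maps with the inclusion $\NN(t)[p^n]\subset\NN(t)[p^{n+e}]$, which deserves a sentence but is the same standard identification the paper itself employs.
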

\begin{proof}
The argument we give here is just an adaptation of the proof of
\cite{am:cap}, Lemma 2.1, which treats the case $n=1$ and $m_1=0$. That argument
in turn is just a more explicit version of the sketch given by
Greenberg in \cite{gr:zp}. 

Observe that since $\Gamma$ acts trivially on
$\dvv{\NN(t)^\Gamma}[p^n]$, the assertion simply concerns the equality
of two subgroups of $\NN(t)$. 

As noted above,  $\NN(t+1)$ is Pontryagin-dual to $\XX(-t)$, so that
$\inv{\Gamma}{\NN(t+1)}$ is dual to
$\coinv{\Gamma}{\XX(-t)}=\XX(-t)/T\XX(-t)$, which is a
finitely-generated $\Z_p$-module. 

Hence
$\dvv{\NN(t+1)^\Gamma}$ is dual to 
$\free{\Z_p}{\XX(-t)/T\XX(-t)}$
and, finally,
$\dvv{\NN(t+1)^\Gamma}[p^n]$ is dual to $\free{\Z_p}{\XX(-t)/T\XX(-t)}/p^n$.
Therefore, we must prove that
\[
\free{\Z_p}{\XX(i)/T\XX(i)}/p^n=\free{\Z_p}{\XX(i)/T\XX(j)}/p^n(i-j)
\]
 as
quotient groups of $\XX(i)$ whenever $i\equiv j\pmod{p^{m_n}}$.

Now,  by the result of Iwasawa  mentioned above there is a  short exact sequence of $\Lambda$-modules
 
\[
0\to \XX(i)\to \Lambda(i)^{r_2}\to \h{F}(i)\to 0
\] 
for all $i\in\Z$.

From the commutative exact diagram
\begin{eqnarray*}
\xymatrix{
0\ar[r]
&
\XX(i)\ar[r]\ar[d]^T
&
\Lambda(i)^{r_2}\ar[r]\ar[d]^T
&
\h{F}(i)\ar[r]\ar[d]^T
&
0\\
0\ar[r]
&
\XX(i)\ar[r]
&
\Lambda(i)^{r_2}\ar[r]
&
\h{F}(i)\ar[r]
&
0\\
}
\end{eqnarray*}
we obtain the exact sequence
\begin{eqnarray*}
\xymatrix{
0\ar[r]
& \h{F}(i)[T]\ar[r]^{\delta}
&
\XX(i)/T\XX(i)\ar[r]
&
\Lambda(i)^{r_2}/T\Lambda(i)^{r_2}\ar[r]
&
\h{F}(i)/T\h{F}(i)\ar[r]
&
0.\\
}
\end{eqnarray*}
The image of $\delta$ is the group 
\[
\XX(i)\cap T\Lambda(i)^{r_2}/T\XX(i).
\]
But this is precisely the $\Z_p$-torsion subgroup of $\XX(i)/T\XX(i)$,
since $\Lambda(i)^{r_2}/T\Lambda(i)^{r_2}\cong
(\Lambda/T\Lambda)^{r_2}\cong \Z_p^{r_2}$. Thus 

\[
\free{\Z_p}{\XX(i)/T\XX(i)}/p^n=\XX(i)/Y_i\mbox{ where } Y_i=\XX(i)\cap T\Lambda(i)^{r_2}+p^n\XX(i).
\]

Therefore we must prove that whenever $i\equiv j\pmod{p^{m_n}}$, then
$Y_j(i-j)=Y_i$ as subgroups of $\XX(i)$. Note that
$p^{e+n}\Lambda(i)^{r_2}\subset Y_i$ since $p^e$ annihilates $\h{F}(i)$
and $\XX(i)/Y_i$ is annihilated by $p^n$.

 Fix a topological generator, $\gamma_0$, of
$\Gamma$, so that the action of $\Lambda=\Z_p[[T]]$ on a
$\Gamma$-module is given by $Tm:=(\gamma_0-1)m$.  

Suppose now that $y\in Y_j$ and that $i\equiv j\pmod{p^{m_n}}$. Let
$\zeta\in \Z_p(i-j)$. We must show that $y\otimes\zeta\in Y_i$. 
By definition of $Y_j$, $y=T\lambda+p^nx$ where
$\lambda\in\Lambda(j)^{r_2}$ and $T\lambda,x \in \XX(j)$. Let
$\zeta\in \Z_p(i-j)$. Thus 
\begin{eqnarray*}
(1+T)(\lambda\otimes \zeta) & = & \gamma_0(\lambda\otimes \zeta)\\
                            &=&
                            \kappa(\gamma_0)^{i-j}(\gamma_0\lambda\otimes
                            \zeta)\\
&=&\gamma_0\lambda\otimes\zeta+\left(\kappa(\gamma_0)^{i-j}-1\right)(\gamma_0\lambda\otimes
                            \zeta)\\
&=&\gamma_0\lambda\otimes\zeta+y'\\
&=&(1+T)\lambda\otimes\zeta+y'
\end{eqnarray*}
where $y'\in p^{e+n}\Lambda(i)^{r_2}\subset Y_i$. Thus
\begin{eqnarray*}
T(\lambda\otimes\zeta)=T\lambda\otimes\zeta+y'
\end{eqnarray*}
and hence
\begin{eqnarray*}
y\otimes\zeta&=& T\lambda\otimes\zeta+p^n(x\otimes\zeta)\\
&=& T(\lambda\otimes\zeta)+p^n(x\otimes\zeta)-y'
\end{eqnarray*}
which belongs to $Y_i$ since $y'\in Y_i$ and $y\otimes\zeta, y',
p^n(x\otimes\zeta)\in \XX(i) \imp T(\lambda\otimes \zeta)\in \XX(i)$. 

Thus, $Y_j(i-j)\subset Y_i$ and similarly $Y_i\subset Y_j(i-j)$,
proving the result.
\end{proof}

\begin{cor}\label{cor:period}
$\gtatk{F}{i}{n}(j-i)=\gtatk{F}{j}{n}$ whenever $i\equiv j\pmod{p^{m_n}}$.
\end{cor}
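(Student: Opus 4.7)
The plan is to deduce the corollary directly from Theorem~\ref{thm:period}, by exploiting that $\gtatk{F}{i}{n}$ consists of $p^n$-torsion elements and that the maps $\ed{i}{n}$ assemble into a family that is functorial under Tate twisting.

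First, I would observe that the domain $\bigl(F_n^\times/(F_n^\times)^{p^n}(i-1)\bigr)^{G_n}$ of $\ed{i}{n}$ is annihilated by $p^n$, so the image of $\ed{i}{n}$ lies in $\K(i-1)[p^n]$. Consequently the defining condition $\ed{i}{n}(-)\in\dvv{\NN(i-1)^\Gamma}$ is equivalent to $\ed{i}{n}(-)\in\dvv{\NN(i-1)^\Gamma}[p^n]$, i.e.
\[
\gtatk{F}{i}{n}=(\ed{i}{n})^{-1}\bigl(\dvv{\NN(i-1)^\Gamma}[p^n]\bigr).
\]

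Next I would set up the twist. Applying the functor $-\otimes_{\Z_p}\Z_p(j-i)$ to the construction of $\ed{i}{n}$ (which, after all, is itself obtained from the natural map $F_n^\times/(F_n^\times)^{p^n}\to\K$ by tensoring with $\mu_{p^n}^{\otimes(i-1)}$ and taking $G_n$-invariants) produces a commutative square
\[
\xymatrix{
\bigl(F_n^\times/(F_n^\times)^{p^n}(i-1)\bigr)^{G_n}\ar[r]^-{\ed{i}{n}}\ar[d]_-{(j-i)}
& \K(i-1)[p^n]\ar[d]^-{(j-i)}\\
\bigl(F_n^\times/(F_n^\times)^{p^n}(j-1)\bigr)^{G_n}\ar[r]^-{\ed{j}{n}}
& \K(j-1)[p^n]
}
\]
in which both vertical maps are the $(j-i)$-fold Tate twist. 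Since the modules involved are $p^n$-torsion, these twists are bijective (using the remark in the preamble that $M(t)\cong M\otimes\mu_{p^n}^{\otimes t}$ when $M=M[p^n]$), and the left-hand vertical map is precisely the isomorphism whose image defines $\gtatk{F}{i}{n}(j-i)$ as a subgroup of the lower-left group.

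Finally, I would invoke Theorem~\ref{thm:period} with $t=i-1$ and $t'=j-1$, which is legitimate because $t-t'=i-j\equiv 0\pmod{p^{m_n}}$. The theorem says precisely that the right-hand vertical map carries $\dvv{\NN(i-1)^\Gamma}[p^n]$ onto $\dvv{\NN(j-1)^\Gamma}[p^n]$. Chasing preimages through the commutative square then yields
\[
\gtatk{F}{i}{n}(j-i)=\gtatk{F}{j}{n},
\]
as claimed. The genuine work has already been done in Theorem~\ref{thm:period}; the only extra content is the functoriality of $\ed{i}{n}$ in $i$, and this is immediate from its definition, so I would not expect any real obstacle.
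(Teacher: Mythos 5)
Your route is the same as the paper's: reduce to $p^n$-torsion, use the compatibility of $\ed{i}{n}$ with Tate twisting, and feed in Theorem~\ref{thm:period} with $t=i-1$, $t'=j-1$. The reduction $\gtatk{F}{i}{n}=(\ed{i}{n})^{-1}\bigl(\dvv{\NN(i-1)^\Gamma}[p^n]\bigr)$ and the commutativity of the square (before invariants) are fine. However, there is one genuine gap, and it is precisely the point with which the paper's proof opens. You assert that the left-hand vertical arrow of your square is ``the $(j-i)$-fold Tate twist'' carrying $\left(\units{F_n}/(\units{F_n})^{p^n}(i-1)\right)^{G_n}$ bijectively onto $\left(\units{F_n}/(\units{F_n})^{p^n}(j-1)\right)^{G_n}$. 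But twisting does not in general commute with taking $G_n$-invariants: if $x$ is a $G_n$-invariant element of order $p^n$ and $\zeta$ generates $\mu_{p^n}^{\otimes(j-i)}$, then $g(x\otimes\zeta)=\chi(g)^{j-i}\,(x\otimes\zeta)$, where $\chi$ is the mod $p^n$ cyclotomic character, so $x\otimes\zeta$ need not be invariant. In particular, as written it is not even clear that $\gtatk{F}{i}{n}(j-i)$ lies in the domain of $\ed{j}{n}$, which is needed for the asserted equality of subgroups to make sense; ``functoriality of $\ed{i}{n}$ in $i$'' alone does not supply this.

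The repair is short but uses the hypothesis $i\equiv j\pmod{p^{m_n}}$ a \emph{second} time, independently of Theorem~\ref{thm:period}: since $m_n=\mathrm{max}(0,e_F+n-n_F)\geq n-n_F$ and $\chi$ embeds $G_n$ into the subgroup $1+p^{n_F}\Z/p^n\Z$ of $(\Z/p^n\Z)^\times$, whose exponent is $p^{n-n_F}=|G_n|$, the divisibility $p^{m_n}\mid (j-i)$ forces $\chi^{j-i}$ to be trivial on $G_n$; that is, $G_n$ acts trivially on $\mu_{p^n}^{\otimes(j-i)}$. With this observation the twist does identify the two invariant groups, your square is legitimate, and the rest of your argument coincides with the paper's proof.
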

\begin{proof}
Note first that the conditions on $i$ and $j$ guarantee that $G_n$
acts trivially on $\mu_{p^n}^{\otimes(j-i)}$ since $p^{m_n}\geq
p^{n-n_F}=|G_n|$. The result now follows from the commutative diagram
\begin{eqnarray*}
\xymatrix{
\left(\frac{\units{F_n}}{(\units{F_n})^{p^n}}(i-1)\right)^{G_n}(j-i)
\ar[r]^{\ed{i}{n}(j-i)}\ar[d]^{=}
&
\dvv{\NN(i-1)^\Gamma}[p^n](j-i)\ar[d]^{=}\\
\left(\frac{\units{F_n}}{(\units{F_n})^{p^n}}(j-1)\right)^{G_n}\ar[r]^{\ed{j}{n}}
&
\dvv{\NN(j-1)^\Gamma}[p^n]
}
\end{eqnarray*}
\end{proof}
\begin{cor} If $p^e\h{F}=0$ and $\mu_{p^{e+n}}\subset F$, 
then $\gtatk{F}{i}{n}(j-i)=\gtatk{F}{j}{n}$ for all 
$i,j\in\Z$. 
\end{cor}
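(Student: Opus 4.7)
The plan is to deduce this immediately from the preceding Corollary~\ref{cor:period} by checking that the hypotheses force $m_n(F)=0$, so that the congruence condition $i\equiv j\pmod{p^{m_n}}$ becomes vacuous.

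Recall from the text that $p^{e_F}$ is defined as the exponent of $\h{F}$, $n_F$ satisfies $\proots{F}=\mu_{p^{n_F}}$, and $m_n=m_n(F)=\max(0,e_F+n-n_F)$. First I would observe that the hypothesis $p^e\h{F}=0$ forces $e_F\leq e$, while $\mu_{p^{e+n}}\subset F$ forces $n_F\geq e+n$. Combining these,
\[
e_F+n-n_F\;\leq\;e+n-(e+n)\;=\;0,
\]
so $m_n=\max(0,e_F+n-n_F)=0$, and hence $p^{m_n}=1$.

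Therefore any two integers $i,j\in\Z$ trivially satisfy $i\equiv j\pmod{p^{m_n}}$. Applying Corollary~\ref{cor:period} to this pair yields $\gtatk{F}{i}{n}(j-i)=\gtatk{F}{j}{n}$, as required.

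There is no real obstacle here: the corollary is a bookkeeping consequence of the definition of $m_n$ together with the periodicity already established in Corollary~\ref{cor:period}. The only point deserving care is making sure the inequalities $e_F\leq e$ and $n_F\geq e+n$ are read off correctly from the hypotheses.
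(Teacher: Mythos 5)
Your proposal is correct and is exactly the paper's intended argument: the text already records (right before Theorem \ref{thm:period}) that $m_n=0$ when $p^e\h{F}=0$ and $\mu_{p^{e+n}}\subset F$, and the corollary then follows from Corollary \ref{cor:period} since the congruence $i\equiv j\pmod{p^{m_n}}$ is vacuous. Your reading of the inequalities $e_F\leq e$ and $n_F\geq e+n$ from the definitions of $e_F$ and $n_F$ is the only content needed, and you have it right.
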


In section \ref{sec:coh} below we will see that if $p$ does not divide
$i$ or if $\mu_{p^n}\subset F$
there is a short exact sequence
\[
0\to \mu\to\gtatk{F}{i}{n}\to \dvv{\NN(i-1)^\Gamma}[p^n]\to 0.
\] 
where $\mu=\proots{F_i}^{\otimes i}/p^n$ is a nontrivial cyclic group
of order dividing $p^n$.

In particular, $\dim{\F{p}}{\tatk{F}{i}/(\units{F})^p}=1+r_2$ for all $i\in\Z$.

We now consider the structure of $\gtatk{F}{1}{n}$. 

For general number fields, the identification of $\gtatk{F}{1}{n}$ is
related to the Gross  conjecture (as extended by Jaulent). 

Let $U^S_F$ be the group of $S$-units of $F$. The homomorphism 
\[
G_F: U^S_F\to \coprod_{v|p}\Z_p\cdot v,\ \epsilon\mapsto
\sum_{v|p}\log_p[\norm{F_v}{\Q_p}{\epsilon}]\cdot v
\]
extends linearly to a $\Z_p$-module homomorphism 
\[
g_F: U^S_F\otimes \Z_p\to
\coprod_{v|p}\Z_p\cdot v.
\]
The image of $g_F$ is free $\Z_p$-module of rank at most
$|S_p(F)|-1$. Thus the rank of $\image{g_F}$ is $|S_p(F)|-1-\delta_F$
for some $\delta_F\geq 0$. Since the $\Z_p$-rank of $U^S_F\otimes
\Z_p$ is $r_2+|S_p(F)|-1$, it follows that $\ker{g_F}$ has rank
$r_2+\delta_F$. For more details see Sinnott, \cite{gross:sinnott} and
Kuzmin, \cite{kuz:tate}.

The conjecture of Gross (as extended by Jaulent) is:
\[
\delta_F=0.
\]

Jaulent has shown that this conjecture holds for all abelian fields.
Kolster shows that if $\delta_E=0$, then $\delta_F=0$ for all
subfields $F$ of $E$.

Following Jaulent, we will denote the Gross kernel $\ker{g_F}$ by $\gk{F}$. It is
also called the group of \emph{logarithmic units} of $F$.


Clearly, if the field $F$ has only one $p$-adic prime, then it is
trivially true that $\delta_F=0$ . Thus, in this case, $g_F$ is the zero
map and  $\gk{F}=U^S_F\otimes \Z_p$. 

In the next lemma and in the final section of this paper we  will need
the following result of Jaulent concerning logarithmic class groups:

We will let $\logcls{F}$ denote the logarithmic class group of
$F$. (For the logarithmic class group, see Jaulent, \cite{jaulent:sauv}
and \cite{jaulent:log}.) Thus there is an exact sequence 
\[
0\to \gk{F}\to \rk{F}\to \dl{F}\to \logcls{F}\to 0
\]
where $\rk{F}=\units{F}\otimes\Z_p$ and $\dl{F}$ is the \emph{group of
  logarithmic divisors of degree $0$}.

For a Galois extension $E/F$ with Galois group $G$, we let $f_1$ be
the natural functorial homomorphism $\logcls{F}\to (\logcls{E})^G$. 

\begin{thm}\label{thm:jaul}
Let $E/F$ be a finite layer of the cyclotomic $\Z_p$-extension of
$F$ and let $G=\gal{E}{F}$. Then 
\[
\ker{f_1}\cong  \coh{1}{G}{\gk{E}}\mbox{ and }\coker{f_1}\cong \coh{2}{G}{\gk{E}}
\]
If the Gross conjecture holds for $(F,p)$ these are finite groups of
the same order.
\end{thm}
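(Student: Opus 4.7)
The plan is to take $G$-cohomology of Jaulent's fundamental four-term exact sequence
\[
0\to \gk{E}\to \rk{E}\to \dl{E}\to \logcls{E}\to 0
\]
and extract $\ker{f_1}$ and $\coker{f_1}$ from the resulting long exact sequences. First I would split this into $0\to\gk{E}\to\rk{E}\to B\to 0$ and $0\to B\to\dl{E}\to\logcls{E}\to 0$ with $B=\image{\rk{E}\to\dl{E}}$, and record the formal identifications $\gk{E}^G=\gk{F}$, $\rk{E}^G=\rk{F}$ and $\dl{E}^G=\dl{F}$; the last of these uses the fact that a finite layer of the cyclotomic $\Z_p$-extension is unramified outside~$p$ and totally ramified at each $p$-adic prime, so the Galois action on the set of primes of $E$ is transparent and the logarithmic valuations transport correctly.

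Next I would establish two auxiliary vanishings. Hilbert 90 gives $\coh{1}{G}{\units{E}}=0$, and since cohomology of a finite group commutes with $\otimes_\Z\Z_p$ we obtain $\coh{1}{G}{\rk{E}}=0$. The module $\dl{E}$ decomposes (modulo the degree-zero condition) as a direct sum of local contributions indexed by primes of~$E$; primes above $\ell\ne p$ form permutation $G$-modules with vanishing higher cohomology, while $p$-adic primes, being totally ramified, contribute $\Z_p$ with trivial $G$-action, whose $H^1$ vanishes since $\Z_p$ has no $p$-power torsion. The upshot is $\coh{1}{G}{\dl{E}}=0$.

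A diagram chase on the two long exact sequences now delivers the isomorphisms. From the first, with $\coh{1}{G}{\rk{E}}=0$, one has $\coh{1}{G}{\gk{E}}\cong B^G/\image{\rk{F}\to B^G}$, and a direct description identifies this cokernel as $\ker{f_1}$, namely classes of logarithmic divisors in $\dl{F}$ which become principal in $\rk{E}$ modulo those already principal in $\rk{F}$. From the second, using $\dl{E}^G=\dl{F}$ and $\coh{1}{G}{\dl{E}}=0$, one gets $\coh{1}{G}{B}\cong \logcls{E}^G/\image{\dl{F}\to\logcls{E}^G}=\coker{f_1}$, and the connecting morphism $\coh{1}{G}{B}\to\coh{2}{G}{\gk{E}}$ (injective thanks to $\coh{1}{G}{\rk{E}}=0$) becomes an isomorphism once one checks $\ker{\coh{2}{G}{\rk{E}}\to\coh{2}{G}{\dl{E}}}=0$; this injectivity is a local-global/Hasse-norm statement which, in the cyclotomic $\Z_p$-extension, reduces to the vanishing of local obstructions at $p$-adic primes, where $E/F$ is totally ramified.

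For the final assertion, under the Gross conjecture for $(F,p)$ the group $\logcls{F}$ is finite, and a descent argument through the cyclotomic tower (using Kolster's comparison) gives finiteness of $\logcls{E}$ as well, so $\gk{E}$ has minimal $\Z_p$-rank $r_2(E)$. The cohomology groups $\coh{i}{G}{\gk{E}}$ are then finite ($\gk{E}$ is a finitely generated $\Z_p$-module and $G$ is finite), and the Herbrand quotient
\[
h(G,\gk{E})=\frac{h(G,\rk{E})\,h(G,\logcls{E})}{h(G,\dl{E})}
\]
equals $1$ because $\logcls{E}$ is finite and the local contributions at ramified primes cancel between $h(G,\rk{E})$ and $h(G,\dl{E})$, yielding $|\ker{f_1}|=|\coker{f_1}|$. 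The main obstacle throughout is the careful analysis at $p$-adic primes, where the logarithmic valuation differs from the usual one and the degree-zero condition cutting $\dl{E}$ out of the full group of logarithmic divisors must be tracked precisely; this is where the specific totally-ramified structure of the cyclotomic $\Z_p$-extension (as opposed to an arbitrary cyclic $p$-extension) is essential, both for the injectivity $\coh{2}{G}{\rk{E}}\hookrightarrow\coh{2}{G}{\dl{E}}$ and for the local Herbrand computation.
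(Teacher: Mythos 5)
Your plan -- take $G$-cohomology of Jaulent's four-term sequence and read off $\ker{f_1}$ and $\coker{f_1}$ -- is the natural way to reprove what the paper simply cites (Jaulent \cite{jaulent:log}, Th\'eor\`eme 4.5 and Corollaire 3.7(i)), and your derivation of $\ker{f_1}\cong\coh{1}{G}{\gk{E}}$ is broadly sound. But the two facts that carry it, $\dl{E}^G=\dl{F}$ and $\coh{1}{G}{\dl{E}}=0$, are not established by the reasons you give, and those reasons would apply verbatim to any cyclic $p$-extension totally ramified at the $p$-adic primes and unramified elsewhere, where the conclusion of the theorem fails in general (Jaulent's genus formula then carries nontrivial local terms). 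Concretely: (i) $\dl{E}^G=\dl{F}$ needs the fact that a cyclotomic layer is \emph{logarithmically} unramified at every place, $\tilde e_{\mathfrak P/\mathfrak p}=1$, including at the classically totally ramified $p$-adic places; classical total ramification by itself produces an index-$e_v$ defect in divisor invariants (this is exactly the source of the $\oplus\,\Z/e_v$ terms in Chevalley's formula used in the proof of Theorem \ref{thm:coker}), so ``the logarithmic valuations transport correctly'' is precisely the nontrivial input, not a consequence of the ramification data you cite. (ii) The permutation-module argument computes the cohomology of the \emph{full} logarithmic divisor group only; for the degree-zero subgroup one finds that $\coh{1}{G}{\dl{E}}$ is the image of $\deg_E$ modulo its values on $G$-invariant divisors, a group which for a general cyclic $p$-extension has order up to $[E:F]$. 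Its vanishing for a cyclotomic layer reflects that the image of the logarithmic degree shrinks by exactly $[E:F]$ when one climbs a layer (equivalently, that $E$ and $F$ have the same cyclotomic $\Z_p$-extension); you neither state nor prove this, and nothing in your argument distinguishes the cyclotomic case.

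The same problem recurs at the surjectivity of $\coh{1}{G}{B}\to\coh{2}{G}{\gk{E}}$: what is needed is that $\coh{2}{G}{\gk{E}}\to\coh{2}{G}{\rk{E}}$ vanishes, i.e.\ that every logarithmic unit of $F$ is (up to $p$-adic completion) a norm from $E$. This does follow from Hasse's norm theorem, but the local input at $p$ is that the local logarithmic units are by construction the universal norms from the local cyclotomic tower -- not that ``$E/F$ is totally ramified'', since ramified places are classically exactly where local norm obstructions live; and your stronger claim, injectivity of $\coh{2}{G}{\rk{E}}\to\coh{2}{G}{\dl{E}}$, is more than is needed and is itself dubious. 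Finally, the Herbrand-quotient computation is not legitimate as written: $\rk{E}$ and $\dl{E}$ have infinite Tate cohomology, so their Herbrand quotients are undefined and there is nothing to ``cancel''; and finiteness of the groups $\coh{i}{G}{\gk{E}}$ requires no upward transfer of the Gross conjecture (Kolster's result cited in the paper goes from $E$ down to $F$, not up) -- it is automatic because $\gk{E}$ is a finitely generated $\Z_p$-module. The equality of orders is the statement that the Herbrand quotient of $\gk{E}$ is $1$, which under the Gross conjecture comes from identifying $\Q_p\otimes\gk{E}$ as a free $\Q_p[G]$-module (induced from the archimedean places, $F$ being totally imaginary); that identification is essentially the content of Jaulent's Corollaire 3.7(i), which, together with his genus formula, is all the paper invokes.
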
 
\begin{proof}
This follows immediately by applying Jaulent's genus formula for the logarithmic
class group (\cite{jaulent:log}, Theor\'eme 4.5) to the special case of a
$p$-cyclotomic extension, and using also Corollaire 3.7 (i) of the
same paper.
\end{proof}

The following lemma is an analogue for the Gross kernel of Iwasawa, \cite{iwa:zp}, Lemma 7
(which concerns the $S$-units). 

\begin{lem}\label{lem:gross}
Let $\gk{F_\infty}=\colim_n\gk{F_n}$. Then there is a short exact
sequence 
\[
0\to \gk{F}\otimes_{\Z_p}\Q_p/\Z_p\to
\left(\gk{F_\infty}\otimes_{\Z_p}\Q_p/\Z_p\right)^\Gamma\to
\coh{1}{\Gamma}{\gk{F_\infty}}\to 0
\]
where $\coh{1}{\Gamma}{\gk{F_\infty}}$ is finite.
\end{lem}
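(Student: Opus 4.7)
The plan is to adapt Iwasawa's proof of the analogous statement for $S$-units (\cite{iwa:zp}, Lemma 7) to the Gross kernel. The argument has three pieces: a multiplication-by-$p^n$ computation producing a candidate exact sequence, a Galois-descent identification $\gk{F_\infty}^\Gamma = \gk{F}$, and a finiteness statement for $\coh{1}{\Gamma}{\gk{F_\infty}}$.

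For the first piece, I would apply $\Gamma$-cohomology to
\[
0 \to \gk{F_\infty}\xrightarrow{p^n}\gk{F_\infty}\to \gk{F_\infty}/p^n \to 0
\]
to obtain, for each $n$,
\[
0 \to \gk{F_\infty}^\Gamma/p^n \to (\gk{F_\infty}/p^n)^\Gamma \to \coh{1}{\Gamma}{\gk{F_\infty}}[p^n] \to 0,
\]
and pass to the colimit over $n$ (invariants commute with this specific filtered colimit since any $\Gamma$-fixed element of $\gk{F_\infty}\otimes\Q_p/\Z_p$ has a lift to some $(\gk{F_\infty}/p^n)^\Gamma$, and the torsion obstruction to injectivity is controlled by the divisibility of $\mu_{p^\infty}\subset \gk{F_\infty}$). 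For the second piece, $\gk{F_\infty}^\Gamma = \bigcup_n \gk{F_n}^{G_n}$ because $\Gamma_n$ acts trivially on $\gk{F_n}$. Classical Galois descent together with flatness of $\Z_p$ gives $(U^S_{F_n}\otimes\Z_p)^{G_n} = U^S_F\otimes\Z_p$, and for $\epsilon\in U^S_F$ the $w$-component of $g_{F_n}(\epsilon)$ for $w\mid v$ is the nonzero multiple $[F_{n,w}:F_v]\log_p N_{F_v/\Q_p}(\epsilon)$, so $g_{F_n}(\epsilon)=0$ if and only if $g_F(\epsilon)=0$. Hence $\gk{F_n}^{G_n}=\gk{F}$ for every $n$, and combining the two pieces yields
\[
0 \to \gk{F}\otimes\Q_p/\Z_p \to (\gk{F_\infty}\otimes\Q_p/\Z_p)^\Gamma \to \bigcup_n \coh{1}{\Gamma}{\gk{F_\infty}}[p^n]\to 0.
\]

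The third piece is the delicate one: establishing that $\coh{1}{\Gamma}{\gk{F_\infty}}$ is finite; once this is known it is automatically $p$-primary and equals $\bigcup_n \coh{1}{\Gamma}{\gk{F_\infty}}[p^n]$, matching the stated sequence. The plan is to embed $\gk{F_\infty}$ in
\[
0 \to \gk{F_\infty} \to U^S_{F_\infty}\otimes\Z_p \to \image{g_{F_\infty}} \to 0
\]
and use the resulting long exact cohomology sequence together with Iwasawa's own Lemma 7 applied to $U^S$, which gives $\coh{1}{\Gamma}{U^S_{F_\infty}\otimes\Z_p}$ finite. Since the primes of $F_\infty$ above $p$ are eventually totally ramified in the cyclotomic tower, $\Gamma$ acts trivially on $\image{g_{F_\infty}}$, so the finiteness of $\coh{1}{\Gamma}{\gk{F_\infty}}$ reduces to the finiteness of the cokernel of $U^S_F\otimes\Z_p \to \image{g_{F_\infty}}$. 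This cokernel is the main obstacle: proving it is finite demands a careful bookkeeping of the ramification indices $[F_{n,w}:F_v]$ governing the transition maps in $\image{g_{F_\infty}} = \colim_n \image{g_{F_n}}$, using the eventual total ramification of the $p$-primes to reduce to a computation at a finite layer.
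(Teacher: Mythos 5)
Your first two pieces are essentially sound and close to the paper's own argument: the identification $\gk{F_n}^{G_n}=\gk{F}$ is proved in the paper by exactly your computation, phrased as a commutative diagram whose right-hand vertical map is $\Delta:(a_v)_v\mapsto([F_{n,w}:F_v]a_v)_w$. One caveat: the displayed sequence $0\to\gk{F_\infty}\xrightarrow{p^n}\gk{F_\infty}\to\gk{F_\infty}/p^n\to 0$ is not exact, because $\proots{F_\infty}=\mu_{p^\infty}\subset\gk{F_\infty}$ gives multiplication by $p^n$ the kernel $\mu_{p^n}$ (whose $\Gamma$-cohomology is nonzero at finite level), and your parenthetical remark about "the torsion obstruction" does not actually dispose of this. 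The paper sidesteps it by first passing to $\gkk{F}=\gk{F}/\proots{F}$, which is $\Z_p$-free, tensoring $0\to\Z_p\to\Q_p\to\Q_p/\Z_p\to 0$ with $\gkk{F_\infty}$, and then returning to $\gk{}$ using $\coh{1}{\Gamma}{\mu_{p^\infty}}=\coh{2}{\Gamma}{\mu_{p^\infty}}=0$; you need some such device before the Bockstein/colimit argument is legitimate.

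The genuine gap is the third piece, which you yourself leave open. In your long exact sequence for $0\to\gk{F_\infty}\to U^S_{F_\infty}\otimes\Z_p\to\image{g_{F_\infty}}\to 0$, the group $\coker{U^S_F\otimes\Z_p\to(\image{g_{F_\infty}})^\Gamma}$ injects into $\coh{1}{\Gamma}{\gk{F_\infty}}$, so its finiteness is not a technical reduction but essentially the statement to be proved; and it is not obtainable by bookkeeping ramification indices. It amounts to showing unconditionally that the $\Z_p$-rank of $(\image{g_{F_\infty}})^\Gamma$ is $|S_p(F)|-1-\delta_F$, i.e. that the Gross defect and the image of $g$ behave coherently under descent in the cyclotomic tower — a global arithmetic fact, not a local one. (Also, $\Gamma$ need not act trivially on $\image{g_{F_\infty}}$: $p$-adic primes may split in finitely many layers before becoming totally ramified, so the action is only through a finite quotient.) The paper imports precisely this arithmetic input from Jaulent's genus formula for the logarithmic class group: by Theorem \ref{thm:jaul}, $\coh{1}{G_n}{\gk{F_n}}\cong\ker{\logcls{F}\to\logcls{F_n}}$ for every $n$, so $\coh{1}{\Gamma}{\gk{F_\infty}}=\colim_n\coh{1}{G_n}{\gk{F_n}}$ embeds in the finitely generated $\Z_p$-module $\logcls{F}$, and being a torsion $\Z_p$-module it is finite. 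Without this (or an equivalent input such as Kolster-type results on universal norms and the Gross defect), your argument does not close; Iwasawa's Lemma 7 for $S$-units controls $\coh{1}{\Gamma}{U^S_{F_\infty}\otimes\Z_p}$ at best, not the cokernel you single out as the main obstacle.
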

\begin{proof}
We begin by observing that $(\gk{F_n})^\Gamma=\gk{F}$ for all
$n$. This can be seen by taking $\Gamma$-invariants of the commutative
diagram with exact rows
\begin{eqnarray*}
\xymatrix{
0\ar[r]
&
\gk{F}\ar[r]\ar[d]
&
U^S_F\otimes \Z_p\ar[r]^{g_F}\ar[d]
&
\oplus_{v|p}\Z_p\ar[d]^{\Delta}
\\
0\ar[r]
&
\gk{F_n}\ar[r]
&
U^S_{F_n}\otimes \Z_p\ar[r]^{g_{F_n}}
&
\oplus_{v|p}\left(\oplus_{w|v}\Z_p\right)\\
}
\end{eqnarray*}
(where $\Delta$ is the map $(a_v)_v\mapsto (([F_{n,w}:F_v]a_v)_w)_v$)
and using the fact that
$(U^S_{F_n}\otimes\Z_p)^\Gamma=U^S_F\otimes\Z_p$ and that the vertical
arrows are all injective.

It follows, on taking limits, that $(\gk{F_\infty})^\Gamma=\gk{F}$.

Note that the torsion submodule of $\gk{F}$ is
$\proots{F}=\proots{F}\otimes\Z_p$. Let $\gkk{F}=\gk{F}/\proots{F}$.
Then $\gkk{F}$ is a free $\Z_p$-module and hence
$\gkk{F_\infty}=\colim_n(\gkk{F_n})$ is $\Z_p$-flat (in fact, it is
easy to verify that it is a \emph{free} $\Z_p$-module).

Now apply the exact functor $\gkk{F_\infty}\otimes_{\Z_p}$ to the
short exact sequence 
\[
0\to \Z_p\to \Q_p\to \Q_p/\Z_p\to 0
\]
to get the short exact sequence of $\Gamma$-modules
\[
0\to\gkk{F_\infty}\to \gkk{F_\infty}\otimes_{\Z_p}\Q_p\to
\gkk{F_\infty}\otimes_{\Z_p}\Q_p/\Z_p\to 0.
\]
Taking $\Gamma$-invariants gives the long exact sequence
\begin{eqnarray*}
0\to \gkk{F}\to \gkk{F}\otimes_{\Z_p} \Q_p\to
\left(\gkk{F_\infty}\otimes_{\Z_p}\Q_p/\Z_p\right)^\Gamma
\to \coh{1}{\Gamma}{\gkk{F_\infty}}\to \coh{1}{\Gamma}{\gkk{F_\infty}\otimes_{\Z_p}\Q_p}\cdots
\end{eqnarray*}

But  $\coh{1}{\Gamma}{\gkk{F_\infty}\otimes_{\Z_p}\Q_p}=0$ since
$(\gkk{F_\infty}\otimes_{\Z_p}\Q_p)^{\Gamma_n}=\gkk{F_n}\otimes_{\Z_p}\Q_p$
and $\coh{1}{G_n}{\gkk{F_n}\otimes_{\Z_p}\Q_p}=0$ for all $n$ (where
$\Gamma_n=\gal{F_\infty}{F_n}$ and $G_n=\Gamma/\Gamma_n$).

Now observing  that
$\gkk{F_n}\otimes_{\Z_p}\Q_p/\Z_p=\gk{F_n}\otimes_{\Z_p}\Q_p/\Z_p$ for
all $n$ and 
that $\coh{1}{\Gamma}{\gkk{F_\infty}}=\coh{1}{\Gamma}{\gk{F_\infty}}$ 
(since
$\coh{1}{\Gamma}{\mu_{p^\infty}}=\coh{2}{\Gamma}{\mu_{p^\infty}}=0$)
gives the short exact sequence we require.

Finally, we must show that $\coh{1}{\Gamma}{\gk{F_\infty}}$ is
finite. Of course,
$\coh{1}{\Gamma}{\gk{F_\infty}}
=\colim_n\coh{1}{G_n}{\gk{F_n}}$ since
$\gk{F_n}=(\gk{F_\infty})^\Gamma$. However, by Theorem \ref{thm:jaul}
\[
\coh{1}{G_n}{\gk{F_n}}=\ker{\logcls{F}\to\logcls{F_n}}.
\]
 Thus
$\coh{1}{\Gamma}{\gk{F_\infty}}\subset \logcls{F}$. 
Since $\coh{1}{\Gamma}{\gk{F_\infty}}$ is a torsion $\Z_p$-module and
$\logcls{F}$ is a finitely-generated $\Z_p$-module, the result follows. 
\end{proof} 

As an immediate corollary, we have:

\begin{cor}
\[
\dvv{\left(\gk{F_\infty}\otimes_{\Z_p}\Q_p/\Z_p\right)^\Gamma}=\gk{F}\otimes_{\Z_p}\Q_p/\Z_p.
\]
\end{cor}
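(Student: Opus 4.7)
The plan is to deduce the statement directly from Lemma \ref{lem:gross} by combining two elementary facts about divisible abelian groups with the two pieces of data that the lemma supplies, namely the injection $\gk{F}\otimes_{\Z_p}\Q_p/\Z_p \hookrightarrow (\gk{F_\infty}\otimes_{\Z_p}\Q_p/\Z_p)^\Gamma$ and the finiteness of its cokernel $\coh{1}{\Gamma}{\gk{F_\infty}}$.

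First I would observe that $\gk{F}\otimes_{\Z_p}\Q_p/\Z_p$ is itself a divisible abelian group: tensoring any abelian group with the divisible group $\Q_p/\Z_p$ yields a divisible group, since for $a\otimes d$ and any positive integer $n$ we may write $d=nd'$ in $\Q_p/\Z_p$, giving $a\otimes d=n(a\otimes d')$. Consequently, the image of $\gk{F}\otimes_{\Z_p}\Q_p/\Z_p$ inside $(\gk{F_\infty}\otimes_{\Z_p}\Q_p/\Z_p)^\Gamma$ is a divisible subgroup, hence is contained in the maximal divisible subgroup $\dvv{(\gk{F_\infty}\otimes_{\Z_p}\Q_p/\Z_p)^\Gamma}$.

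For the reverse inclusion I would use the finiteness assertion of Lemma \ref{lem:gross}. A finite abelian group has no nonzero divisible subgroup, so the image of $\dvv{(\gk{F_\infty}\otimes_{\Z_p}\Q_p/\Z_p)^\Gamma}$ in the finite quotient $\coh{1}{\Gamma}{\gk{F_\infty}}$ must vanish. By exactness of the sequence in the lemma, this forces $\dvv{(\gk{F_\infty}\otimes_{\Z_p}\Q_p/\Z_p)^\Gamma}$ to lie inside $\gk{F}\otimes_{\Z_p}\Q_p/\Z_p$, completing the equality of the two subgroups.

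There is no real obstacle here — all of the substantive work was carried out in proving Lemma \ref{lem:gross}, and in particular in establishing that $\coh{1}{\Gamma}{\gk{F_\infty}}$ is finite via Theorem \ref{thm:jaul}. The corollary itself is a one-line divisibility argument of the standard form ``divisible subgroup plus finite quotient forces equality with the maximal divisible subgroup''.
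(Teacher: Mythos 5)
Your argument is correct and is exactly what the paper intends: it states this as an immediate corollary of Lemma \ref{lem:gross}, relying on the same observation that $\gk{F}\otimes_{\Z_p}\Q_p/\Z_p$ is divisible while the finite quotient $\coh{1}{\Gamma}{\gk{F_\infty}}$ admits no nontrivial divisible subgroup. Nothing further is needed.
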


\begin{thm}\label{thm:nn}
\[
\dvv{\inv{\Gamma}{\NN}}\subset \gk{F}\otimes_{\Z_p}\Q_p/\Z_p\mbox{ with
  equality if and only if }\delta_F=0.
\]
\end{thm}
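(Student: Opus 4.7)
The strategy is to reduce everything to Lemma \ref{lem:gross} via the single crucial inclusion
\[
\NN \ \subset\ \gk{F_\infty}\otimes_{\Z_p}\Q_p/\Z_p
\]
of subgroups of $\K$.  Granting this, I would take $\Gamma$-invariants and pass to maximal divisible subgroups to obtain
\[
\dvv{\NN^\Gamma} \ \subset\ \dvv{\bigl(\gk{F_\infty}\otimes_{\Z_p}\Q_p/\Z_p\bigr)^\Gamma} \ =\ \gk{F}\otimes_{\Z_p}\Q_p/\Z_p,
\]
where the last equality is precisely the corollary to Lemma \ref{lem:gross} stated just above.  That establishes the inclusion part of the theorem.

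For the ``if and only if'' clause, I would compare $\Z_p$-coranks on the two sides.  Both are divisible abelian $p$-groups: the right-hand side has corank equal to the $\Z_p$-rank of $\gk{F}$, namely $r_2 + \delta_F$, while the earlier structural lemma giving $\dvv{\NN(i)^\Gamma}\cong (\Q_p/\Z_p)^{r_2}$ applied at $i=0$ shows that $\dvv{\NN^\Gamma}$ has corank exactly $r_2$.  An inclusion of divisible $p$-groups of coranks $r_2$ and $r_2+\delta_F$ is an equality precisely when $\delta_F = 0$, which is the desired equivalence.

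The main obstacle is therefore the inclusion $\NN \subset \gk{F_\infty}\otimes_{\Z_p}\Q_p/\Z_p$ itself.  The idea is to unwind both sides via Kummer theory and class field theory.  An element $a\otimes p^{-n}\in \K$ lies in $\M$ exactly when $F_\infty(\sqrt[p^n]{a})/F_\infty$ is unramified outside $p$; the further requirement to lie in $\NN$ means, by the Kummer pairing, that $a\otimes p^{-n}$ annihilates the $\Lambda$-torsion part $\trlx$ of $X$.  Translating this second condition idelically via the Artin map, it becomes the assertion that $a$ is a logarithmic unit, i.e.\ lies in $\gk{F_\infty}\otimes\Q_p/\Z_p$.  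The rigorous version of this translation at finite levels $F_n$ is precisely the content of Theorem 2.3 of Kolster \cite{kol:idel}; one then passes to the limit.

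The only delicate point, beyond invoking Kolster's theorem, is verifying compatibility with the definition of $\NN$ as the Kummer-pairing complement of $\trlx$ rather than of some related submodule of $X$; this is ensured by the finiteness of $\h{F}$ and by Iwasawa's structure theorem used repeatedly in Section \ref{sec:tate}, which guarantees that $\XX = X/\trlx$ has the $\Lambda$-rank $r_2$ matching the Gross-kernel corank computation above.
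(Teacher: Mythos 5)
Your proposal is correct and follows essentially the same route as the paper: both rest on Kolster's Theorem 2.3 for the inclusion $\NN\subset\gk{F_\infty}\otimes_{\Z_p}\Q_p/\Z_p$, then take $\Gamma$-invariants and maximal divisible subgroups using the corollary to Lemma \ref{lem:gross}, and settle the equality statement by comparing the corank $r_2$ of $\dvv{\inv{\Gamma}{\NN}}$ with the corank $r_2+\delta_F$ of $\gk{F}\otimes_{\Z_p}\Q_p/\Z_p$. Your corank comparison merely makes explicit what the paper leaves implicit, so no substantive difference remains.
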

\begin{proof}
M. Kolster has proven (\cite{kol:idel}, Theorem 2.3) that $\NN\subset
  \gk{F_\infty}\otimes_{\Z_p}\Q_p/\Z_p$ and thus taking
 $\Gamma$-invariants and then maximal divisible subgroups and using
 the last corollary gives the result. The statement about equality
  follows from the fact that $\gk{F}\otimes\Q_p/\Z_p\cong (\Q_p/\Z_p)^{r_2+\delta_F}$.
\end{proof}

Recall that $\ntatk{F}{1}{n}=\{ a\in \units{F}| a\otimes 1/p^n \in \dvv{\NN^\Gamma}\}$. 

\begin{cor} For any number field $F$ containing $\mu_p$,
  $\ntatk{F}{1}{n}\subset U^S_F\cdot(\units{F})^{p^n}$ with equality if and
  only if $F$ has exactly one $p$-adic prime. 
\end{cor}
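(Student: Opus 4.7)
The plan is to combine Theorem \ref{thm:nn} with a standard Kummer-theoretic computation. From Theorem \ref{thm:nn} one has $\dvv{\NN^\Gamma}\subset\gk{F}\otimes_{\Z_p}\Q_p/\Z_p$ inside $\K$, with equality precisely when $\delta_F=0$. Since $\gk{F}\subset U^S_F\otimes\Z_p$ has free $\Z_p$-cokernel $\image{g_F}$, this extends to a chain $\dvv{\NN^\Gamma}\subset\gk{F}\otimes\Q_p/\Z_p\subset U^S_F\otimes\Q_p/\Z_p\subset\K$. Alongside this, I will use the identification $\units{F}\cap(\units{F_\infty})^{p^n}=\mu_{p^{n_F}}\cdot(\units{F})^{p^n}$, which follows from inflation-restriction applied to the Kummer sequence together with the elementary computation $\coh{1}{\Gamma}{\mu_{p^n}}\cong\mu_{p^{\min(n,n_F)}}$ on the procyclic group $\Gamma\cong\Z_p$ (via $\gamma_0\mapsto 1+p^{n_F}u$ for a topological generator $\gamma_0$ and a unit $u$).

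For the forward inclusion, I take $a\in\ntatk{F}{1}{n}$, so that $a\otimes 1/p^n\in\dvv{\NN^\Gamma}$. By the chain above, $a\otimes 1/p^n$ sits in $U^S_F\otimes\Q_p/\Z_p$ as a $p^n$-torsion element, and any such torsion element is of the form $u\otimes 1/p^n$ for some $u\in U^S_F$. Hence $au^{-1}\otimes 1/p^n=0$ in $\K$, so $au^{-1}\in\units{F}\cap(\units{F_\infty})^{p^n}=\mu_{p^{n_F}}\cdot(\units{F})^{p^n}\subset U^S_F\cdot(\units{F})^{p^n}$, giving $a\in U^S_F\cdot(\units{F})^{p^n}$. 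For the ``if'' direction of the equality, a single $p$-adic prime forces $\image{g_F}=0$ (its $\Z_p$-rank is at most $|S_p(F)|-1=0$), so $\gk{F}=U^S_F\otimes\Z_p$ and $\delta_F=0$; Theorem \ref{thm:nn} then upgrades the first inclusion in the chain to equality, giving $\dvv{\NN^\Gamma}=U^S_F\otimes\Q_p/\Z_p$, whence every $u\in U^S_F$ lies in $\ntatk{F}{1}{n}$.

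For the converse, suppose $\ntatk{F}{1}{n}=U^S_F\cdot(\units{F})^{p^n}$. Then $U^S_F\subset\ntatk{F}{1}{n}$, so the image of $U^S_F/(U^S_F)^{p^n}$ in $\K$ lies in $\dvv{\NN^\Gamma}[p^n]$, which has order $p^{nr_2}$ by the lemma at the start of this section. The Kummer identification pins down the kernel of $U^S_F/(U^S_F)^{p^n}\to\K$ as cyclic of order $p^{\min(n,n_F)}$, so the image has order $p^{nR}$ where $R=r_1+r_2-1+|S_p(F)|$ is the Dirichlet rank of $U^S_F$. The resulting inequality $nR\leq nr_2$, combined with $r_1=0$ (since $\mu_p\subset F$ for odd $p$ rules out real embeddings), forces $|S_p(F)|\leq 1$. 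The main technical obstacle throughout is the identification of $\units{F}\cap(\units{F_\infty})^{p^n}$, which supports both the extraction of the $U^S_F$-representative in the forward inclusion and the order count in the converse; once this is in hand, the remainder is direct bookkeeping.
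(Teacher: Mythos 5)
Your proof is correct and follows essentially the same route as the paper: both arguments rest on Theorem \ref{thm:nn} together with the chain $\dvv{\NN^\Gamma}\subset\gk{F}\otimes_{\Z_p}\Q_p/\Z_p\subset U^S_F\otimes_{\Z_p}\Q_p/\Z_p$, the second inclusion becoming an equality when $|S_p(F)|=1$. You additionally make explicit the Kummer-theoretic identification $\units{F}\cap(\units{F_\infty})^{p^n}=\mu_{p^{n_F}}\cdot(\units{F})^{p^n}$ behind the equality $\set{a\in\units{F}}{a\otimes\tfrac{1}{p^n}\in U^S_F\otimes\Q_p/\Z_p}=U^S_F\cdot(\units{F})^{p^n}$, and you supply a rank count proving the ``only if'' direction, both of which the paper leaves implicit.
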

\begin{proof}
Since $\gk{F}\otimes_{\Z_p}\Q_p/\Z_p\subset U^S_F\otimes\Q_p/\Z_p$
with equality if $F$ has one $p$-adic prime, it
follows that 
\[
\ntatk{F}{1}{n}\subset\{ a\in \units{F}|a\otimes\frac{1}{p^n} \in
U^S_F\otimes\Q_p/\Z_p\}=U^S_F\cdot(\units{F})^{p^n}.
\]
with equality when $F$ has one $p$-adic prime.
\end{proof}
\begin{rem}
In fact, Theorem 17 Iwasawa \cite{iwa:zp} implies that $\NN\subset
U^S_{F_\infty}\otimes \Q_p/\Z_p$ and Lemma 7 of that paper implies
that $\dvv{(U^S_{F_\infty}\otimes
  \Q_p/\Z_p)^\Gamma}=U^S_F\otimes\Q_p/\Z_p$, so that this corollary
can also be derived directly from the results of Iwasawa.
\end{rem}
\begin{cor}\label{cor:gk}
If the Gross conjecture holds for $(F,p)$, then 
there is a natural isomorphism
\[
\gtatk{F}{1}{n}\cong \gk{F}/p^n.
\]
\end{cor}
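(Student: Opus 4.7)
The plan is to exhibit a natural injective homomorphism $\psi\colon\gk{F}/p^n\to\gtatk{F}{1}{n}$ coming from the inclusion $\gk{F}\subset U^S_F\otimes_\Z\Z_p$, and then to conclude surjectivity by an order count using Theorem~\ref{thm:nn} and the cohomological description of $\gtatk{F}{1}{n}$ from Section~\ref{sec:coh}. The key input under the Gross hypothesis is the equality $\dvv{\NN^\Gamma}=\gk{F}\otimes_{\Z_p}\Q_p/\Z_p$ of subgroups of $\K$ furnished by Theorem~\ref{thm:nn}.

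To construct $\psi$, I would first observe that $\gk{F}$ is a $\Z_p$-direct summand of $U^S_F\otimes\Z_p$: the defining exact sequence $0\to\gk{F}\to U^S_F\otimes\Z_p\to\image{g_F}\to 0$ splits because $\image{g_F}\subset\bigoplus_{v\mid p}\Z_p$ is a free $\Z_p$-module. Reducing modulo $p^n$ yields an injection $\gk{F}/p^n\hookrightarrow U^S_F/(U^S_F)^{p^n}$, which in turn injects into $F^\times/(F^\times)^{p^n}$ because any $p^n$-th root in $F^\times$ of an $S$-unit is itself an $S$-unit, so $U^S_F\cap(F^\times)^{p^n}=(U^S_F)^{p^n}$. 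Let $\psi$ be the resulting injection. To verify that $\psi$ takes values in $\gtatk{F}{1}{n}$, I take $a\in\gk{F}$, write $a=\sum_i u_i\otimes\alpha_i$ with $u_i\in U^S_F$ and $\alpha_i\in\Z_p$, and choose integer lifts $\bar\alpha_i$ of $\alpha_i$ modulo $p^n$. Then $\psi([a])=[\prod_i u_i^{\bar\alpha_i}]$ and
\[
\psi([a])\otimes\tfrac{1}{p^n}=\Bigl(\prod_i u_i^{\bar\alpha_i}\Bigr)\otimes\tfrac{1}{p^n}=\sum_i u_i\otimes\tfrac{\alpha_i}{p^n}\in\K,
\]
which is precisely the image in $\K$ of $a\otimes 1/p^n\in\gk{F}\otimes_{\Z_p}\Q_p/\Z_p$. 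Under the Gross hypothesis this lies in $\dvv{\NN^\Gamma}$ by Theorem~\ref{thm:nn}, so $\psi([a])\in\gtatk{F}{1}{n}$.

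For surjectivity I would compare orders. Under the Gross hypothesis, $\gkk{F}=\gk{F}/\proots{F}$ is $\Z_p$-free of rank $r_2$, so $\gk{F}/p^n\cong\proots{F}/\proots{F}^{p^n}\oplus\gkk{F}/p^n$ has order $|\proots{F}[p^n]|\cdot p^{nr_2}$. The short exact sequence $0\to\proots{F}/p^n\to\gtatk{F}{1}{n}\to\dvv{\NN^\Gamma}[p^n]\to 0$ to be established in Section~\ref{sec:coh} (which applies to $i=1$ since $p\nmid 1$), combined with the earlier lemma giving $\dvv{\NN^\Gamma}\cong(\Q_p/\Z_p)^{r_2}$, shows $|\gtatk{F}{1}{n}|=|\proots{F}[p^n]|\cdot p^{nr_2}$. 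The injective $\psi$ must therefore be an isomorphism. The main delicate point is to identify the image of $\gk{F}\otimes_{\Z_p}\Q_p/\Z_p$ as a genuine subgroup of $\K$ (so that Theorem~\ref{thm:nn} applies as an equality there) and to verify cleanly that $\psi([a])\otimes 1/p^n$ coincides with $a\otimes 1/p^n$ inside $\K$.
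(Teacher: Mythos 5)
Your proof is correct, and it reaches the conclusion by a route that is recognisably different in its mechanics from the paper's, though it rests on the same two pillars: Theorem \ref{thm:nn} (under the Gross hypothesis, $\dvv{\NN^\Gamma}=\gk{F}\otimes_{\Z_p}\Q_p/\Z_p$ inside $\K$) and the cohomological exact sequence $0\to \proots{F}/p^n\to\gtatk{F}{1}{n}\to\dvv{\NN^\Gamma}[p^n]\to 0$ from section \ref{sec:coh}. You build the map in the direction $\gk{F}/p^n\to\gtatk{F}{1}{n}$, get injectivity cheaply from the splitting $\gk{F}\subset U^S_F\otimes\Z_p$ (free quotient $\image{g_F}$; equivalently $\gk{F}\cap p^n(U^S_F\otimes\Z_p)=p^n\gk{F}$, since $g_F$ has torsion-free target) together with $U^S_F\cap(\units{F})^{p^n}=(U^S_F)^{p^n}$, and then finish by an order count, using the unconditional lemma $\dvv{\NN^\Gamma}\cong(\Q_p/\Z_p)^{r_2}$ and the fact that $\gkk{F}$ is free of rank $r_2$ exactly when $\delta_F=0$. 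The paper goes the other way: it first identifies $\dvv{\NN^\Gamma}[p^n]$ with $\gkk{F}/p^n$, uses the containment $\ntatk{F}{1}{n}\subset U^S_F\cdot(\units{F})^{p^n}$ to represent classes in $\gtatk{F}{1}{n}$ by $S$-units, constructs from this a map $\gtatk{F}{1}{n}\to\gk{F}/p^n$, and concludes by comparing the two short exact sequences with common kernel $\proots{F}/p^n$ and common quotient $\gkk{F}/p^n$ (a five-lemma argument), with no counting. What your version buys is that you never need the corollary $\ntatk{F}{1}{n}\subset U^S_F\cdot(\units{F})^{p^n}$ nor the passage to $S$-unit representatives; what the paper's version buys is that it avoids the cardinality computation and exhibits the isomorphism purely diagrammatically. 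Note also that your map is indeed natural (it is induced by the inclusion $\gk{F}\subset U^S_F\otimes\Z_p$; the splitting is only used to see injectivity), so the naturality needed later, e.g.\ in Corollary \ref{cor:f1}, is preserved.

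The ``delicate point'' you flag --- viewing $\gk{F}\otimes_{\Z_p}\Q_p/\Z_p$ as an actual subgroup of $\K$ so that Theorem \ref{thm:nn} can be read as an equality of subgroups, and checking that $\psi([a])\otimes\tfrac{1}{p^n}$ is the image of $a\otimes\tfrac{1}{p^n}$ --- is handled by exactly the identifications the paper itself uses (Lemma \ref{lem:gross}, its corollary, and Kolster's theorem), so it is not a gap relative to the paper; your explicit computation with integer lifts $\bar\alpha_i$ of the $\Z_p$-coefficients settles the compatibility correctly.
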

\begin{proof} Since $\delta_F=0$, we have 
\[
\dvv{\NN^\Gamma}[p^n]=(\gk{F}\otimes_{\Z_p}\Q_p/\Z_p)[p^n]=(\gkk{F}\otimes_{\Z_p}\Q_p/\Z_p)[p^n]=\gkk{F}\otimes_{\Z_p}
\left(\frac{1}{p^n}\Z_p/\Z_p\right)=\gkk{F}/p^n
\]
since $\gkk{F}$ is a free $\Z_p$-module.
Thus there is a natural short exact sequence 
\[
0\to \proots{F}/p^n\to \ntatk{F}{1}{n}/(\units{F})^{p^n}\to \gkk{F}/p^n\to 0.
\]
On the other hand, since $(\units{F})^{p^n}\subset \ntatk{F}{1}{n}\subset
U^S_F(\units{F})^{p^n}$, there is a natural isomorphism 
\[
\gtatk{F}{1}{n}=\ntatk{F}{1}{n}/(\units{F})^{p^n}\cong (U^S_F\cap\ntatk{F}{1}{n})/(U^S_F)^{p^n}.
\]
Now let $\epsilon\in U^S_F\cap \ntatk{F}{1}{n}$. Let
$\pun{F}=(U^S_F\otimes\Z_p)/\proots{F}$ so that $\gkk{F}\subset
\pun{F}$ as a $\Z_p$ direct summand. Then
\[
\epsilon\otimes \frac{1}{p^n}\in
\gkk{F}\otimes_{\Z_p}\left(\frac{1}{p^n}\Z_p/\Z_p\right) \subset \pun{F}\otimes_{\Z_p}\left(\frac{1}{p^n}\Z_p/\Z_p\right) 
\]
so that the image of $\epsilon\otimes 1$ in $\pun{F}/p^n$ lies in
$\gkk{F}/p^n$. It follows that $\epsilon\otimes 1\in\gk{F}$ since
$\proots{F}\otimes \Z_p\subset \gk{F}$. Thus we obtain a natural
well-defined homomorphism $\gtatk{F}{1}{n}\to \gk{F}/p^n$
which is an isomorphism in view of the commutative diagram with exact
rows
\begin{eqnarray*}
\xymatrix{
0\ar[r]
&
\proots{F}/p^n\ar[r]\ar[d]^{=}
&
\gtatk{F}{1}{n}\ar[r]\ar[d]
&
\gkk{F}/p^n\ar[d]^{=}\ar[r]
&
0
\\
0\ar[r]
&
\proots{F}/p^n\ar[r]
&
\gk{F}/p^n\ar[r]
&
\gkk{F}/p^n\ar[r]
&
0
\\
}
\end{eqnarray*}
\end{proof}

\begin{rem} This result (together with Corollary \ref{cor:period}) implies and clarifies the result of
  Brauckmann, \cite{brauck:thesis} (see also Kolster
  and Movahhedi, \cite{mk:codes}, Theorem 2.15 and Corollary 2.16)
  that $\tatk{F_m}{i}/(\units{F_m})^p\cong \gk{F_m}/p$ for all $i\geq
  2$ when $m$ is sufficiently large and assuming the Gross conjecture
  for $(F_m,p)$.
\end{rem}

\begin{cor}
Suppose that $F$ has exactly one $p$-adic prime. 

Then 
$\ntatk{F}{i}{n}=U^S_F(\units{F})^{p^n}$ for all $i\equiv 1\pmod{p^{m_n(F)}}$. 

If furthermore $m_F=0$, then $U^S_F(\units{F})^p\subset A_F$ and Leopoldt's conjecture holds for the field $F$ if and only if 
 there is equality.
\end{cor}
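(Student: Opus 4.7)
The plan is to deduce both statements from the previous corollary (which identifies $\ntatk{F}{1}{n}$ when $F$ has a single $p$-adic prime) together with Corollary \ref{cor:period} (the periodicity of the Tate kernels).

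For the first statement, the hypothesis that $F$ has exactly one $p$-adic prime, combined with the previous corollary, gives directly
\[
\ntatk{F}{1}{n} = U^S_F \cdot (\units{F})^{p^n}.
\]
To extend this to all $i \equiv 1 \pmod{p^{m_n(F)}}$, I would invoke Corollary \ref{cor:period}, which yields $\gtatk{F}{i}{n} = \gtatk{F}{1}{n}(i-1)$. The key point is that the congruence on $i$, together with the relation $\kappa(\gamma)^{p^{m_n}} \equiv 1 \pmod{p^{e+n}}$ noted just before Theorem \ref{thm:period}, guarantees that $G_n$ acts trivially on $\mu_{p^n}^{\otimes(i-1)}$. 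Consequently, the Tate twist by $i-1$ does not alter the underlying subset of $F^\times$ once we use the Kummer-theoretic identification of $\gtatk{F}{1}{n}$ as a subgroup of $F^\times/(\units{F})^{p^n}$, and we obtain $\ntatk{F}{i}{n} = \ntatk{F}{1}{n} = U^S_F \cdot (\units{F})^{p^n}$.

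For the second statement I specialise to $n=1$. Since $m_F = m_1(F) = 0$, the congruence restriction on $i$ disappears, so the first part applies for every $i \in \Z$; in particular, setting $i = 0$ gives $\tatk{F}{0} = U^S_F \cdot (\units{F})^p$. Combining this with Greenberg's result recalled earlier in this section---that $\tatk{F}{0} \subset A_F$ in general, with equality equivalent to Leopoldt's conjecture for $(F,p)$---immediately delivers both the inclusion $U^S_F \cdot (\units{F})^p \subset A_F$ and the stated characterisation of Leopoldt's conjecture.

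The only point of care that I anticipate is the bookkeeping with the Tate twist in the first step: one must verify that the identity $\gtatk{F}{i}{n} = \gtatk{F}{1}{n}(i-1)$ coming from Corollary \ref{cor:period} is a genuine identity of subgroups of $F^\times/(\units{F})^{p^n}$ (via the trivialisation of $\mu_{p^n}^{\otimes(i-1)}$ afforded by the trivial $G_n$-action), rather than merely an abstract isomorphism of twisted modules over $F_n$, so that pulling back to $F^\times$ produces the same group. Once this is cleared, both parts reduce to a short unwinding of the definitions.
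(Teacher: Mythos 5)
Your proposal is correct and follows the route the paper intends: the corollary is stated without proof precisely because it is the immediate combination of the preceding corollary (giving $\ntatk{F}{1}{n}=U^S_F\cdot(\units{F})^{p^n}$ when $F$ has one $p$-adic prime), the periodicity of Corollary \ref{cor:period} with the trivial $G_n$-action on $\mu_{p^n}^{\otimes(i-1)}$ trivialising the twist, and Greenberg's identification of $\tatk{F}{0}\subset A_F$ with equality equivalent to Leopoldt's conjecture. Your attention to the untwisting step (identity of subgroups of $F^\times/(\units{F})^{p^n}$ rather than a mere abstract isomorphism) is exactly the right point of care and matches the argument given in the proof of Corollary \ref{cor:period}.
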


\begin{rem} \label{rem:h} Suppose that $F$ has only one prime dividing
 $p$. Then  $\h{F}=0$ if $\psyl{\cl{F}}{p}=0$, since under these
  hypotheses, $F_n/F$ is (totally) ramified at the unique $p$-adic
  prime and hence $\psyl{\cl{F_n}}{p}=0$ for all $n$.
 
It can be shown futhermore (see Greenberg, \cite{gr:zp}, p1241) 
that if $F$ is a CM-field with only one prime dividing
$p$ and if  $\psyl{\cl{F_+}}{p}=0$, then $\h{F}=0$. If Vandiver's
conjecture holds for the prime $p$, then this latter condition
holds for the cyclotomic field $F=\Q(\zeta_p)$ 

 \end{rem}

\begin{exa}
Thus, for example, we can deduce the following theorem of Kersten:
(\cite{ik:zp})

If $n$ is sufficiently large and $F=\Q(\zeta_{p^n})$, then
\[
A_F=U^S_F(\units{F})^p=C_F.
\]

( These fields have only one
$p$-adic prime, so that $\tatk{F}{1}=U^S_F\cdot(\units{F})^p$ for all $n$. They are abelian fields
and thus the Leopoldt  conjecture hold for
these fields and $\tatk{F}{0}=A_{F}$ for all $n$. Finally, $m_F=0$ for $n$ sufficiently large.) 
\end{exa}
\begin{exa} Greenberg (\cite{gr:zp}) proves that if $|\h{F}|=p$ and
  $\mu_{p^2}\not\subset \units{F}$ (so that $m=1$), then $\tatk{F}{i}=\tatk{F}{j}$ if
  \emph{and only if} $i\equiv j\pmod{p}$. 

When $p=3$, he shows that the field $F=\Q(\sqrt{257},\sqrt{-3})$
satisfies this condition.  There is a unique $3$-adic prime in this
field. Thus $\tatk{F}{1}=U^S_F(\units{F})^p$. Since $F$ is an abelian
number field, Leopoldt's Conjecture holds for $F$ and thus
$A_F=\tatk{F}{0}$. As observed above, $\tatk{F}{2}=C_F$, the classical
Tate kernel. Thus in this case we can conclude that the groups $A_F$,
$U^S_F(\units{F})^p$ and $C_F$ are pairwise distinct.
\end{exa}

\section{Cohomological Interpretation of the Generalised Tate Kernels}\label{sec:coh}

Let $E$ be any subfield  of $K$ which is Galois over $F$. (We are
primarily interested in the two cases $E=K$ and $E=\NF$.) Let
$\E\subset \K$ be
its dual with respect to the Kummer pairing. Let $G=\gal{E}{F}$. 
Let $Y=\gal{E}{F_\infty}$ so that there is a group extension
\begin{eqnarray}\label{eqn:seq}
1\to Y\to G\to \Gamma\to 1.
\end{eqnarray}

Thus
there is a perfect duality pairing $\E\times Y\to \mu_{p^\infty}$. Taking, the
$\Gamma$-module structure into account, this gives a perfect pairing
\[
\E(i-1)\times Y(-i) \to \Q_p/\Z_p
\]
for all $i \in \Z.$ 

\begin{lem} For all $i\in\Z$, there are natural isomorphisms of $\Gamma$-modules
\[
\coh{1}{Y}{\Q_p/\Z_p(i)}\cong \E(i-1).
\]
\end{lem}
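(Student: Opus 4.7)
The plan is to deduce the lemma by twisting the Kummer duality between $\E$ and $Y$, using two simple observations. First, since $Y=\gal{E}{F_\infty}$ is a subquotient of the absolute Galois group of $F_\infty$, and $F_\infty$ contains all $p$-power roots of unity, $Y$ acts trivially on each $\Q_p/\Z_p(i)$. Moreover $Y$ is abelian pro-$p$, being a subgroup of $\gal{K}{F_\infty}$, so continuous group cohomology collapses and
\[
\coh{1}{Y}{\Q_p/\Z_p(i)}=\hom{\Z_p}{Y}{\Q_p/\Z_p(i)},
\]
where the right-hand side consists of continuous $\Z_p$-module homomorphisms equipped with the natural $\Gamma$-action $(\gamma\cdot f)(y)=\gamma\cdot f(\gamma^{-1}y\gamma)$, the conjugation taking place in $G$ via the extension (\ref{eqn:seq}).

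Next, the perfect $\Gamma$-equivariant Kummer pairing $\E\times Y\to \mu_{p^\infty}$ set up at the opening of the section identifies $\E\cong \hom{\Z_p}{Y}{\mu_{p^\infty}}$ as $\Gamma$-modules.

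The final step is to twist by $i-1$. Because the $G_{F_\infty}$-action on $Y$ is trivial and $\Z_p(i-1)$ is $\Z_p$-free of rank one, the twist passes through Hom to give
\[
\E(i-1)\cong\hom{\Z_p}{Y}{\mu_{p^\infty}}\otimes_{\Z_p}\Z_p(i-1)\cong\hom{\Z_p}{Y}{\mu_{p^\infty}(i-1)}=\hom{\Z_p}{Y}{\Q_p/\Z_p(i)}.
\]

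The one delicate point, and the main obstacle in the argument, is checking that this chain of identifications is $\Gamma$-equivariant. The Tate twist introduces a factor of $\kappa(\gamma)^{i-1}$ into the $\Gamma$-action on the left, while replacing the coefficient module $\mu_{p^\infty}$ by $\Q_p/\Z_p(i)$ on the right multiplies the action on values by the same factor $\kappa(\gamma)^{i-1}$. Using that $\Gamma$ acts on $Y$ by $\Z_p$-linear conjugation in $G$, which commutes with the twist, the two adjustments match; composing the isomorphisms of the three steps then yields the claimed natural isomorphism.
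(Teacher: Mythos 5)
Your argument is correct and follows essentially the same route as the paper: both proofs use the triviality of the $Y$-action on $\Q_p/\Z_p(i)$ to identify $\coh{1}{Y}{\Q_p/\Z_p(i)}$ with continuous homomorphisms, and then invoke the perfect $\Gamma$-equivariant Kummer pairing between $\E$ and $Y$, shuffling the Tate twist through the duality. The only difference is bookkeeping — the paper carries the twist on $Y$ (writing $\hom{}{Y(-i)}{\Q_p/\Z_p}$) while you carry it on the coefficient module — which is a purely notational variant of the same computation.
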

\begin{proof} Since $Y$ acts trivially on $\mu_{p^n}\subset
  \units{F_\infty}$ for all $n$, we have
\begin{eqnarray*}
\coh{1}{Y}{\Q_p/\Z_p(i)}&=&\hom{}{Y}{\Q_p/Z_p(i)}\\
                        &\cong & \hom{}{Y(-i)}{\Q_p/\Z_p}\\
                        &\cong & \E(i-1).
\end{eqnarray*}
\end{proof}
\begin{lem} \label{lem:coh}
(a) For $i\not= 0$, there is a natural isomorphism
\[
\coh{1}{G}{\Q_p/\Z_p(i)}\cong \inv{\Gamma}{\E(i-1)}.
\]

(b) There is a (split) short exact sequence
\[
0\to \coh{1}{\Gamma}{\Q_p/\Z_p}\to \coh{1}{G}{\Q_p/\Z_p}\to
\inv{\Gamma}{\E(-1)}\to 0.
\]
\end{lem}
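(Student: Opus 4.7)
The plan is to derive both parts from the Hochschild--Serre five-term exact sequence in continuous cohomology associated to the extension of profinite groups $1\to Y\to G\to \Gamma\to 1$, with coefficients in the discrete $G$-module $\Q_p/\Z_p(i)$. Since $\mu_{p^\infty}\subset F_\infty$, the subgroup $Y=\gal{E}{F_\infty}$ acts trivially on $\Q_p/\Z_p(i)$, so $(\Q_p/\Z_p(i))^Y=\Q_p/\Z_p(i)$, and by the preceding lemma $\coh{1}{Y}{\Q_p/\Z_p(i)}\cong \E(i-1)$ naturally as $\Gamma$-modules. Because $\Gamma\cong\Z_p$ has cohomological dimension one, $\coh{2}{\Gamma}{\Q_p/\Z_p(i)}=0$, so the five-term sequence collapses to
\[
0\to \coh{1}{\Gamma}{\Q_p/\Z_p(i)} \to \coh{1}{G}{\Q_p/\Z_p(i)} \to \inv{\Gamma}{\E(i-1)} \to 0.
\]

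For part (a) it remains to show $\coh{1}{\Gamma}{\Q_p/\Z_p(i)}=0$ whenever $i\not=0$. Fix a topological generator $\gamma_0$ of $\Gamma$. For a discrete $\Gamma$-module $M$ one has $\coh{1}{\Gamma}{M}=M/(\gamma_0-1)M$. On $\Q_p/\Z_p(i)$, the operator $\gamma_0-1$ acts as multiplication by $\kappa(\gamma_0)^i-1$, which is a nonzero element of $p\Z_p$ because $\kappa:\Gamma\hookrightarrow 1+p\Z_p$ is injective with open image and $i\neq 0$. Multiplication by any nonzero element of $p\Z_p$ is surjective on the $p$-divisible group $\Q_p/\Z_p$, so the coinvariants vanish and we obtain the required natural isomorphism.

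For part (b), when $i=0$ the action of $\Gamma$ on $\Q_p/\Z_p$ is trivial, so the above vanishing argument no longer applies; instead $\coh{1}{\Gamma}{\Q_p/\Z_p}=\hom{}{\Gamma}{\Q_p/\Z_p}\cong \Q_p/\Z_p$, and the same five-term sequence directly supplies the asserted short exact sequence. Splitness is automatic: the leftmost term is a divisible abelian group, hence injective as an abelian group, so the inflation map admits a retraction. I foresee no serious obstacle; the entire argument rests only on the cohomological dimension of $\Z_p$, the divisibility of $\Q_p/\Z_p$, and the naturality of Hochschild--Serre.
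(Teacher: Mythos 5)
Your proof is correct and takes essentially the same route as the paper: the low-degree (inflation--restriction) terms of the Hochschild--Serre sequence for $1\to Y\to G\to\Gamma\to 1$, combined with the vanishing of $\coh{1}{\Gamma}{\Q_p/\Z_p(i)}$ for $i\neq 0$ and of $\coh{2}{\Gamma}{\Q_p/\Z_p(i)}$, and divisibility of $\coh{1}{\Gamma}{\Q_p/\Z_p}$ for the splitting in (b). The only difference is that you prove the cited ``Lemma of Tate'' vanishing inline (coinvariants under $\gamma_0-1$ acting by $\kappa(\gamma_0)^i-1$, plus $\mathrm{cd}_p(\Z_p)=1$), which the paper simply quotes.
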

\begin{proof} (a) We begin with the observation (the `Lemma of Tate'),
  that when $i\not= 0$,
  $\coh{1}{\Gamma}{\Q_p/\Z_p(i)}=\coh{2}{\Gamma}{\Q_p/\Z_p(i)}=0$. 
Thus, the
  sequence of terms of low degree of the spectral sequence associated
  to the extension (\ref{eqn:seq}) yields a natural isomorphism
\[
\coh{1}{G}{\Q_p/\Z_p(i)}\cong\inv{\Gamma}{\coh{1}{Y}{\Q_p/\Z_p(i)}}\cong \inv{\Gamma}{\E(i-1)}.
\]

(b) When $i=0$, we have $\coh{1}{\Gamma}{\Q_p/\Z_p}\cong \Q_p/\Z_p$
and $\coh{2}{\Gamma}{\Q_p/\Z_p}=0$. Thus the sequence of terms of low
degrees gives the short exact sequence
\[
0\to \coh{1}{\Gamma}{\Q_p/\Z_p}\to \coh{1}{G}{\Q_p/\Z_p}\to
\coh{1}{Y}{\Q_p/\Z_p}^\Gamma\to 0
\] 
which is split since $\coh{1}{\Gamma}{\Q_p/\Z_p}$ is divisible.
\end{proof}

Consider now the sequences of coefficient modules
\begin{eqnarray}
\xymatrix{
0\ar[r]
&
\mu_{p^n}^{\otimes i}\ar[r]
&
\Q_p/\Z_p(i)\ar[r]^{p^n}
&
\Q_p/\Z_p(i)\ar[r]
&
0\\
}\label{seq:1}
\end{eqnarray}
and
\begin{eqnarray}
\xymatrix{
0\ar[r]
&
\Z_p(i)\ar[r]^{p^n}
&
\Z_p(i)\ar[r]
&
\mu_{p^n}^{\otimes i}\ar[r]
&
0.\\
}\label{seq:2}
\end{eqnarray}

Let $\alpha^n_G$ be the composite homomorphism
\begin{eqnarray*}\xymatrix{
\coh{1}{G}{\mu_{p^n}^{\otimes i}}\ar[r]
&
 \coh{1}{G}{\Q_p/\Z_p(i)}[p^n]\ar[r]
&
 \inv{\Gamma}{\E(i-1)}[p^n]\\
}
\end{eqnarray*}
associated to the sequence (\ref{seq:1}). Thus $\alpha^n_G$ is surjective.

Let $j_n$ be the injective homomorphism
\[
\coh{1}{G}{\Z_p(i)}/p^n\to \coh{1}{G}{\mu_{p^n}^{\otimes i}}
\]
associated to the sequence (\ref{seq:2}).

\begin{thm} \label{thm:coh}
The image of $j_n$ is
  $(\alpha^n_G)^{-1}(\dvv{\inv{\Gamma}{\E(i-1)}}[p^n])$ and there is a
  natural short exact sequence
\begin{eqnarray*}
\xymatrix{
0\ar[r]
&
 \proots{F_i}^{\otimes i}/p^n \ar[r]
&
\coh{1}{G}{\Z_p(i)}/p^n\ar[r]
&
 \dvv{\inv{\Gamma}{\E(i-1)}}[p^n]\ar[r]
&
0\\
}
\end{eqnarray*}
where we define $F_{-i}=F_i$ for $i>0$ and $\proots{F_0}^{\otimes 0}/p^n:=\Z/p^n$.
\end{thm}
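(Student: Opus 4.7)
The plan is to combine two short exact sequences extracted from the Galois cohomology of the coefficient sequences (\ref{seq:1}) and (\ref{seq:2}), relating them via the auxiliary sequence $0 \to \Z_p(i) \to \Q_p(i) \to \Q_p/\Z_p(i) \to 0$.

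From the long exact cohomology sequence of (\ref{seq:2}) I extract
\[
0 \to \coh{1}{G}{\Z_p(i)}/p^n \xrightarrow{j_n} \coh{1}{G}{\mu_{p^n}^{\otimes i}} \xrightarrow{\delta} \coh{2}{G}{\Z_p(i)}[p^n] \to 0,
\]
so that $\mathrm{image}(j_n) = \ker{\delta}$; from (\ref{seq:1}) I extract
\[
0 \to \coh{0}{G}{\Q_p/\Z_p(i)}/p^n \to \coh{1}{G}{\mu_{p^n}^{\otimes i}} \xrightarrow{\beta} \coh{1}{G}{\Q_p/\Z_p(i)}[p^n] \to 0.
\]
To relate $\delta$ and $\beta$, I construct a morphism of short exact sequences from (\ref{seq:2}) to $0 \to \Z_p(i) \to \Q_p(i) \to \Q_p/\Z_p(i) \to 0$ with the identity on $\Z_p(i)$, the scaling $x \mapsto p^{-n} x \colon \Z_p(i) \hookrightarrow \Q_p(i)$ in the middle, and the natural inclusion $\mu_{p^n}^{\otimes i} = p^{-n}\Z_p(i)/\Z_p(i) \hookrightarrow \Q_p/\Z_p(i)$ on the right. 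Passing to Galois cohomology yields the identity $\delta = \delta' \circ \beta$, where $\delta'$ is the connecting map for the auxiliary sequence. Hence
\[
\mathrm{image}(j_n) = \beta^{-1}\bigl(D[p^n]\bigr), \qquad D := \ker{\delta'} = \mathrm{image}\bigl(\coh{1}{G}{\Q_p(i)} \to \coh{1}{G}{\Q_p/\Z_p(i)}\bigr).
\]

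The main technical step is the identification $D = \dvv{\coh{1}{G}{\Q_p/\Z_p(i)}}$. Since $\coh{1}{G}{\Q_p(i)}$ is a $\Q_p$-vector space, $D$ is divisible, giving $D \subseteq \dvv{\coh{1}{G}{\Q_p/\Z_p(i)}}$. For the reverse inclusion, the quotient $\coh{1}{G}{\Q_p/\Z_p(i)}/D$ injects via $\delta'$ into $\coh{2}{G}{\Z_p(i)}$; it then suffices to argue that this image has no nontrivial divisible subgroup. This is where I expect the main obstacle, and it will rely on the finite-generation properties of the Iwasawa modules underlying Lemma \ref{lem:coh}.

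Finally, Lemma \ref{lem:coh} provides an isomorphism (for $i \neq 0$) or split surjection (for $i = 0$) $\coh{1}{G}{\Q_p/\Z_p(i)} \to \inv{\Gamma}{\E(i-1)}$ carrying $D[p^n]$ onto $\dvv{\inv{\Gamma}{\E(i-1)}}[p^n]$. Composing with $\beta$ recovers $\alpha^n_G$, proving
\[
\mathrm{image}(j_n) = (\alpha^n_G)^{-1}\bigl(\dvv{\inv{\Gamma}{\E(i-1)}}[p^n]\bigr).
\]
The stated short exact sequence then follows by restricting the sequence for $\beta$ to $\mathrm{image}(j_n)$, using injectivity of $j_n$ to identify the middle term with $\coh{1}{G}{\Z_p(i)}/p^n$, and identifying $\coh{0}{G}{\Q_p/\Z_p(i)}/p^n$ with $\proots{F_i}^{\otimes i}/p^n$ (with the stated convention at $i = 0$).
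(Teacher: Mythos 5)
Your overall route is the same as the paper's: the paper also brings in the auxiliary sequence $0\to\Z_p(i)\to\Q_p(i)\to\Q_p/\Z_p(i)\to 0$, assembles the two Bockstein sequences you wrote into a commutative diagram, and obtains the image statement by a diagram chase. The problem is that the step you yourself flag as ``the main obstacle'' is precisely the substantive point, and your proposal does not supply it: you must show $\dvv{\coh{1}{G}{\Q_p/\Z_p(i)}}\subseteq\mathrm{Ker}(\delta')$, i.e.\ that the maximal divisible subgroup dies in $\coh{2}{G}{\Z_p(i)}$ (in the paper this is exactly the exactness of the right-hand column of its diagram). Worse, the tool you point to --- finite-generation properties of the Iwasawa modules behind Lemma \ref{lem:coh} --- is not available in this setting: $E$ may be as large as $K$, so $Y=\gal{E}{F_\infty}$ is in general not a finitely generated $\Z_p$-module, and $\coh{1}{G}{\Q_p/\Z_p(i)}\cong\inv{\Gamma}{\E(i-1)}$ need not be cofinitely generated (already $\inv{\Gamma}{\K}$ contains $\units{F}\otimes\Q_p/\Z_p$, an infinite sum of copies of $\Q_p/\Z_p$). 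The missing step has an elementary proof needing no finiteness at all: $\coh{2}{G}{\Z_p(i)}=\lim_n\coh{2}{G}{\mu_{p^n}^{\otimes i}}$ is an inverse limit of groups annihilated by powers of $p$, hence has no nonzero element divisible by every power of $p$; since the image of $\dvv{\coh{1}{G}{\Q_p/\Z_p(i)}}$ under $\delta'$ is a divisible subgroup, it must vanish, which gives $D=\dvv{\coh{1}{G}{\Q_p/\Z_p(i)}}$ and with it both assertions of the theorem.

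A smaller point: your closing identification of the kernel of the short exact sequence with $\coh{0}{G}{\Q_p/\Z_p(i)}/p^n$ is only correct for $i\neq 0$; at $i=0$ that group is $0$, while the theorem asserts a kernel $\Z/p^n$. There the $\Z/p^n$ arises as the $p^n$-torsion of the divisible summand $\coh{1}{\Gamma}{\Q_p/\Z_p}\cong\Q_p/\Z_p$ inside $\dvv{\coh{1}{G}{\Q_p/\Z_p}}$, via the split sequence of Lemma \ref{lem:coh}(b) --- which is why the paper runs the case $i=0$ separately. Since you do invoke that split surjection, this is easily repaired, but as written your last sentence attributes the kernel to the wrong term when $i=0$.
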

\begin{proof} Bringing the coefficient sequence
\[
0\to \Z_p(i)\to \Q_p(i)\to \Q_p/\Z_p(i)\to 0
\] 
into play gives us the following commutative diagram with exact rows
and columns:
\begin{eqnarray*}
\xymatrix{
&
&
&
0\ar[d]
&
\\
&
0\ar[d]
&
&
\dvv{\coh{1}{G}{\Q_p/\Z_p(i)}}[p^n]\ar[d]
&\\
0\ar[r]
&
\coh{0}{G}{\Q_p/\Z_p(i)}/p^n\ar[r]\ar[d]
&
\coh{1}{G}{\mu_{p^n}^{\otimes i}}\ar[r]\ar[d]^{=}
&
\coh{1}{G}{\Q_p/\Z_p(i)}[p^n]\ar[d]\ar[r]
&
0\\
0\ar[r]
&
\coh{1}{G}{\Z_p(i)}/p^n\ar[r]^{j_n}
&
\coh{1}{G}{\mu_{p^n}^{\otimes i}}\ar[r]
&
\coh{2}{G}{\Z_p(i)}[p^n]\ar[r]\ar[d]
&
0\\
&
&
&
0
&
\\
}
\end{eqnarray*}

The statement about the image of $j_n$ follows by diagram-chasing.

For the second statement, observe first that for $i\not= 0$, 
$\coh{0}{G}{\Q_p/\Z_p(i)}=\proots{F_i}^{\otimes i}$. 

When $i=0$, 
$\coh{0}{G}{\Q_p/\Z_p}/p^n=0$ and 
$\coh{1}{G}{\Z/p^n}\cong \coh{1}{G}{\Q_p/\Z_p}[p^n]$. However, taking the
  maximal divisible subgroups and then $p^n$-torsion subgroups of the split-exact sequence of  Lemma
  \ref{lem:coh} (b), now gives a short exact sequence
\[
0\to\Z/p^n\to \coh{1}{G}{\Z/p^n}\to \inv{\Gamma}{\E(-1)}[p^n]\to 0.
\]    
\end{proof}

\begin{lem}\label{lem:Fn}
If either $\mu_{p^n}\subset F$ or $i\not\equiv 0\pmod{p}$, the
restriction map induces an isomorphism 
\[
\coh{1}{F}{\mu_{p^n}^{\otimes i}}\cong\coh{1}{F_n}{\mu_{p^n}^{\otimes i}}^{G_n} 
\]
\end{lem}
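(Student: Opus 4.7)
The plan is to apply the inflation-restriction five-term exact sequence to the normal subgroup $G_{F_n}$ of $G_F$ with quotient $G_n$ and coefficients $M := \mu_{p^n}^{\otimes i}$. Because $\mu_{p^n} \subset F_n$, $G_{F_n}$ acts trivially on $M$, so $M^{G_{F_n}} = M$ as $G_n$-modules and the sequence becomes
\[
0 \to \coh{1}{G_n}{M} \to \coh{1}{F}{M} \xrightarrow{\mathrm{res}} \coh{1}{F_n}{M}^{G_n} \to \coh{2}{G_n}{M}.
\]
The whole problem thus reduces to showing that both $\coh{1}{G_n}{M}$ and $\coh{2}{G_n}{M}$ vanish under either hypothesis.

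The case $\mu_{p^n} \subset F$ is trivial, since then $F_n = F$ and $G_n$ is the trivial group. In the remaining case $p \nmid i$ (and $F_n \neq F$), I would proceed via the cyclotomic character $\kappa$, which identifies $G_n$ with $(1 + p^{n_F}\Z_p)/(1 + p^n\Z_p)$, cyclic of order $p^{n - n_F}$. Picking a generator $\sigma$ and setting $u = \kappa(\sigma)$, so that $v_p(u - 1) = n_F$, the hypothesis $p \nmid i$ together with the $p$-adic identity $v_p(a^p - 1) = v_p(a - 1) + 1$ for $a \in 1 + p\Z_p$ (valid because $p$ is odd) yields $v_p(u^i - 1) = n_F$ and, upon iteration, $v_p((u^i)^{p^{n-n_F}} - 1) = n$.

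For cyclic $G_n = \langle \sigma \rangle$, I would then compute using the standard formulae
\[
\coh{1}{G_n}{M} = \ker{N}/(u^i - 1)M, \qquad \coh{2}{G_n}{M} \cong M^{G_n}/NM,
\]
where $N = 1 + u^i + (u^i)^2 + \cdots + (u^i)^{p^{n-n_F}-1}$ is the norm. The relation $N(u^i - 1) = (u^i)^{p^{n-n_F}} - 1$, combined with the valuation computation above, shows that $N$ acts on $M \cong \Z/p^n$ as $p^{n-n_F}$ times a unit; the four subgroups of $\Z/p^n$ then work out to $(u^i - 1)M = \ker{N} = p^{n_F}\Z/p^n$ and $M^{G_n} = NM = p^{n - n_F}\Z/p^n$, forcing $\coh{1}{G_n}{M} = \coh{2}{G_n}{M} = 0$. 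The main (and rather mild) obstacle is precisely the $p$-adic valuation step: this is exactly where the hypothesis $p \nmid i$ is essential, because if $p \mid i$ the order of $u^i$ in $(\Z/p^n)^\times$ drops, and the above identifications of the four subgroups of $\Z/p^n$ break down.
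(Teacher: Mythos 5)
Your proposal is correct and follows essentially the same route as the paper: the inflation--restriction five-term sequence for $G_{F_n}\trianglelefteq G_F$ with quotient $G_n$, reducing everything to the vanishing of $\coh{1}{G_n}{\mu_{p^n}^{\otimes i}}$ and $\coh{2}{G_n}{\mu_{p^n}^{\otimes i}}$. Where the paper disposes of that vanishing by citing the observation that a nonzero subgroup of $\aut{M}=(\Z/p^n)^\times$ acting faithfully on the cyclic module $M$ (faithfulness being exactly where $p\nmid i$ enters) has trivial $\coh{1}{}{}$ and $\coh{2}{}{}$, you verify it by an explicit valuation computation with the cyclic-group cohomology formulas, which is a fine substitute for the same step.
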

\begin{proof}
If $\mu_{p^n}\subset F$, then $F=F_n$ and $G_n=0$ and the statement is
trivial. Otherwise $G_n\not= 0$.

Let $G_F$ denote the absolute Galois group of $F$. The sequence of
terms of low degree associated to the extension 
\[
0\to G_{F_n}\to G_F\to G_n\to 0
\]
has the form
\[
0\to \coh{1}{G_n}{\mu_{p^n}^{\otimes i}}\to
\coh{1}{F}{\mu_{p^n}^{\otimes i}}\to\coh{1}{F_n}{\mu_{p^n}^{\otimes
    i}}^{G_n}\to
\coh{2}{G_n}{\mu_{p^n}^{\otimes i}}\ldots
\]

Because $i\not\equiv 0\pmod{p}$ the map $G_n\to
\aut{\mu_{p^n}^{\otimes i}}=(\Z/p^n)^\times$ is \emph{injective}. Thus
the proof concludes with the observation that if $M$ is cyclic of order
$p^n$ and if $H$ is a nonzero
subgroup of $(\Z/p^n)^\times=\aut{M}$ then
$\coh{1}{H}{M}=\coh{2}{H}{M}=0$.

(On the other hand, if $p|i$ then the map $G_n\to
\aut{\mu_{p^n}^{\otimes i}}$ has a nonzero kernel and it is then easy
to see that $\coh{1}{G_n}{\mu_{p^n}^{\otimes i}}\not= 0$.) 
\end{proof}

Recall that Kummer Theory gives a natural isomorphism
$\delta:\units{F}/(\units{F})^{p^n}\cong \coh{1}{F}{\mu_{p^n}}$
and hence for all $i\in \Z$ there are natural isomorphisms
$\coh{1}{F_n}{\mu_{p^n}^{\otimes i}}\cong \coh{1}{F_n}{\mu_{p^n}}\otimes
\mu_{p^n}^{\otimes (i-1)}\cong
\units{F_n}/(\units{F_n})^{p^n}(i-1)$.

\begin{cor} 
Let $\LL=\gal{\NF}{F}$. If either $p\not| i$ or $\mu_{p^n}\subset F$ 
 the image of the natural
injective map 
\[
\coh{1}{\LL}{\Z_p(i)}/p^n\to \coh{1}{\LL}{\mu_{p^n}^{\otimes i}}\to
\coh{1}{F}{\mu_{p^n}^{\otimes i}}\to (\units{F_n}/(\units{F_n})^{p^n}(i-1) 
\] 
is $\gtatk{F}{i}{n}$.
\end{cor}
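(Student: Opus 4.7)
The plan is to specialise Theorem \ref{thm:coh} to the case $E=\NF$, $G=\LL$, $Y=\gal{\NF}{F_\infty}$. With this choice the $\Gamma$-submodule of $\K$ paired with $Y$ under the Kummer pairing is precisely $\NN$, so the theorem identifies the image of
\[
j_n:\coh{1}{\LL}{\Z_p(i)}/p^n\hookrightarrow\coh{1}{\LL}{\mu_{p^n}^{\otimes i}}
\]
with $(\alpha^n_\LL)^{-1}\bigl(\dvv{\NN(i-1)^\Gamma}[p^n]\bigr)$. What remains is to translate this into a statement about a subgroup of $\units{F_n}/(\units{F_n})^{p^n}(i-1)$.

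First I would assemble the comparison diagram. Inflation from $\LL$ to $G_F$ yields an injection $\coh{1}{\LL}{\mu_{p^n}^{\otimes i}}\hookrightarrow\coh{1}{F}{\mu_{p^n}^{\otimes i}}$, and under the hypotheses on $(i,n)$, Lemma \ref{lem:Fn} together with the Kummer isomorphism identifies $\coh{1}{F}{\mu_{p^n}^{\otimes i}}$ with $\bigl(\units{F_n}/(\units{F_n})^{p^n}(i-1)\bigr)^{G_n}$. Functoriality of the Hochschild--Serre spectral sequence and of the Bockstein for (\ref{seq:1}) then yields a commutative diagram
\[
\xymatrix{
\coh{1}{\LL}{\mu_{p^n}^{\otimes i}}\ar[r]^{\alpha^n_\LL}\ar[d]
&
\NN(i-1)^\Gamma[p^n]\ar[d]\\
\bigl(\units{F_n}/(\units{F_n})^{p^n}(i-1)\bigr)^{G_n}\ar[r]^-{\ed{i}{n}}
&
\K(i-1)
}
\]
in which the right-hand vertical arrow is the natural inclusion; the identification of $\alpha^n_{G_F}$ with $\ed{i}{n}$ amounts to the observation that the Bockstein $\coh{1}{G_{F_\infty}}{\mu_{p^n}^{\otimes i}}\to\coh{1}{G_{F_\infty}}{\Q_p/\Z_p(i)}=\K(i-1)$ sends a Kummer class $a\otimes\chi$ to $a\otimes\chi\otimes(1/p^n)$.

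Next I would characterise the image of inflation in Kummer-theoretic terms: a class $c\in\coh{1}{F}{\mu_{p^n}^{\otimes i}}$ corresponding to $\bar a$ comes from $\coh{1}{\LL}{\mu_{p^n}^{\otimes i}}$ precisely when its restriction to $G_{\NF}$ is trivial, which via Kummer theory amounts to $\ed{i}{n}(\bar a)\in\NN(i-1)$, since $\NN$ is by construction the subgroup of $\K$ orthogonal to $\gal{K}{\NF}$ under the Kummer pairing. As $\dvv{\NN(i-1)^\Gamma}\subseteq\NN(i-1)$, every $\bar a\in\gtatk{F}{i}{n}$ automatically lifts (uniquely) to a class in $\coh{1}{\LL}{\mu_{p^n}^{\otimes i}}$.

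Combining these ingredients: if $c\in\coh{1}{\LL}{\Z_p(i)}/p^n$ has image $\bar a$, then $\ed{i}{n}(\bar a)=\alpha^n_\LL(j_n(c))\in\dvv{\NN(i-1)^\Gamma}$ by Theorem \ref{thm:coh}, so $\bar a\in\gtatk{F}{i}{n}$; conversely, any $\bar a\in\gtatk{F}{i}{n}$ lifts to a class in $\coh{1}{\LL}{\mu_{p^n}^{\otimes i}}$ whose image under $\alpha^n_\LL$ lies in $\dvv{\NN(i-1)^\Gamma}[p^n]$, so it lies in the image of $j_n$. The main technical obstacle is the commutativity of the square above, i.e.\ matching the Bockstein $\alpha^n_{G_F}$ with $\ed{i}{n}$; once this compatibility is established, the rest of the corollary reduces to a routine diagram chase.
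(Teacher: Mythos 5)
Your proposal is correct and follows the paper's overall strategy: specialise Theorem \ref{thm:coh} to $E=\NF$, $G=\LL$, $\E=\NN$, and then compare $\alpha^n_\LL$ with $\ed{i}{n}$ through a commutative diagram assembled from inflation, Lemma \ref{lem:Fn} and the Kummer isomorphism. The one place where you genuinely diverge is the lifting/surjectivity step. The paper handles it by factoring through $H=\gal{K}{F}$ and proving that inflation $\coh{1}{H}{\mu_{p^n}^{\otimes i}}\to\coh{1}{F}{\mu_{p^n}^{\otimes i}}$ is an isomorphism (via reduction to $F_n$ and the equality $\ab{H_n}/p^n=\ab{G_{F_n}}/p^n$), leaving the further descent from $H$ to $\LL$ implicit in its diagram; you instead apply the inflation--restriction sequence for the normal subgroup $G_{\NF}$ of $G_F$ directly and characterise the image of inflation by the condition $\ed{i}{n}(\bar a)\in\NN(i-1)$, using that $\NN$ is the exact annihilator of $\gal{K}{\NF}$ under the Kummer pairing. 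Your route is somewhat more direct and in fact makes explicit precisely the descent argument the paper glosses over, at the cost of not recording the paper's intermediate fact that every class of $\coh{1}{F}{\mu_{p^n}^{\otimes i}}$ is already realised on the quotient $\gal{K}{F}$, which is what lets the paper work entirely inside $K$. In both versions the commutativity of the square matching the Bockstein maps with $\ed{i}{n}$ is asserted rather than verified; your explicit description of the Bockstein on Kummer classes ($a\otimes\chi\mapsto a\otimes\chi\otimes(1/p^n)$) is exactly the right way to check it, so no gap remains beyond what the paper itself leaves to the reader.
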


\begin{proof}
Let $H=
\gal{K}{F}$. 
We begin with the observation that the natural map induces an
isomorphism $\coh{1}{H}{\mu_{p^n}^{\otimes
    i}}\cong\coh{1}{F}{\mu_{p^n}^{\otimes
    i}}=\coh{1}{G_F}{\mu_{p^n}^{\otimes i}}$. To see this, first note
that $\coh{1}{\mathcal{F}}{\mu_{p^n}^{\otimes
    i}}=\coh{1}{\mathcal{F_n}}{\mu_{p^n}^{\otimes i}}^{G_n}$ and
$\coh{1}{\mathcal{H}}{\mu_{p^n}^{\otimes
    i}}=\coh{1}{\mathcal{H_n}}{\mu_{p^n}^{\otimes i}}^{G_n}$ (where
$H_n=\gal{K}{F_n}$) since $\coh{1}{G_n}{\mu_{p^n}^{\otimes
    i}}=\coh{1}{G_n}{\mu_{p^n}^{\otimes i}}=0$. But we have
$\ab{H_n}/p^n=\ab{G_{F_n}}/p^n$ and thus 
$\coh{1}{H_n}{\mu_{p^n}^{\otimes i}}=\hom{}{H_n}{\mu_{p^n}^{\otimes
    i}}=
\hom{\ab{H_n}/p^n}{\mu_{p^n}^{\otimes
    i}}=\hom{}{\ab{G_{F_n}}/p^n}{\mu_{p^n}^{\otimes
    i}}=\coh{1}{F_n}{\mu_{p^n}^{\otimes i}}$.

The result then follows from the commutative diagram
\begin{eqnarray*}
\xymatrix{
\coh{1}{\LL}{\Z_p(i)}/{p^n}\ar[r]^{\alpha^n_{\LL}\circ j_n}\ar[d]^{j_n}
&
\dvv{\NN(i-1)^\Gamma}[p^n]\ar[d]\\
\coh{1}{\LL}{\mu_{p^n}^{\otimes i}}\ar[r]^{\alpha^n_{\LL}}\ar[d]
&
\NN(i-1)\ar[d]\\
\coh{1}{H}{\mu_{p^n}^{\otimes i}}\ar[r]^{\alpha^n_H}
&
\K(i-1)\\
\coh{1}{F}{\mu_{p^n}^{\otimes i}}\ar[u]_{\cong}
&
\left(\units{F_n}/(\units{F_n})^{p^n}(i-1)\right)^{G_n}\ar[l]_{\cong}\ar[u]_{\ed{i}{n}}
}
\end{eqnarray*}
together with Theorem \ref{thm:coh}.
\end{proof}

\begin{cor} If either $i\not\equiv 0\pmod{p}$ or $\mu_{p^n}\subset F$, there is a natural short exact sequence
\[
0\to \proots{F_i}^{\otimes i}/p^n\to \gtatk{F}{i}{n}\to
\dvv{\inv{\Gamma}{\NN(i-1)}}[p^n]\to 0.
\]
\end{cor}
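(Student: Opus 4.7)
The plan is to derive this corollary directly from the two results just established, namely Theorem \ref{thm:coh} and the preceding corollary (which identifies $\gtatk{F}{i}{n}$ with the image of a certain cohomology group in $\units{F_n}/(\units{F_n})^{p^n}(i-1)$).

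More specifically, I would apply Theorem \ref{thm:coh} in the case $E=\NF$, $G=\LL=\gal{\NF}{F}$, so that the dual module $\E$ is exactly $\NN$. The theorem then produces a natural short exact sequence
\[
0\to \proots{F_i}^{\otimes i}/p^n \to \coh{1}{\LL}{\Z_p(i)}/p^n \to \dvv{\inv{\Gamma}{\NN(i-1)}}[p^n]\to 0,
\]
where the right-hand map is the restriction of $\alpha^n_{\LL}$ to the image of $j_n$.

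Under the hypothesis that $i\not\equiv 0\pmod p$ or $\mu_{p^n}\subset F$, the preceding corollary tells us that the composite map $\coh{1}{\LL}{\Z_p(i)}/p^n\to \bigl(\units{F_n}/(\units{F_n})^{p^n}(i-1)\bigr)^{G_n}$ obtained from $j_n$, restriction to $F$, and Kummer theory is injective with image precisely $\gtatk{F}{i}{n}$. Thus this composite supplies a natural isomorphism $\coh{1}{\LL}{\Z_p(i)}/p^n \cong \gtatk{F}{i}{n}$.

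Substituting this isomorphism into the exact sequence from Theorem \ref{thm:coh} yields the asserted sequence, once we check that the left-hand inclusion $\proots{F_i}^{\otimes i}/p^n\hookrightarrow \coh{1}{\LL}{\Z_p(i)}/p^n$ transports to the natural inclusion into $\gtatk{F}{i}{n}$; this is immediate from naturality of the diagram in the preceding corollary, since the map $\coh{0}{\LL}{\Q_p/\Z_p(i)}/p^n\to \coh{1}{\LL}{\mu_{p^n}^{\otimes i}}$ used in the proof of Theorem \ref{thm:coh} is compatible under restriction with the analogous map for $F$, and $\proots{F}\subset\gtatk{F}{i}{n}$ under Kummer theory. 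No real obstacle arises: the whole argument is a matter of combining the previous two results and tracking the diagram, so the only thing to verify carefully is the compatibility of the left-hand terms, which is a direct functoriality check.
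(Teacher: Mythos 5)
Your proposal is correct and follows exactly the route the paper intends: the corollary is an immediate combination of Theorem \ref{thm:coh} applied with $E=\NF$ (so $\E=\NN$ and $G=\LL$) and the preceding corollary identifying $\coh{1}{\LL}{\Z_p(i)}/p^n$ with $\gtatk{F}{i}{n}$ under the stated hypothesis. The compatibility check on the left-hand term is the only point needing care, and your functoriality argument handles it as the paper implicitly does.
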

\begin{lem} Suppose that
  $\dvv{\inv{\Gamma}{\K(i-1)}}=\dvv{\inv{\Gamma}{\NN(i-1)}}$
  (i.e. suppose that Schneider's conjecture holds for $i$). Let
  $\tilde{E}/F$ be a  Galois extension
  containing $\NF$ and  let $\tilde{G}=\gal{\tilde{E}}{F}$. Suppose
  that either $\mu_{p^n}\subset F$ or $p\not| i$. Then the
  natural restriction map induces an isomorphism
\[
\coh{1}{\LL}{\Z_p(i)}/p^n\cong \coh{1}{\tilde{G}}{\Z_p(i)}/p^n.
\]  
\end{lem}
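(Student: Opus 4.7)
My plan is to adapt the proof of Theorem \ref{thm:coh} to both $G = \LL$ and $G = \tilde{G}$ in parallel, and then to conclude by a five-lemma after invoking Schneider's conjecture to identify two maximal divisible subgroups. The ``restriction'' map of the statement is the inflation associated to the surjection $\tilde{G} \twoheadrightarrow \LL$ induced by $\NF \subset \tilde{E}$.

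The key observation is that the arguments of Theorem \ref{thm:coh} and Lemma \ref{lem:coh} nowhere use the hypothesis $E \subset K$: only $F_\infty \subset E$ is required, so that the kernel $Y = \gal{E}{F_\infty}$ of $G \twoheadrightarrow \Gamma$ acts trivially on $\mu_{p^\infty}$. Setting $\tilde{Y} := \gal{\tilde{E}}{F_\infty}$ and $\tilde{\E}(i-1) := \coh{1}{\tilde{Y}}{\Q_p/\Z_p(i)}$, those arguments yield short exact sequences
\[
0 \to \proots{F_i}^{\otimes i}/p^n \to \coh{1}{\LL}{\Z_p(i)}/p^n \to \dvv{\inv{\Gamma}{\NN(i-1)}}[p^n] \to 0
\]
\[
0 \to \proots{F_i}^{\otimes i}/p^n \to \coh{1}{\tilde{G}}{\Z_p(i)}/p^n \to \dvv{\inv{\Gamma}{\tilde{\E}(i-1)}}[p^n] \to 0
\]
fitting into a morphism of short exact sequences whose left-most column is the identity, since $\coh{0}{G}{\Q_p/\Z_p(i)}$ depends only on the common quotient $\Gamma$.

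The next step is the sandwich: the tower $F_\infty \subset \NF \subset \tilde{E}$ produces surjections $G_{F_\infty} \twoheadrightarrow \tilde{Y} \twoheadrightarrow Y$, and injectivity of inflation on $\coh{1}{-}{\Q_p/\Z_p(i)}$ yields $\NN \hookrightarrow \tilde{\E} \hookrightarrow \K$ inside a single ambient module. Taking $\Gamma$-invariants, then maximal divisible subgroups, and then $p^n$-torsion preserves these inclusions; the Schneider hypothesis $\dvv{\inv{\Gamma}{\K(i-1)}} = \dvv{\inv{\Gamma}{\NN(i-1)}}$ collapses the sandwich to a pair of equalities, so the right-most column of the diagram is an isomorphism. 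The five-lemma then supplies the required middle isomorphism $\coh{1}{\LL}{\Z_p(i)}/p^n \cong \coh{1}{\tilde{G}}{\Z_p(i)}/p^n$.

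The main point requiring care is verifying that the generalisation of Theorem \ref{thm:coh} to an arbitrary Galois extension $\tilde{E} \supset F_\infty$ truly goes through: one checks that $\tilde{Y}$ is normal in $\tilde{G}$ with quotient $\Gamma$, that it acts trivially on $\mu_{p^\infty}$ (both immediate from $F_\infty \subset \NF$), and that the spectral-sequence argument of Lemma \ref{lem:coh}(a) still collapses as before. The hypothesis $p \nmid i$ or $\mu_{p^n} \subset F$ is not directly invoked in this cohomological comparison; it is inherited from the preceding corollary, which needed it to identify $\coh{1}{\LL}{\Z_p(i)}/p^n$ with $\gtatk{F}{i}{n}$ on the Kummer side.
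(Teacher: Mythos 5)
Your argument is correct, but it is organised differently from the paper's. The paper first replaces $\tilde{E}$ by $E=\tilde{E}\cap K$, the largest subextension of $K$ inside $\tilde{E}$, and proves that inflation $\coh{1}{\gal{E}{F}}{\mu_{p^n}^{\otimes i}}\to\coh{1}{\tilde{G}}{\mu_{p^n}^{\otimes i}}$ is an isomorphism; the point there is that $\gal{\tilde{E}}{E}$ lies in the kernel of $G_0\to\ab{G_0}/p^n$ (with $G_0=\gal{\tilde{E}}{F_\infty}$), because $\gal{E}{F_\infty}$ is the maximal abelian pro-$p$ quotient of $G_0$. Only then does it compare $\gal{E}{F}$ with $\LL$ by exactly your mechanism: the two sequences of Theorem \ref{thm:coh}, the sandwich $\NN\subset\E\subset\K$, and the Schneider hypothesis. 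You instead absorb the possibly non-abelian kernel $\tilde{Y}=\gal{\tilde{E}}{F_\infty}$ at once, observing that the proofs of Lemma \ref{lem:coh} and Theorem \ref{thm:coh} only use the extension $1\to\tilde{Y}\to\tilde{G}\to\Gamma\to 1$, the triviality of the $\tilde{Y}$-action on $\Q_p/\Z_p(i)$, and Tate's vanishing for $\Gamma$, once $\E(i-1)$ is read as $\coh{1}{\tilde{Y}}{\Q_p/\Z_p(i)}$; that observation is accurate, and the sandwich-plus-five-lemma conclusion is sound, the left-hand terms agreeing because they only depend on $\Gamma$. Two remarks: the inclusion $\coh{1}{\tilde{Y}}{\Q_p/\Z_p(i)}\hookrightarrow\K(i-1)$ needs, besides injectivity of inflation, the identification $\coh{1}{G_{F_\infty}}{\Q_p/\Z_p(i)}\cong\K(i-1)$, i.e. that inflation from $\gal{K}{F_\infty}$ is an isomorphism because the trivial, discrete, $p$-primary coefficients only see the maximal abelian pro-$p$ quotient --- this is precisely the abelianisation fact the paper exploits at finite level, and indeed your $\coh{1}{\tilde{Y}}{\Q_p/\Z_p(i)}$ coincides with the paper's $\E(i-1)$ for $E=\tilde{E}\cap K$, so the two proofs are close cousins; and you are right both that the comparison map is inflation along $\tilde{G}\twoheadrightarrow\LL$ (the paper's ``res'' label notwithstanding) and that the hypothesis ``$\mu_{p^n}\subset F$ or $p$ does not divide $i$'' is not invoked in either proof. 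What your route buys is the elimination of the intermediate field and of the finite-level claim; what the paper's buys is that Theorem \ref{thm:coh} can be cited as stated, for subextensions of $K$ only, without re-examining its proof.
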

\begin{proof}
Let $E/F$ be the largest sub-extension of $K/F$ which is contained in
$\tilde{E}/F$. Let $G=\gal{E}{F}$. I \emph{claim} that the restriction
map induces an isomorphism
\[
\coh{1}{G}{\Z_p(i)}/p^n\cong \coh{1}{\tilde{G}}{\Z_p(i)}/p^n.
\]
To see this observe that in the commmutative exact diagram
\begin{eqnarray*}
\xymatrix{
0\ar[r]
&
\coh{1}{G}{\Z_p(i)}/p^n\ar[r]\ar[d]^{\mathrm{res}}
&
\coh{1}{G}{\mu_{p^n}^{\otimes i}}
\ar[r]\ar[d]^{\rho}
&
\coh{1}{G}{\Q_p/\Z_p(i)}\ar[d]^{\tau}\\
0\ar[r]
&
\coh{1}{\tilde{G}}{\Z_p(i)}/p^n\ar[r]
&
\coh{1}{\tilde{G}}{\mu_{p^n}^{\otimes i}}
\ar[r]
&
\coh{1}{\tilde{G}}{\Q_p/\Z_p(i)}\\
}
\end{eqnarray*}
$\tau$ is injective and it is therefore sufficient to show that $\rho$
is an isomorphism. Let $K_0=\ker{\tilde{G}\to G}$. Then $\rho$ fits
into the exact sequence
\begin{eqnarray*}
\xymatrix{
0\ar[r]
&
\coh{1}{G}{\mu_{p^n}^{\otimes i}}\ar[r]^{\rho}
&
\coh{1}{\tilde{G}}{\mu_{p^n}^{\otimes i}}\ar[r]^{\psi}
&
\coh{1}{K_0}{\mu_{p^n}^{\otimes i}}\\
}
\end{eqnarray*}
and it suffices to show that $\psi$ is the zero map. Let
$G_0=\ker{\tilde{G}\to \Gamma}$. Then $K_0\subset G_0$ and $G_0/K_0$
is the largest abelian pro-$p$ quotient of $G_0$. The map $\psi$
factors through $\coh{1}{G_0}{ \mu_{p^n}^{\otimes
    i}}=\hom{}{\ab{G_0}/p^n}{\mu_{p^n}^{\otimes i}}\to 
\hom{}{K_0}{\mu_{p^n}^{\otimes i}}=\coh{1}{K_0}{\mu_{p^n}^{\otimes i}}$
which is the zero map since $K_0\subset\ker{G_0\to
  \ab{G_0}/p^n}$. This proves the \emph{claim}.

 Let now $\E$ be the dual
of $Y=\gal{E}{F_\infty}$. Thus $\NN\subset\E\subset\K$ and hence  
$\dvv{\inv{\Gamma}{\K(i-1)}}=\dvv{\inv{\Gamma}{\E(i-1)}}=\dvv{\inv{\Gamma}{\NN(i-1)}}$
so that we get the commutative diagram with exact rows 
\begin{eqnarray*}
\xymatrix{
0\ar[r]
&
\proots{F_i}^{\otimes i}/p^n\ar[r]\ar[d]^{=}
&
\coh{1}{\LL}{\Z_p(i)}/p^n\ar[r]\ar[d]^{\mathrm{res}}
&
\dvv{\inv{\Gamma}{\NN(i-1)}}[p^n]\ar[r]\ar[d]^{\cong}
&
0\\
0\ar[r]
&
\proots{F_i}^{\otimes i}/p^n\ar[r]
&
\coh{1}{G}{\Z_p(i)}/p^n\ar[r]
&
\dvv{\inv{\Gamma}{\E(i-1)}}[p^n]\ar[r]
&
0\\
}
\end{eqnarray*}
\end{proof}

Now let $G^S_F$ denote the Galois group of the maximal algebraic
extension of $F$ which is unramified outside $S$. 
\begin{cor}\label{cor:ket}
Let $i\geq 2$ and suppose that either $\mu_{p^n}\subset F$ or $p\not|
i$.  There are natural isomorphisms 
\[
\gtatk{F}{i}{n}\cong \ket{2i-1}{S}{F}/p^n.
\]
\end{cor}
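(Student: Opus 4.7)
The plan is to chain together three identifications. The preceding corollary already realises $\gtatk{F}{i}{n}$ as the image of the injective map out of $\coh{1}{\LL}{\Z_p(i)}/p^n$, so under the hypothesis on $i$ and $n$ it gives a natural isomorphism $\coh{1}{\LL}{\Z_p(i)}/p^n\cong\gtatk{F}{i}{n}$. What remains is to connect $\coh{1}{\LL}{\Z_p(i)}/p^n$ to $\ket{2i-1}{S}{F}/p^n$.

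To pass from $\LL$ to $G^S_F$, I would apply the preceding lemma with $\tilde{E}=F_S$ and $\tilde{G}=G^S_F$. Two hypotheses need to be checked. First, $\NF\subset F_S$: since $M/F_\infty$ is pro-$p$ abelian unramified outside $p$ and $F_\infty/F$ is itself unramified outside $p$, the composite tower $M/F$ is unramified outside $S$, and in particular $\NF\subset M\subset F_S$. Second, Schneider's conjecture must hold at $i$, that is, $\dvv{\inv{\Gamma}{\K(i-1)}}=\dvv{\inv{\Gamma}{\NN(i-1)}}$; for $i\geq 2$ this is exactly Soul\'e's theorem, deduced from Borel's finiteness theorem for $\kos{2i}{F}$, as already invoked in Section \ref{sec:tate}. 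With both hypotheses in place, the preceding lemma yields a natural isomorphism
\[
\coh{1}{\LL}{\Z_p(i)}/p^n\cong\coh{1}{G^S_F}{\Z_p(i)}/p^n.
\]

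The last step is the standard identification of $\coh{1}{G^S_F}{\Z_p(i)}/p^n$ with $\ket{2i-1}{S}{F}/p^n$ for $i\geq 2$, which follows from the Dwyer--Friedlander descent spectral sequence comparing continuous \'etale cohomology of $\spec{\sntr{S}{F}}$ with \'etale $K$-theory (together with the vanishing of $\etcoh{0}{\sntr{S}{F}}{\Z_p(i)}$ for $i\geq 2$, so that only the $E_2^{1,i}$ term contributes in the relevant total degree). Composing the three isomorphisms produces the desired natural map.

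The main conceptual obstacle is confined to this last step: one needs the \'etale cohomology/\'etale $K$-theory comparison for rings of $S$-integers, which rests on the Dwyer--Friedlander machinery. Everything else is either a direct quotation from the previous corollary and lemma of this section, or is Soul\'e's theorem, which has already been recorded.
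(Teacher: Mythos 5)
Your proposal is correct and follows essentially the same route as the paper: the paper's (terse) proof likewise combines the preceding corollary identifying $\gtatk{F}{i}{n}$ with the image of $\coh{1}{\LL}{\Z_p(i)}/p^n$, the preceding lemma applied with $\tilde{E}=F_S$ (its hypothesis being Schneider's conjecture for $i\geq 2$, i.e. Soul\'e's theorem, which is the ``as remarked above'' step), and the Dwyer--Friedlander identification $\ket{2i-1}{S}{F}\cong\coh{1}{G^S_F}{\Z_p(i)}$. You merely spell out the details (e.g. $\NF\subset F_S$ and the degeneration of the descent spectral sequence) that the paper leaves implicit or delegates to the citation.
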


\begin{proof} As remarked above,
  $\dvv{\inv{\Gamma}{\NN(i-1)}}=\dvv{\inv{\Gamma}{\K(i-1)}}$ for all
  $i\geq 2$. Furthermore, the \'etale $K$-theory groups
  $\ket{2i-1}{S}{F}$ are isomorphic to the groups
  $\coh{1}{G^S_F}{\Z_p(i)}$ (Dwyer and Friedlander, \cite{df:etale},
  Proposition 5.1).
\end{proof}

Compare this last result with the identification of
$\gtatk{F}{1}{n}$ in Corollary \ref{cor:gk} above. 

\section{Capitulation Kernels: General Results}
\label{sec:gen}

Let $E/F$ be  a  Galois extension of number fields, with Galois
group $G$. As in \cite{am:cap}, $f_i$ ($i\geq 2$) 
denotes the natural functorial homomorphism 
$\ket{2i-2}{S}{F}\to\ket{2i-2}{S}{E}^G$.  

From \cite{am:cap}, Propositions 1.1 (which is based on the work of
B. Kahn, \cite{kah:des}) and  the remarks that follow it,  we
have:
\begin{thm}\label{thm:am} Let $E/F$ be cyclic of degree $p^n$. Then
 $|\ker{f_i}|=|\coker{f_i}|$ and  
\[
\coker{f_i}\cong
(\ket{2i-1}{S}{F}/p^n)/\norm{E}{F}(\ket{2i-1}{S}{E}/p^n)  \mbox{ for all }
i\geq 2.
\] 
\end{thm}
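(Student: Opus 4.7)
My plan is to follow the strategy of B.~Kahn \cite{kah:des} and Assim--Movahhedi \cite{am:cap}. The first step is to identify the relevant étale $K$-theory groups with Galois cohomology via Dwyer--Friedlander \cite{df:etale}: for $i\geq 2$,
\[
\ket{2i-1}{S}{F}\cong\coh{1}{G^S_F}{\Z_p(i)},\qquad \ket{2i-2}{S}{F}\cong\coh{2}{G^S_F}{\Z_p(i)},
\]
and similarly over $E$. Under these identifications, $f_i$ becomes the natural restriction map $\coh{2}{G^S_F}{\Z_p(i)}\to\coh{2}{G^S_E}{\Z_p(i)}^G$, which is the edge homomorphism of the Hochschild--Serre spectral sequence
\[
E_2^{p,q}=\coh{p}{G}{\coh{q}{G^S_E}{\Z_p(i)}}\Longrightarrow \coh{p+q}{G^S_F}{\Z_p(i)}.
\]

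From the filtration on $\coh{2}{G^S_F}{\Z_p(i)}$, and using that $G^S_E$ has $p$-cohomological dimension $2$ so that $E_2^{p,q}=0$ for $q\geq 3$, I would read off that $\ker{f_i}$ is built out of the terms $E_\infty^{1,1}$ and $E_\infty^{2,0}$, while $\coker{f_i}$ equals the image of the differential $d_2:E_2^{0,2}\to E_2^{2,1}$ (no higher differentials appear in this range). Thus $\coker{f_i}$ is a subgroup of $\coh{2}{G}{\ket{2i-1}{S}{E}}$, and modulo a contribution from $\coh{\bullet}{G}{\proots{E}^{\otimes i}}$ (which is finite and manageable), $\ker{f_i}$ is a quotient of $\coh{1}{G}{\ket{2i-1}{S}{E}}$.

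Because $G$ is cyclic of order $p^n$, Tate cohomology is $2$-periodic and annihilated by $p^n$; hence $\coh{j}{G}{\ket{2i-1}{S}{E}}$ depends only on $\ket{2i-1}{S}{E}/p^n$. The Herbrand quotient of any finite $G$-module is $1$, and $\ket{2i-1}{S}{E}/p^n$ is finite for $i\geq 2$ by Borel's theorem \cite{bor:coh}, which gives the equality $|\ker{f_i}|=|\coker{f_i}|$. For the explicit formula, I would use the description $\hat{H}^0(G,M)=M^G/N_G(M)$ for cyclic $G$: applied to $M=\ket{2i-1}{S}{E}/p^n$, and combined with the fact that under the Dwyer--Friedlander identifications the cyclic norm operator $N_G$ agrees with the étale $K$-theoretic norm $\norm{E}{F}$ (after identifying $M^G$ with $\ket{2i-1}{S}{F}/p^n$ via the restriction edge map), one obtains the claimed isomorphism
\[
\coker{f_i}\cong (\ket{2i-1}{S}{F}/p^n)\big/\norm{E}{F}(\ket{2i-1}{S}{E}/p^n).
\]

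The main technical obstacle is pinning down the precise identification of the differential $d_2:E_2^{0,2}\to E_2^{2,1}$ and the edge homomorphism with, respectively, the cokernel of the $K$-theoretic norm and the natural descent map on étale $K$-theory modulo $p^n$; this is the content of Kahn's descent theorem \cite{kah:des}. Once it is in hand, the cyclicity of $G$ (yielding Herbrand quotient $1$ and Tate-cohomological periodicity) makes both assertions of the theorem immediate.
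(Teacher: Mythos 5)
Your outline follows the same skeleton as the proof the paper is relying on; note that the paper itself gives no argument for this theorem but quotes it from \cite{am:cap}, Proposition 1.1 (which rests on Kahn's descent results \cite{kah:des}). With the Dwyer--Friedlander identifications $\ket{2i-1}{S}{E}\cong\coh{1}{G^S_E}{\Z_p(i)}$, $\ket{2i-2}{S}{E}\cong\coh{2}{G^S_E}{\Z_p(i)}$ and the Hochschild--Serre spectral sequence for $1\to G^S_E\to G^S_F\to G\to 1$, the clean way to run your argument is to use that $\coh{0}{G^S_E}{\Z_p(i)}=0$ for $i\neq 0$ (the cyclotomic character has infinite order), so the $q=0$ row vanishes identically; together with $\mathrm{cd}_p=2$ this gives exactly $\ker{f_i}\cong\coh{1}{G}{\ket{2i-1}{S}{E}}$ and $\coker{f_i}\cong\coh{2}{G}{\ket{2i-1}{S}{E}}=\hat{H}^0(G,\ket{2i-1}{S}{E})$, and the displayed formula then follows from integral descent $(\ket{2i-1}{S}{E})^G\cong\ket{2i-1}{S}{F}$, the compatibility of $N_G$ with the transfer, and the containment $p^n\ket{2i-1}{S}{F}\subseteq\norm{E}{F}(\ket{2i-1}{S}{E})$. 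Two of your intermediate formulations need repair: with $\Z_p(i)$-coefficients there is no $\coh{\bullet}{G}{\proots{E}^{\otimes i}}$ contribution at all (that term belongs to the finite or $\Q_p/\Z_p(i)$-coefficient picture), and you should not identify $(\ket{2i-1}{S}{E}/p^n)^G$ with $\ket{2i-1}{S}{F}/p^n$, which is false in general -- compare Lemma \ref{lem:Fn}, which requires $p\nmid i$ or $\mu_{p^n}\subset F$ precisely because mod-$p^n$ descent is delicate; the theorem's formula survives only because $p^n\ket{2i-1}{S}{F}$ is already contained in the image of the norm.

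The genuine gap is in the equality $|\ker{f_i}|=|\coker{f_i}|$. Your claim that, since Tate cohomology of the cyclic group $G$ is killed by $p^n$, the groups $\coh{j}{G}{\ket{2i-1}{S}{E}}$ ``depend only on $\ket{2i-1}{S}{E}/p^n$'' is false: for $M=\Z_p$ with trivial $\Z/p$-action one has $\coh{1}{G}{M}=0$ and $\hat{H}^0(G,M)=\Z/p$, whereas $\coh{1}{G}{M/p}\cong\Z/p$. Consequently the ``Herbrand quotient of a finite module is $1$'' argument does not apply: the module whose Herbrand quotient you need is $\ket{2i-1}{S}{E}$ itself, a finitely generated $\Z_p$-module of rank $r_2(E)>0$, and for such modules the Herbrand quotient need not be trivial (it equals $p$ in the example just given). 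The missing ingredient -- which is exactly the substance of the Kahn/Assim--Movahhedi input you defer to -- is that $\coh{1}{G^S_E}{\Q_p(i)}$ is cohomologically trivial (indeed free, equivalently induced) as a $\Q_p[G]$-module. This follows from Soul\'e \cite{sou:etcoh} and Borel \cite{bor:coh}: $\dim_{\Q_p}\coh{1}{G^S_{E'}}{\Q_p(i)}=r_2(E')=[E':F]\,r_2(F)$ for every intermediate field $E'$ (all fields here are totally imaginary, and the archimedean decomposition groups are trivial since $p$ is odd), which forces the fixed-point dimensions of a free module and hence Herbrand quotient $1$ for $\ket{2i-1}{S}{E}$; only then do $\coh{1}{G}{\ket{2i-1}{S}{E}}$ and $\coh{2}{G}{\ket{2i-1}{S}{E}}$ have the same order. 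Without some such argument the first assertion of the theorem is not proved.
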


\begin{cor}   Let $E/F$ be cyclic of degree $p^n$.  
Suppose that $\mu_{p^n}\subset F$ or that $p$ does not divide $i$.  Then
\[
|\ker{f_i}|=|\coker{f_i}|=[\gtatk{F}{n}{i}:\norm{E}{F}(\gtatk{E}{n}{i})]
\]
\end{cor}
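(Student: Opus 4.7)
The plan is to deduce the result by combining Theorem \ref{thm:am} with the identification $\gtatk{F}{i}{n}\cong \ket{2i-1}{S}{F}/p^n$ furnished by Corollary \ref{cor:ket}; the essential content beyond those two results is verifying that this identification is functorial enough to convert the cokernel description of Theorem \ref{thm:am} into the index of Tate kernels asserted here. Accordingly, I would first observe that the hypothesis ``$\mu_{p^n}\subset F$ or $p\nmid i$'' also holds with $E$ in place of $F$, so that Corollary \ref{cor:ket} applies to both fields and yields isomorphisms $\gtatk{F}{i}{n}\cong \ket{2i-1}{S}{F}/p^n$ and $\gtatk{E}{i}{n}\cong \ket{2i-1}{S}{E}/p^n$. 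Unpacking the proof of Corollary \ref{cor:ket}, each identification factors through the Dwyer--Friedlander isomorphism with $\coh{1}{G^S_F}{\Z_p(i)}/p^n$, the cohomological identification with $\coh{1}{\gal{\NF}{F}}{\Z_p(i)}/p^n$ afforded by Schneider's conjecture for $i\geq 2$, and finally the explicit embedding into the Tate twist of $\units{F_n}/(\units{F_n})^{p^n}$ constructed in Section \ref{sec:coh}, with an analogous chain for $E$.

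The main step is then to verify that each of these three arrows is functorial with respect to corestriction/norm along $E/F$. The Dwyer--Friedlander map is a natural transformation of functors in the field and so intertwines the $K$-theoretic norm with cohomological corestriction; the Schneider-type identification is a purely Galois-cohomological statement that respects corestriction; and the final identification of the image of $\coh{1}{\gal{\NF}{F}}{\Z_p(i)}/p^n$ with $\gtatk{F}{i}{n}$ comes from Kummer theory, which carries cohomological corestriction to the classical field norm on $\units{F_n}$. Granting this naturality, the resulting commutative square of isomorphisms identifies
\[
\bigl(\ket{2i-1}{S}{F}/p^n\bigr)\big/\norm{E}{F}\bigl(\ket{2i-1}{S}{E}/p^n\bigr) \cong \gtatk{F}{i}{n}\big/\norm{E}{F}\bigl(\gtatk{E}{i}{n}\bigr),
\]
and Theorem \ref{thm:am} both rewrites the left-hand cokernel as $\coker{f_i}$ and supplies the equality $|\ker{f_i}|=|\coker{f_i}|$, which together yield the claimed equality of indices.

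The hard part is the naturality check at the final step: one must verify that the realisation of $\gtatk{F}{i}{n}$ as a subgroup of $(\units{F_n}/(\units{F_n})^{p^n}(i-1))^{G_n}$ really does carry the cohomological corestriction along $E/F$ to the field norm on $\units{E_n}\to\units{F_n}$, compatibly with the $(i-1)$-st Tate twist. This compatibility is implicit in the constructions of Section \ref{sec:coh} and in the Kummer-theoretic duality used throughout, but cleanly verifying that all three arrows respect the respective norm maps amounts to a careful diagram chase across the compositions defining Corollary \ref{cor:ket}.
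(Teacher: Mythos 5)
Your proposal is correct and follows essentially the same route as the paper: the paper's own proof consists precisely of the commutative square linking the isomorphisms $\ket{2i-1}{S}{E}/p^n\cong\gtatk{E}{i}{n}$ and $\ket{2i-1}{S}{F}/p^n\cong\gtatk{F}{i}{n}$ of Corollary \ref{cor:ket} with the norm maps, combined with Theorem \ref{thm:am}. Your additional remarks on the norm/corestriction compatibility of the Dwyer--Friedlander and Kummer-theoretic identifications simply make explicit the commutativity that the paper asserts without further comment.
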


\begin{proof} We have a commutative diagram 
\begin{eqnarray*}
\xymatrix{
\ket{2i-1}{S}{E}/p^n\ar[r]^{\cong}\ar[d]^{\norm{E}{F}}
&
\gtatk{E}{i}{n}\ar[d]^{\norm{E}{F}}\\
\ket{2i-1}{S}{F}/p^n\ar[r]^{\cong}
&
\gtatk{F}{i}{n}\\
}
\end{eqnarray*}
\end{proof}

In particular, we get the following result of Assim and Movahhedi:
\begin{cor}
Let $E/F$ be cyclic of degree $p$. Then for all $i\geq 2$, 
\[
\coker{f_i}\cong \tatk{F}{i}/(\norm{E}{F}{\tatk{E}{i}}(\units{F})^p).
\]
\end{cor}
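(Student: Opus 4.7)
The plan is to specialize the preceding corollary to the case $n=1$ and then translate the result back to $\tatk{F}{i}$ via property (1) of the introduction. Since $F$ is assumed throughout Section \ref{sec:tate} to contain $\mu_p$, the hypothesis that $\mu_{p^n}\subset F$ or $p\nmid i$ is automatic when $n=1$. Hence the preceding corollary, combined with Theorem \ref{thm:am}, gives an isomorphism
\[
\coker{f_i}\cong\gtatk{F}{i}{1}\big/\norm{E}{F}(\gtatk{E}{i}{1}),
\]
not merely an equality of orders: the identifications $\ket{2i-1}{S}{F}/p\cong\gtatk{F}{i}{1}$ furnished earlier are natural in $F$, so they intertwine the norm maps on the two sides.

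Next I would invoke property (1): $\gtatk{F}{i}{1}=(\tatk{F}{i}/(\units{F})^p)(i-1)$, and likewise for $E$. Because $\mu_p\subset F$, the group $G=\gal{E}{F}$ acts trivially on $\mu_p^{\otimes(i-1)}\cong\F{p}$, so the Tate twist is, at the level of underlying abelian groups, merely a choice of isomorphism, and the norm map $\gtatk{E}{i}{1}\to\gtatk{F}{i}{1}$ is identified, up to tensoring with the identity on $\mu_p^{\otimes(i-1)}$, with the ordinary norm $\tatk{E}{i}/(\units{E})^p\to\tatk{F}{i}/(\units{F})^p$. Therefore
\[
\coker{f_i}\cong\big(\tatk{F}{i}/(\units{F})^p\big)\big/\norm{E}{F}\big(\tatk{E}{i}/(\units{E})^p\big)
\]
as abelian groups.

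Finally, since $\norm{E}{F}((\units{E})^p)\subset(\units{F})^p$, a routine application of the third isomorphism theorem converts the right-hand side into
\[
\tatk{F}{i}\big/\big(\norm{E}{F}(\tatk{E}{i})\cdot(\units{F})^p\big),
\]
which is the claimed description of $\coker{f_i}$.

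I do not anticipate any serious obstacle: the argument is essentially a bookkeeping translation of the preceding corollary. The two points to check carefully are the naturality of the isomorphism $\ket{2i-1}{S}{F}/p\cong\gtatk{F}{i}{1}$ in $F$, so that it commutes with $\norm{E}{F}$, and the harmlessness of the Tate twist by $(i-1)$, which is guaranteed by the standing assumption $\mu_p\subset F$.
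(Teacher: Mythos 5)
Your proposal is correct and follows essentially the same route as the paper: the paper's proof is simply the observation that for $n=1$ the preceding corollary applies (the hypothesis being automatic since $\mu_p\subset F$) and that $\gtatk{F}{i}{1}=\tatk{F}{i}/(\units{F})^p(i-1)$, with the twist harmless because $G$ acts trivially on $\mu_p^{\otimes(i-1)}$. Your extra remarks on the naturality of the identification with $\ket{2i-1}{S}{F}/p$ and the third isomorphism theorem just make explicit what the paper leaves implicit.
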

\begin{proof}
When $n=1$, $\gtatk{F}{i}{1}=\tatk{F}{i}/(\units{F})^p$.
\end{proof}
\begin{cor}  \label{cor:f1} Let $E/F$ be cyclic of degree $p^n$.  
Suppose that the Gross conjecture holds for $(E,p)$. Let
  $m_n=\mathrm{max}(m_n(F),m_n(E))$. Then
\[
|\ker{f_i}|=|\coker{f_i}|=[\gk{F}:\norm{E}{F}(\gk{E})]\mbox{ for all
 }i\equiv 1\pmod{p^m}.
\]

If $E$ has only one $p$-adic prime, then
\[
|\ker{f_i}|=|\coker{f_i}|=[U^S_F:\norm{E}{F}(U^S_F)]\mbox{ for all
 }i\equiv 1\pmod{p^m}.
\]
\end{cor}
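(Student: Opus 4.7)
The plan is to chain the preceding corollary with the periodicity of the Tate kernels (Corollary \ref{cor:period}) and with the identification of $\gtatk{F}{1}{n}$ with $\gk{F}/p^n$ under Gross (Corollary \ref{cor:gk}). First I would check that the hypothesis $i\equiv 1\pmod{p^m}$, where $m=\max(m_n(F),m_n(E))$, puts us in the range where the preceding corollary applies: if $m\geq 1$ then $i\equiv 1\pmod{p}$ and in particular $p\nmid i$, while if $m=0$ then $m_n(F)=0$ which forces $\mu_{p^n}\subset F$. Either way, the preceding corollary gives
\[
|\ker{f_i}|=[\gtatk{F}{i}{n}:\norm{E}{F}(\gtatk{E}{i}{n})].
\]

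Next I would apply Corollary \ref{cor:period} simultaneously to $F$ and to $E$. Since $i\equiv 1$ modulo both $p^{m_n(F)}$ and $p^{m_n(E)}$, twisting by $\Z_p(1-i)$ is a well-defined isomorphism of abstract $\Z/p^n$-modules on both sides, and it is manifestly compatible with $\norm{E}{F}$ because the twist is purely a $\Z/p^n$-linear operation. Hence the index is unchanged upon replacing $i$ by $1$. Then I would invoke Corollary \ref{cor:gk}: Gross for $(E,p)$ implies Gross for $(F,p)$ by Kolster's descent (cited earlier in the paper), so we obtain natural isomorphisms $\gtatk{F}{1}{n}\cong\gk{F}/p^n$ and $\gtatk{E}{1}{n}\cong\gk{E}/p^n$. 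Since these come from the functorial definition of $\ntatk{\cdot}{1}{n}$ and $\gk{\cdot}$, they are compatible with $\norm{E}{F}$, giving
\[
|\ker{f_i}|=\bigl[\gk{F}/p^n:\text{image of }\norm{E}{F}(\gk{E})\bigr].
\]

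To pass from this to $[\gk{F}:\norm{E}{F}(\gk{E})]$, I would use the elementary fact that for $x\in\gk{F}\subset\gk{E}$ one has $\norm{E}{F}(x)=[E:F]\cdot x=p^n x$, so $p^n\gk{F}\subset\norm{E}{F}(\gk{E})$, which implies that the index in $\gk{F}/p^n$ coincides with the full index $[\gk{F}:\norm{E}{F}(\gk{E})]$. This establishes the first displayed equation.

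For the second assertion, if $E$ has a unique $p$-adic prime then so does $F$, so Gross holds trivially for both and $\gk{F}=U^S_F\otimes\Z_p$, $\gk{E}=U^S_E\otimes\Z_p$. The index is a finite $p$-group (being equal to the $p$-group $|\ker f_i|$), so extending scalars from $\Z$ to $\Z_p$ does not change it, yielding $[U^S_F:\norm{E}{F}(U^S_E)]$. The main obstacle is bookkeeping the naturality of the Tate twist and of the isomorphism $\gtatk{F}{1}{n}\cong\gk{F}/p^n$ with respect to $\norm{E}{F}$; but both isomorphisms are constructed in a purely functorial way from the Kummer pairing and from the natural embedding $\gk{F}\hookrightarrow\gk{E}$, so this compatibility is essentially formal.
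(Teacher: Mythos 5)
Your argument is correct and follows essentially the same route as the paper's proof: combine the cyclic-descent corollary of Theorem \ref{thm:am} with the periodicity of Corollary \ref{cor:period} (valid because the congruence $i\equiv 1\pmod{p^m}$ makes the twist module $\mu_{p^n}^{\otimes(i-1)}$ Galois-trivial), identify $\gtatk{\cdot}{1}{n}$ with $\gk{\cdot}/p^n$ via Corollary \ref{cor:gk}, and use $p^n\gk{F}\subset\norm{E}{F}(\gk{E})$ to pass to the full index. You even make explicit two points the paper leaves implicit — that $i\equiv 1\pmod{p^m}$ guarantees either $p\nmid i$ or $\mu_{p^n}\subset F$, and that Gross for $(E,p)$ descends to $(F,p)$ by Kolster — so no further changes are needed.
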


\begin{proof} By Corollary \ref{cor:gk} we have a commutative diagram 
\begin{eqnarray*}
\xymatrix{
\ket{2i-1}{S}{E}/p^n\ar[r]^{\cong}\ar[d]^{\norm{E}{F}}
&
\gtatk{E}{i}{n}\ar[r]^{\cong}\ar[d]^{\norm{E}{F}}
&
\gtatk{E}{1}{n}(i-1)\ar[r]^{\cong}\ar[d]^{\norm{E}{F}(i-1)}
&
\gk{E}/p^n(i-1)\ar[d]^{\norm{E}{F}(i-1)}\\
\ket{2i-1}{S}{F}/p^n\ar[r]^{\cong}
&
\gtatk{F}{i}{n}\ar[r]^{\cong}
&
\gtatk{F}{1}{n}(i-1)\ar[r]^{\cong}
&
\gk{F}/p^n(i-1)\\
}
\end{eqnarray*}
and the result follows since $p^n\gk{F}\subset \norm{E}{F}(\gk{E})$ and
$\mu_{p^n}^{\otimes (i-1)}$ is a trivial Galois-module.
\end{proof}

\section{Capitulation Kernels: Tamely Ramified $p$-extensions}
\label{sec:tame}

For a number field $F$ (containing $\mu_p$) we will set 
\[
B_F:=\{ a\in \units{F} | F(\sqrt[p]{a})/F\mbox{ is unramified outside }p\}.
\]

Thus $B_F$ is a subgroup of $\units{F}$ containing $U^S_F\cdot
(\units{F})^p$ and $A_F$. It can also be shown that $C_F\subset
B_F$. More generally, we have: 

\begin{lem}
$\tatk{F}{i}\subset B_F$ for all $i\in \Z$.
\end{lem}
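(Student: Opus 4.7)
The plan is to translate $a\in\tatk{F}{i}$ into a Kummer-theoretic statement over $F_\infty$, apply the defining property of $\M$ to embed $F_\infty(\sqrt[p]{a})/F_\infty$ inside the maximal abelian pro-$p$ extension of $F_\infty$ unramified outside $p$, and then descend to $F$. I begin by handling the twist. Fix a generator $\chi_0$ of the cyclic group $\mu_p^{\otimes(i-1)}\cong\Z/p$. Tensoring the inclusions $\NN\subset\M\subset\K$ over $\Z_p$ with the free rank-one $\Z_p$-module $\Z_p(i-1)$ yields $\NN(i-1)\subset\M(i-1)\subset\K(i-1)$, and the abelian-group isomorphism $\K(i-1)[p]\cong\K[p]$ sending $\bar a\otimes\chi_0\mapsto\bar a$ carries $\M(i-1)[p]$ bijectively onto $\M[p]$. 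Since $a\in\tatk{F}{i}$ the element $a\otimes\chi_0$ lies in $\dvv{\NN(i-1)^\Gamma}\subset\M(i-1)[p]$, so $a\otimes\tfrac{1}{p}\in\M[p]\subset\K[p]=F_\infty^\times/(F_\infty^\times)^p$.

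Next, I invoke Kummer theory directly through the pairing of equation (\ref{eqn:pair}). By the defining formula $g(\sqrt[p]{a})=\langle a\otimes\tfrac{1}{p},g\rangle\sqrt[p]{a}$, the condition $a\otimes\tfrac{1}{p}\in\M$ is equivalent to $g(\sqrt[p]{a})=\sqrt[p]{a}$ for every $g\in\gal{K}{M}$, i.e.\ to $\sqrt[p]{a}\in M$. Hence $F_\infty(\sqrt[p]{a})\subset M$, and by the very definition of $M$ this extension is unramified outside $p$ over $F_\infty$.

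Finally, I descend from $F_\infty$ to $F$. The extension $F_\infty/F$ is itself unramified outside $p$ (being a union of cyclotomic $p$-power extensions), so for any prime $v$ of $F$ not dividing $p$ and any prime $w$ of $F_\infty$ above $v$ one has $e(w|v)=1$; consequently ramification of $F(\sqrt[p]{a})/F$ at $v$ would propagate to ramification of $F_\infty(\sqrt[p]{a})/F_\infty$ at a prime above $w$, contradicting the previous paragraph. Therefore $F(\sqrt[p]{a})/F$ is unramified outside $p$, i.e.\ $a\in B_F$. The only subtle point is the bookkeeping around the twist in the first paragraph, which is purely formal because $\Z_p(i-1)$ is $\Z_p$-free of rank one; no serious obstacle arises.
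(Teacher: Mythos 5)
Your argument is correct and follows essentially the same route as the paper's proof: untwist to get $a\otimes\frac{1}{p}\in\M$, use the Kummer pairing to conclude $\sqrt[p]{a}\in M$, and deduce that $F(\sqrt[p]{a})/F$ is unramified outside $p$ because $M$ is unramified outside $p$ over $F$. Your explicit descent from $F_\infty$ to $F$ via multiplicativity of ramification indices merely spells out the paper's remark that $M/F$ is ramified only at $p$-adic primes; the one small slip is writing $\dvv{\NN(i-1)^\Gamma}\subset\M(i-1)[p]$, where what you actually need (and have) is that the $p$-torsion element $a\otimes\chi_0$ lies in $\M(i-1)[p]$.
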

\begin{proof}
Let $a\in \tatk{F}{i}$. Let $\zeta_p$ be a $p$th root of unity in
$F$. Then
$
a\otimes\zeta_p^{\otimes (i-1)}\in \dvv{\inv{\Gamma}{\NN(i-1)}}
\subset \M(i-1)$. Thus 
\[
a\otimes \frac{1}{p}\in \M 
\] 
and hence $\sqrt[p]{a}\in M$. It follows that $F(\sqrt[p]{a})$ is
unramified outside $p$ since $M/F$ is ramified only at $p$-adic primes.
\end{proof}

Suppose now that $W$ is any subgroup of $B_F$ containing
$(\units{F})^p$. Let $F_W=F(\sqrt[p]{W})$ and
$G_W=\gal{F_W}{F}$. Thus, Kummer theory gives a perfect pairing of
$\F{p}$-vectorspaces:
\begin{eqnarray*}
G_W\times W/(\units{F})^p & \to & \mu_p\\
(\sigma, w) & \mapsto & \sigma(\sqrt[p]{w})/\sqrt[p]{w}.
\end{eqnarray*}

Let $E/F$ be a cyclic degree $p$ extension. Let $\ram{E}{F}$ be the
set of primes of $F$ which ramify in this extension.

Let $H_W=H_W(E/F)$ be the subspace of $G_W$ spanned by the set
\[
\left\{ \sigma_v| v\in \ram{E}{F}\setminus S_p(F) \right\}
\] 
(where $\sigma_v$ denotes the Frobenius of $v$ in $F_W/F$).

Let $t_W=\dim{\F{p}}{H_W}$. 

\begin{thm}\label{thm:main}
Suppose that $E/F$ is a cyclic extension of degree $p$ and that $W$ is
a subgroup of $B_F$ containing $(\units{F})^p$. Then 
\[
[W:W\cap \norm{E}{F}(\units{E})]\geq p^{t_W}.
\]
with equality if $F$ has exactly one $p$-adic prime.
\end{thm}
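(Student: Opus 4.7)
The plan is to use Kummer theory together with the tame Hilbert symbol to identify the image of $W$ in $\units{F}/\norm{E}{F}(\units{E})$. Since $\mu_p\subset F$ and $E/F$ is cyclic of degree $p$, Kummer theory writes $E=F(\sqrt[p]{b})$ for some $b\in\units{F}$, and Hasse's Norm Theorem identifies $\norm{E}{F}(\units{E})$ as the set of $a\in\units{F}$ for which all local Hilbert symbols $(a,b)_v$ vanish. The first observation is that because $a\in W\subset B_F$, the extension $F(\sqrt[p]{a})/F$ is unramified outside $p$, forcing $v(a)\equiv 0\pmod p$ at every $v\nmid p$; combined with $v(b)\equiv 0\pmod p$ at every $v$ not ramified in $E/F$, the tame Hilbert symbol formula immediately gives $(a,b)_v=1$ at every such $v$, so only the primes in $\ram{E}{F}\setminus S_p(F)$ and the $p$-adic primes can obstruct $a$ from being a norm.

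Next I would carry out the key local computation at each tamely ramified prime $v\in\ram{E}{F}\setminus S_p(F)$. There $E_w/F_v$ is totally tamely ramified, so $E_w=F_v(\sqrt[p]{\pi_v u})$ for a uniformizer $\pi_v$ and a unit $u$; since $v\nmid p$ and $\mu_p\subset F_v$ we have $p\mid Nv-1$, and after normalising $a$ to be a unit (possible because $v(a)\equiv 0\pmod p$) the explicit tame Hilbert symbol formula yields $(a,b)_v=\bar{a}^{(Nv-1)/p}$. On the other hand, because $F_v(\sqrt[p]{a})/F_v$ is unramified (thanks to $a\in B_F$), the Frobenius $\sigma_v\in G_W$ acts on $\sqrt[p]{a}$ by the same quantity $\bar{a}^{(Nv-1)/p}$. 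Hence
\[
a\text{ is a local norm at }v\iff (a,b)_v=1\iff \langle a,\sigma_v\rangle=1,
\]
where $\langle\cdot,\cdot\rangle$ denotes the Kummer pairing between $W/(\units{F})^p$ and $G_W$.

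The inequality now follows quickly: any $a\in W\cap\norm{E}{F}(\units{E})$ is a local norm at every tamely ramified $v$, so the above equivalence forces $\langle a,\sigma_v\rangle=1$ for every $v\in\ram{E}{F}\setminus S_p(F)$, i.e.\ $a$ pairs trivially with the whole subspace $H_W\subset G_W$. Since the Kummer pairing $W/(\units{F})^p\times G_W\to\mu_p$ is perfect, restriction of characters to $H_W$ produces a surjection $W/(\units{F})^p\twoheadrightarrow\hom{}{H_W}{\mu_p}$, whose kernel therefore contains $W\cap\norm{E}{F}(\units{E})$. This yields
\[
[W:W\cap\norm{E}{F}(\units{E})]\geq|H_W|=p^{t_W}.
\]

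For the equality assertion, assume $F$ has a unique $p$-adic prime $v_0$ and suppose $a\in W$ pairs trivially with $H_W$. The local analysis above shows $(a,b)_v=1$ at every tamely ramified $v$ and at every $v\nmid p$ unramified in $E/F$, while archimedean places contribute nothing since $p$ is odd. The only remaining place is $v_0$, and the product formula for Hilbert symbols (Artin reciprocity) then forces $(a,b)_{v_0}=1$ automatically. Hasse's Norm Theorem gives $a\in\norm{E}{F}(\units{E})$, proving the reverse containment and hence equality. The principal technical obstacle is the local computation of Step~2 --- keeping the tame Hilbert symbol, the Frobenius action, and the choice of uniformizer consistent so that the identification of local-norm with the Kummer pairing is unambiguous --- but this reduces to a well-known identification once conventions for uniformizers and residue characters are fixed.
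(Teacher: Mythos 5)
Your proposal is correct and follows essentially the same route as the paper: Kummer theory to write $E=F(\sqrt[p]{b})$, the identification of local norms with Hilbert symbols, the observation that $a\in B_F$ kills all symbols at non-$p$-adic unramified places, the matching of the tame symbol at a tamely ramified $v$ with the Frobenius/Kummer pairing against $\sigma_v$, and Hasse's norm theorem plus the product formula (with the single $p$-adic prime) for the equality case. The only difference is presentational: you compute the tame symbol explicitly and set up a two-way local equivalence, whereas the paper argues via the norm group $U_{F_v}\cdot(\units{F_v})^p$ of the unramified extension $F_v(\sqrt[p]{a})$ and organizes the equality direction by producing, from a non-norm $w$, a tame prime inert in $F(\sqrt[p]{w})$ and ramified in $E$.
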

\begin{proof}
We follow closely the argument of Assim and Movahhedi, (\cite{am:cap})
which deals with the case $W=A_F$.

In fact we will show that the complement, $H_W^\perp$,  of $H_W$ with respect to the
Kummer pairing  contains $(W\cap
\norm{E}{F}(\units{E}))/(\units{F})^p$ (and hence the dual of $H_W$ is a quotient
of  $W/W\cap\norm{E}{F}(\units{E})$).
This is equivalent to the statement
\[
\sigma(\sqrt[p]{w})=
  \sqrt[p]{w}\mbox{ for all } \sigma\in H_W \ \imp\ w\in \norm{E}{F}(\units{E})  
\]
which, in turn, is equivalent to the statement
\[
\exists v\in
\ram{E}{F}\setminus S_p(F)\mbox{ with }\sigma_v(\sqrt[p]{w})\not=
  \sqrt[p]{w} \ \imp\ w\not\in \norm{E}{F}(\units{E})\ 
\]
and hence to the statement
\[
\exists v\in
\ram{E}{F}\setminus S_p(F)\mbox{ with }
v\mbox{ inert  in }F(\sqrt[p]{w})/F \ \imp\ w\not\in \norm{E}{F}(\units{E}).
\]

Suppose, then, that there is a non-$p$-adic prime $v$ which is inert
in $K=F(\sqrt[p]{w})$ and ramified in $E/F$.
By Kummer Theory, $E=F(\sqrt[p]{b})$ for some $b\in F^\times$.
Then $K_{v'}/F_v$ is an unramified cyclic extension and hence
$\norm{K_{v'}}{F_v}(\units{K_{v'}})=U_{F_v}\cdot (\units{F_v})^p$.
But $b\not\in U_{F_v}\cdot (\units{F_v})^p$ since
$F_v(\sqrt[p]{b})/F_v=E_u/F_v$ is ramified. It follows that the Hilbert symbol
$
\artin{w}{b}{v}_p
$
is nontrivial, and hence $w\not\in \norm{E_u}{F_v}(\units{E_u})$.

Conversely, suppose that $|S_p(F)|=1$ and  that $w\not\in \norm{E}{F}(\units{E})$.
 We must show that there is a non-$p$-adic prime ramifying in $E$ but
 inert in $F(\sqrt[p]{w})=K$.

By
Hasse's norm theorem, there is a finite prime $v_0$ such that
$w\not\in\norm{E_{w_0}}{F_{v_0}}(E_{w_0}^\times)$. Hence
\[
\artin{w}{b}{v_0}_p\not= 1.
\]

By Artin's reciprocity law (and the fact that $|S_p|=1$) there exists
$v\not\in S_p$ with 
\[
\artin{w}{b}{v}_p\not= 1.
\]
Thus $w\not\in\norm{E_{u}}{F_{v}}(E_{u}^\times)$. In particular, $v$
does not split in $E$. 

Let $K=F(\sqrt[p]{w})$. Then $v$ does not split in $K$ either. So
$K_{v'}/F_v$ is a cyclic degree $p$ extension. But since $w$ is a norm
from $K_{v'}$, we have $K_{v'}\not= E_u$. But $K_v'/F_v$ is an
unramified extension since $v\not\in S_p(F)$ and $w\in B_F$, and hence is the \emph{unique}
cyclic degree $p$ unramified extension of $F_v$. It follows that
$E_u/F_v$ is ramified; i.e., $v$ ramifies in $E$ but is inert in $K$
as required.

\end{proof}

\begin{cor} Suppose that $E/F$ is a cyclic degree $p$ extension. Let
  $m=\mathrm{max}(m_F,m_E)$. Fix $j\in \Z$ and let $t_j:=
  t_{\tatk{F}{j}}(E/F)$. Then  
\[
|\ker{f_i}|\geq p^{t_j}\mbox{ for all }i\equiv j\pmod{p^m}\ (i\geq 2).
\]
\end{cor}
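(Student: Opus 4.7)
The plan is to chain together three results from the preceding sections. First, since $i\geq 2$ and $\mu_p\subset F$, Theorem~\ref{thm:am} (with $n=1$) combined with Corollary~\ref{cor:ket} identifies
\[
|\ker{f_i}| = |\coker{f_i}| = \left[\tatk{F}{i} : \norm{E}{F}(\tatk{E}{i})(\units{F})^p\right].
\]
This is essentially the second corollary of Section~\ref{sec:gen} (for cyclic degree-$p$ extensions), which I would quote directly.

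Second, I would transfer this index from $i$ to $j$ using the periodicity of Corollary~\ref{cor:period}. Since $i\equiv j\pmod{p^m}$ with $m\geq m_F$ and $m\geq m_E$, that corollary gives the equalities $\gtatk{F}{i}{1}(j-i)=\gtatk{F}{j}{1}$ and $\gtatk{E}{i}{1}(j-i)=\gtatk{E}{j}{1}$. Because $\gtatk{F}{i}{1}=(\tatk{F}{i}/(\units{F})^p)(i-1)$ by definition, stripping off the Tate twist by $\mu_p^{\otimes(j-i)}$ (a free rank-one $\F{p}$-module, so invertible under tensor and compatible with the norm) yields literal equalities of subgroups $\tatk{F}{i}=\tatk{F}{j}$ in $\units{F}$ and $\tatk{E}{i}=\tatk{E}{j}$ in $\units{E}$, under which the two norm maps coincide. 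Consequently the index in the first paragraph equals $\left[\tatk{F}{j} : \norm{E}{F}(\tatk{E}{j})(\units{F})^p\right]$.

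Third, because $[E:F]=p$, for every $a\in\units{F}$ we have $\norm{E}{F}(a)=a^p$, so $(\units{F})^p\subset\norm{E}{F}(\units{E})$; together with $\norm{E}{F}(\tatk{E}{j})\subset\norm{E}{F}(\units{E})$ this gives the inclusion
\[
\norm{E}{F}(\tatk{E}{j})(\units{F})^p\subset\tatk{F}{j}\cap\norm{E}{F}(\units{E}),
\]
and hence the inequality $\left[\tatk{F}{j}:\norm{E}{F}(\tatk{E}{j})(\units{F})^p\right]\geq\left[\tatk{F}{j}:\tatk{F}{j}\cap\norm{E}{F}(\units{E})\right]$. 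Since $(\units{F})^p\subset\tatk{F}{j}\subset B_F$ (the latter inclusion being the opening lemma of Section~\ref{sec:tame}), Theorem~\ref{thm:main} applies with $W=\tatk{F}{j}$ and bounds this last index below by $p^{t_j}$, giving the claim. The only subtle step is the ``untwisting'' in the second paragraph—matching the module-theoretic equality in Corollary~\ref{cor:period} with a genuine equality of subgroups of $\units{F}$ and $\units{E}$—and this is precisely the reason the modulus $p^{m}=p^{\max(m_F,m_E)}$ is the correct one.
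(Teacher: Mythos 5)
Your proposal is correct and follows essentially the same route as the paper's own proof: identify $|\ker{f_i}|$ with the index $[\tatk{F}{i}:\norm{E}{F}(\tatk{E}{i})(\units{F})^p]$, use the periodicity modulo $p^{m}$ to replace $i$ by $j$ (for both $F$ and $E$, which is why $m=\mathrm{max}(m_F,m_E)$), note the inclusion $\norm{E}{F}(\tatk{E}{j})(\units{F})^p\subset\tatk{F}{j}\cap\norm{E}{F}(\units{E})$, and apply Theorem \ref{thm:main} with $W=\tatk{F}{j}$ via the lemma $\tatk{F}{j}\subset B_F$. Your explicit untwisting discussion is simply a careful spelling-out of the equality $\tatk{F}{i}=\tatk{F}{j}$ that the paper asserts directly.
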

\begin{proof}
We have $\tatk{F}{i}=\tatk{F}{j}$ for all $i$ with $i\equiv
j\pmod{p^m}$. Thus,
for $i\geq 2$ and $i\equiv j\pmod{p^m}$ we thus have
\begin{eqnarray*}
|\ker{f_i}|&=&[\tatk{F}{i}:\norm{E}{F}(\tatk{E}{i})(\units{F})^p]\\
           &=&[\tatk{F}{j}:\norm{E}{F}(\tatk{E}{j})(\units{F})^p]\\           
           &\geq & [\tatk{F}{j}:\tatk{F}{j}\cap \norm{E}{F}{\units{E}}]\\
&\geq & p^{t_j}.
\end{eqnarray*}
\end{proof}
\begin{cor}
Suppose that $E/F$ is a
cyclic degree $p$ extension and that $F$ has exactly one $p$-adic prime. Let $m=\mathrm{max}(m_F,m_E)$.  Then 
\[
|\ker{f_i}|\geq p^t\mbox{ for all }i\equiv 1\pmod{p^m}\ (i\geq 2)
\]
where $t=t_{U^S_F}(E/F)$. 
\end{cor}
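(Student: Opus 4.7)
The plan is simply to invoke the immediately preceding corollary with $j=1$ and then identify $\tatk{F}{1}$ explicitly under the one--prime hypothesis. Specialising that corollary to $j = 1$ gives directly
\[
|\ker{f_i}| \geq p^{t_{\tatk{F}{1}}}\quad \mbox{for all } i\equiv 1\pmod{p^m},\ i\geq 2,
\]
and no additional work is needed at this step: the machinery (Theorem \ref{thm:am} identifying $\coker{f_i}$ with a Tate--kernel quotient, the periodicity of Corollary \ref{cor:period} reducing $i$ to $j$, and Theorem \ref{thm:main} turning the subgroup index into a Frobenius--rank bound via $t_W$) is all packaged there.

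To finish, I identify $\tatk{F}{1}$ under the hypothesis that $F$ has exactly one $p$-adic prime. The corollary following Theorem \ref{thm:nn} asserts the inclusion $\ntatk{F}{1}{n}\subset U^S_F\cdot(\units{F})^{p^n}$ with equality if and only if $F$ has exactly one $p$-adic prime. Specialising to $n=1$ and using the identity $\ntatk{F}{1}{1}=\tatk{F}{1}$ built into the definitions, I get $\tatk{F}{1}=U^S_F\cdot(\units{F})^p$ under our hypothesis. Interpreting the shorthand $t_{U^S_F}$ as $t_W$ for $W = U^S_F\cdot(\units{F})^p$ (the natural subgroup of $B_F$ containing $(\units{F})^p$ attached to $U^S_F$, whose membership in $B_F$ is guaranteed by the already established inclusion $\tatk{F}{i}\subset B_F$), this gives $t_{\tatk{F}{1}}=t_{U^S_F}=t$ and the stated lower bound follows.

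There is essentially no real obstacle here: the result is a direct specialisation of the preceding corollary, with the one--prime hypothesis used purely to pin down $\tatk{F}{1}$ concretely. The only point worth flagging is that Theorem \ref{thm:main} gives an \emph{equality} $[W:W\cap\norm{E}{F}(\units{E})]=p^{t_W}$ whenever $F$ has exactly one $p$-adic prime, so in this setting $p^t$ is in fact the sharp value extracted by the present argument, even though the corollary only claims the inequality.
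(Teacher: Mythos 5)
Your argument is correct and is exactly the paper's intended route: the corollary is the $j=1$ case of the immediately preceding corollary together with the identification $\tatk{F}{1}=U^S_F\cdot(\units{F})^{p}$ for fields with a single $p$-adic prime (so $t_{\tatk{F}{1}}=t_{U^S_F}$, and $\tatk{F}{1}\subset B_F$ ensures Theorem \ref{thm:main} applies). Only your closing remark needs a caveat: the equality in Theorem \ref{thm:main} makes the index $[W:W\cap\norm{E}{F}(\units{E})]$ exactly $p^{t}$, but $|\ker{f_i}|$ itself is still only bounded below, since the step $[\tatk{F}{1}:\norm{E}{F}(\tatk{E}{1})(\units{F})^p]\geq [\tatk{F}{1}:\tatk{F}{1}\cap\norm{E}{F}(\units{E})]$ remains an inequality.
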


\begin{rem}
Observe that given a subgroup, $W$, of $B_F$, containing
$(\units{F})^p$, $t_W=t_W(E/F)$ is the maximal size of a set
$\{ v_1,\ldots,v_t\}$ of non-$p$-adic primes ramifying in $E$ and such
that $\sigma_{v_1},\ldots,\sigma_{v_t}$ is linearly independent in
$\gal{F(\sqrt[p]{W})}{F}$. In the case $W=A_F$,
$F(\sqrt[p]{W})=\tilde{F_1}$, the compositum of the first layers of
the $\Z_p$-extensions of $F$, and the set $\{ v_1,\ldots,v_t\}\cup
S_p$ is said to be \emph{primitive for $(F,p)$} (see \cite{gras-jaul}). 

For other subgroups
$W$ of $B_F$ (eg, $W=U^S_F$, $W=C_F$, $W=\tatk{F}{i}$ any $i$, etc) we can use the term
\emph{$W$-primitive for $(F,p)$}. Thus the number $t$ in the last corollary is the
maximal number of tamely-ramified primes in $E/F$ which belong to a
$U^S_F$-primitive set for $(F,p)$.  
\end{rem}

\begin{cor} Suppose that
  Leopoldt's conjecture holds for $(F,p)$. Suppose that $E/F$ is
  cyclic of degree $p$. Let $m=\mathrm{max}(m_F,m_E)$. Then 
\[
|\ker{f_i}|\geq p^t\mbox{ for all }i\equiv 0\pmod{p^m} \ (i\geq 2)
\]
where $t$ denotes the maximal number of tamely-ramified primes in
$E/F$ belonging to a primitive set for $(F,p)$. 
\end{cor}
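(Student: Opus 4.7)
The plan is to obtain this as an immediate specialisation of the preceding corollary to $j = 0$, with Leopoldt's conjecture entering only through Greenberg's identification of $\tatk{F}{0}$ with $A_F$.

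First I would apply the previous corollary with $j = 0$: this yields
\[
|\ker{f_i}| \geq p^{t_0} \mbox{ for all } i \equiv 0 \pmod{p^m},\ i \geq 2,
\]
where $t_0 := t_{\tatk{F}{0}}(E/F)$ is, by definition, the dimension of the $\F{p}$-subspace of $\gal{F(\sqrt[p]{\tatk{F}{0}})}{F}$ spanned by the Frobenius elements of the non-$p$-adic primes in $\ram{E}{F}$.

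The next step is to identify $\tatk{F}{0}$ with $A_F$. As recalled in Section \ref{sec:tate}, Greenberg showed that $\tatk{F}{0} \subset A_F$ always, with equality if and only if Leopoldt's conjecture holds for $(F,p)$. Under our hypothesis we therefore have
\[
F(\sqrt[p]{\tatk{F}{0}}) = F(\sqrt[p]{A_F}) = \tilde{F_1},
\]
the compositum of the first layers of the $\Z_p$-extensions of $F$. Consequently $t_0$ is the maximal number of non-$p$-adic primes in $\ram{E}{F}$ whose Frobenius elements in $\gal{\tilde{F_1}}{F}$ are linearly independent, which by the definition recalled in the remark preceding this corollary is precisely the maximal number of tamely-ramified primes in $E/F$ belonging to a primitive set for $(F,p)$.

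There is no genuine obstacle here: the argument is bookkeeping once Greenberg's equality $\tatk{F}{0} = A_F$ under Leopoldt is in hand, and the latter is already established in Section \ref{sec:tate}. The only point worth verifying carefully is that the primitivity notion from the remark — framed in terms of linear independence of Frobenius elements in $\gal{F(\sqrt[p]{W})}{F}$ for $W = A_F$ — matches the classical definition of a primitive set from \cite{gras-jaul}, which is exactly the content of taking $F(\sqrt[p]{A_F}) = \tilde{F_1}$.
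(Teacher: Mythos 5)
Your proposal is correct and follows essentially the same route as the paper: the paper's proof is exactly the specialisation $j=0$ of the preceding corollary, combined with Greenberg's equality $\tatk{F}{0}=A_F$ under Leopoldt and the identification $F(\sqrt[p]{A_F})=\tilde{F_1}$ from the remark, which links $t_{A_F}$ to the primitive-set count. Your write-up merely makes these implicit steps explicit.
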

\begin{proof}
Since Leopoldt's conjecture holds, we have $\tatk{F}{0}=A_F$.
\end{proof}
\section{Capitulation Kernels: Wildly Ramified Extensions}
\label{sec:wild}

The results of the last section give no information about the
situation in which there are no tamely-ramified primes. The lower bounds
obtained depend on the index $[\tatk{F}{i}:\tatk{F}{i}\cap
  \norm{E}{F}(\units{E})]$. 
However, when there is no tame ramification 
we have:

\begin{lem} 
Suppose that the field $F$ has exactly one $p$-adic prime and that $E/F$ is a cyclic degree $p$ extension in which all
non-$p$-adic primes are unramified. Then $B_F\subset
\norm{E}{F}(\units{E})$. 
\end{lem}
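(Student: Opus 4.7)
The plan is to apply Hasse's norm theorem for the cyclic extension $E/F$: for any $a\in B_F$ one need only verify that $a$ is a local norm at every place of $F$. Writing $E=F(\sqrt[p]{b})$ by Kummer theory, this is equivalent to showing that the Hilbert symbol $\artin{a}{b}{v}_p$ equals $1$ at every place $v$ of $F$.

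First I would dispose of the non-$p$-adic places. Fix $v\nmid p$; by the hypothesis on $E/F$, the local extension $F_v(\sqrt[p]{b})/F_v$ is unramified (possibly trivial), so its norm group contains $U_{F_v}\cdot(\units{F_v})^p$. Since $a\in B_F$, the extension $F_v(\sqrt[p]{a})/F_v$ is likewise unramified at $v$, and because $\mu_p\subset F_v$ with $v\nmid p$, this is equivalent to $a\in U_{F_v}\cdot(\units{F_v})^p$. Hence $a$ belongs to the local norm group at $v$, and $\artin{a}{b}{v}_p=1$. This is essentially the same local computation that appeared in the converse half of Theorem \ref{thm:main}.

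Next, I would invoke Artin's reciprocity law, $\prod_v\artin{a}{b}{v}_p=1$. Since every factor with $v\nmid p$ is trivial and $F$ possesses, by assumption, exactly one $p$-adic place $v_0$, the remaining symbol $\artin{a}{b}{v_0}_p$ must also equal $1$. Thus $a$ is a local norm at $v_0$ as well, and Hasse's norm theorem for cyclic extensions gives $a\in\norm{E}{F}(\units{E})$.

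The decisive step, and the one where the single-prime hypothesis is indispensable, is this final reciprocity application: with two or more $p$-adic primes, Artin reciprocity would only constrain the \emph{product} of the local symbols at $p$-adic places, not each one individually, and the argument would fail. The local analysis away from $p$ is routine given the definition of $B_F$ together with the standard fact that unramified cyclic degree $p$ extensions of local fields with $\mu_p$ have norm group $U\cdot(\units{})^p$.
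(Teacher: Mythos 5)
Your proof is correct and is in substance the same as the paper's: the paper simply takes $W=B_F$ in Theorem \ref{thm:main}, where $t_W=0$ because $\ram{E}{F}\setminus S_p(F)=\emptyset$ and equality holds since $F$ has a single $p$-adic prime, and the proof of that equality case is exactly the combination of Hasse's norm theorem, Artin reciprocity at the unique $p$-adic place, and the unramified local-norm computation you spell out. In effect you have inlined the proof of the cited theorem (arguing the contrapositive directly) rather than taken a genuinely different route.
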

\begin{proof}
Taking $W=B_F$ in Theorem \ref{thm:main}. Then $t_W=0$ since
$\ram{E}{F}\setminus S_p(F)=\emptyset$. Thus $[B_F:B_F\cap\norm{E}{F}(\units{E})]=1$.
\end{proof}
\begin{cor}
Suppose that the field $F$ has exactly one $p$-adic prime and that $E/F$ is a cyclic degree $p$ extension in which all
non-$p$-adic primes are unramified. Then $\tatk{F}{i}\subset
\norm{E}{F}(\units{E})$ for all $i\in \Z$.
\end{cor}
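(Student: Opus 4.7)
The plan is to observe that this corollary follows immediately from the combination of two results already in hand: the preceding lemma of the same section, and the earlier containment $\tatk{F}{i}\subset B_F$ (which appears as the first lemma of Section~\ref{sec:wild}).

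First, I would invoke the earlier lemma stating that $\tatk{F}{i}\subset B_F$ for all $i\in\Z$. The point there is that if $a\in\tatk{F}{i}$ then $a\otimes\zeta_p^{\otimes(i-1)}$ lies in the divisible part of $\NN(i-1)^\Gamma\subset \M(i-1)$, so $a\otimes 1/p\in\M$, which forces $\sqrt[p]{a}$ to lie in the field $M$ which is unramified outside $p$. This containment does not require the one-prime or tame-ramification hypotheses; it holds for any number field $F$ containing $\mu_p$.

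Next, the preceding lemma gives $B_F\subset \norm{E}{F}(\units{E})$ under the present hypotheses (one $p$-adic prime in $F$, and $E/F$ cyclic of degree $p$ with no tamely ramified primes). Chaining these gives
\[
\tatk{F}{i}\subset B_F\subset \norm{E}{F}(\units{E})
\]
for every $i\in\Z$, which is the desired conclusion. There is no real obstacle here; the work has been done in establishing the two intermediate facts, and the corollary is simply their concatenation. The only thing to double-check is that the inclusion $\tatk{F}{i}\subset B_F$ is asserted for all $i\in\Z$ and not merely for $i\geq 2$, but this is indeed what the earlier lemma states, since the argument via $\dvv{\NN(i-1)^\Gamma}\subset \M(i-1)$ is valid without any positivity assumption on $i$.
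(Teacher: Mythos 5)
Your proposal is correct and is exactly the argument the paper intends: the corollary is stated without proof precisely because it is the immediate concatenation of the preceding lemma $B_F\subset \norm{E}{F}(\units{E})$ (proved via Theorem \ref{thm:main} with $W=B_F$, $t_W=0$) and the earlier containment $\tatk{F}{i}\subset B_F$ for all $i\in\Z$, and your check that the latter needs no restriction to $i\geq 2$ is right. The only slip is bibliographic: that containment lemma appears in Section \ref{sec:tame}, not as the first lemma of Section \ref{sec:wild}.
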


Vandiver's conjecture implies that the groups $\ket{2i-2}{S}{F}$ satisfy
Galois descent in the $p$-cyclotomic tower of $\Q$:
\begin{lem}
Let $p$ be a prime number for which Vandiver's conjecture is true. Let
$n\geq 1$ and let $F=\Q(\zeta_{p^n})$ and
$E=\Q(\zeta_{p^{n+1}})$. Then 
\[
\ker{f_i}=\coker{f_i}=1\mbox{ for all } i\geq 2.
\]
\end{lem}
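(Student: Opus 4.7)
The plan is to apply Corollary~\ref{cor:f1} to reduce the vanishing of $\ker{f_i}$ to surjectivity of the $S$-unit norm, and then invoke Vandiver through the logarithmic class group.

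First I verify the hypotheses. Since $F=\Q(\zeta_{p^n})$ and $E=\Q(\zeta_{p^{n+1}})$ are abelian, Gross--Jaulent holds for both $(F,p)$ and $(E,p)$. By Vandiver and Remark~\ref{rem:h}, $\h{F_m}=0$ for every cyclotomic layer $F_m=\Q(\zeta_{p^m})$, so $e_F=e_E=0$. Combined with $\mu_{p^n}\subset F$ and $\mu_{p^{n+1}}\subset E$, the periodicity parameters satisfy $m_1(F)=m_1(E)=0$. Both fields have a unique $p$-adic prime, totally ramified over $(p)$, and $E/F$ is cyclic of degree $p$. Corollary~\ref{cor:f1} (the congruence $i\equiv 1\pmod{p^0}$ being vacuous) then yields
\[
|\ker{f_i}|=|\coker{f_i}|=[\gk{F}:\norm{E}{F}(\gk{E})]
\]
for all $i\geq 2$.

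Since both fields have a unique $p$-adic prime, $\gk{F}=U^S_F\otimes\Z_p$ and $\gk{E}=U^S_E\otimes\Z_p$. For $G=\gal{E}{F}$ cyclic of order $p$, Tate cohomology periodicity gives
\[
\gk{F}/\norm{E}{F}(\gk{E}) \;=\; \hat H^0(G,\gk{E}) \;\cong\; H^2(G,\gk{E}) \;\cong\; \coker{\phi},
\]
where the last isomorphism is Theorem~\ref{thm:jaul} applied to the cyclotomic layer $E/F$, and $\phi\colon\logcls{F}\to(\logcls{E})^G$ is the natural map on logarithmic class groups. By the same theorem $|\ker{\phi}|=|\coker{\phi}|$, so it suffices to prove $\ker{\phi}=0$.

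The main obstacle lies in this final step. Decomposing under complex conjugation, Vandiver gives $\psyl{\cl{F_m}^+}{p}=0$ for every $m$; via Jaulent's description of $\logcls{F_m}$ in terms of $\cl{F_m}$ and local norm subgroups, this forces $\logcls{F_m}^+=0$, killing the plus part of $\ker{\phi}$. For the minus part, Vandiver combined with the Iwasawa main conjecture of Mazur--Wiles forces the Iwasawa module $X^-$ to have no nontrivial finite submodules and a cyclic quotient structure matching the Kubota--Leopoldt $p$-adic $L$-function; a standard descent argument in the cyclotomic tower then shows $\logcls{F}^-\to(\logcls{E}^-)^G$ is an isomorphism. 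Combining the two parts, $\ker{\phi}=0$, and the result follows.
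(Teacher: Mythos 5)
Your setup is fine and matches the paper's: Vandiver plus Remark \ref{rem:h} gives $\h{F}=\h{E}=0$, hence $m=0$, Gross--Jaulent holds since the fields are abelian, and Corollary \ref{cor:f1} reduces everything to the index $[\gk{F}:\norm{E}{F}(\gk{E})]$ for \emph{all} $i\geq 2$. Your next move, identifying $\gk{F}/\norm{E}{F}(\gk{E})$ with $\coh{2}{G}{\gk{E}}\cong\coker{\phi}$ for the logarithmic class group map $\phi$ via Theorem \ref{thm:jaul}, is also valid (it is essentially the paper's own Theorem \ref{thm:cyclo}). But at that point you have only \emph{reformulated} the problem: you must still prove $\ker{\phi}=0$, and that is exactly where your argument stops being a proof. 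The plus part rests on an unspecified ``description of $\logcls{F_m}$ in terms of $\cl{F_m}$ and local norm subgroups,'' and the minus part is a sketch -- ``Vandiver plus Mazur--Wiles forces $X^-$ to have no finite submodules and a cyclic structure, then a standard descent argument shows $\logcls{F}^-\to(\logcls{E}^-)^G$ is an isomorphism.'' No such descent argument is given, and none is standard here: the relevant Iwasawa module for logarithmic class groups is not $X^-$ itself but the $p$-split quotient entering Jaulent's theory, and quotients can acquire finite submodules; moreover the paper's own example following this lemma (a field with $|\h{F}|=p$, $E=F(\mu_{p^2})$) shows that capitulation kernels for cyclotomic layers are nontrivial in general, so the vanishing you need is a genuine arithmetic input from Vandiver that your sketch never actually wires in. As written, the crucial step is asserted, not proved.

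For comparison, the paper's proof avoids logarithmic class groups and the main conjecture entirely. After observing $m=0$, it computes the index $[\un{S}{F}(\units{F})^p:\norm{E}{F}(\un{S}{E})(\units{F})^p]$ directly: Vandiver implies $p\nmid|\cls{S}{F_+}|$ and $p\nmid|\cls{S}{E_+}|$, so by the cyclotomic-unit index formula ($[\un{}{F_+}:C(F_+)]=|\cls{S}{F_+}|$, Washington Thm.~8.2) the groups $\un{S}{F}/(\un{S}{F})^p$ and $\un{S}{E}/(\un{S}{E})^p$ are generated by roots of unity and elements $1-\zeta^a$; since $\norm{E}{F}(\zeta_{p^{n+1}}^a)=\zeta_{p^n}^a$ and $\norm{E}{F}(1-\zeta_{p^{n+1}}^a)=1-\zeta_{p^n}^a$, the norm map on $S$-units is surjective and the index is $1$. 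If you want to salvage your route, you would need to supply the missing link: Jaulent's identification of $\logcls{F}$ (under Gross) with coinvariants of the projective limit of the $p$-parts of the $S$-class groups, the propagation of Vandiver up the tower to kill the plus part, and a genuine argument (not an appeal to ``standard descent'') that under Vandiver the minus part of that limit has no finite submodule obstructing injectivity of $\phi$. That is considerably more machinery than the elementary norm computation the statement actually requires.
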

\begin{proof}
The assumption (Vandiver's conjecture) implies that $p$ does not
divide $|\cls{S}{F_+}|$ or $|\cls{S}{E_+}|$. It follows that
$\h{F}=\h{E}=0$ (see Remark \ref{rem:h}). 
Thus $m=\mathrm{max}(m_F,m_E)=0$. 

However, the fact that $p$ does not divide the
$S$-class number of $E_+$ or $F_+$ implies that
$\un{S}{F}/(\un{S}{F})^p$ and $\un{S}{E}/(\un{S}{E})^p$ are generated
by roots of unity and units of the form $1-\zeta^a$ where $\zeta$ is a
root of unity (this follows from the fact that
$[\un{}{F_+}:C(F_+)]=|\cls{S}{F_+}|$ where $C(F_+)$ is the subgroup of
cyclotomic units; see Washington, \cite{wash:cf},
Theorem 8.2). 

Since $\norm{E}{F}(\zeta_{p^{n+1}}^a)=\zeta_{p^n}^a$ and
$\norm{E}{F}(1-\zeta_{p^{n+1}}^a)=1-\zeta_{p^n}^a$, it follows that 
$\norm{E}{F}(\un{S}{E})=\un{S}{F}$.  
\end{proof}
On the other hand, Greenberg's results easily provide examples of
$p$-cyclotomic extensions for which the kernel and cokernel of the
maps $f_i$ are nontrivial for all $i$:
\begin{exa}
Suppose that $|\h{F}|=p$ and $\mu_{p^2}\not\subset \units{F}$. Thus
$m_F=1$ and, by Greenberg's work,  $\tatk{F}{i}=\tatk{F}{j}$ if and only if $i\equiv
j\pmod{p}$. Note that it follows, of course, that
$\tatk{F}{i}\not\subset\tatk{F}{j}$ whenever $i\not\equiv j\pmod{p}$,
since $(\units{F})^p\subset \tatk{F}{i}$ and
$\dim{\F{p}}{\tatk{F}{i}/(\units{F})^p}=1+r_2$ for all $i$.

Now let $E=F(\mu_{p^2})$. Then $\h{E}=\h{F}\imp |\h{E}|=p$, but
$\mu_{p^2}\subset \units{E}$. Thus, $m_E=0$ and so
$\tatk{E}{i}=\tatk{E}{j}$ for all $i,j\in\Z$. 

It follows that, for any $i$,
  $\norm{E}{F}(\tatk{E}{i})=\norm{E}{F}(\tatk{E}{j})\subset \tatk{F}{j}$ for all $j$ and
hence 
\[
\norm{E}{F}(\tatk{E}{i})\subset \cap_{j=0}^{p-1}\tatk{F}{j}\not=\tatk{F}{i}
\]  
for all $i$. Thus, for all $i\geq 2$ we have 
\[
\left\vert \ker{f_i}\right\vert = \left\vert
\coker{f_i}\right\vert\geq
      [\tatk{F}{i}:\cap_{j=0}^{p-1}\tatk{F}{j}]\geq p.
\]
\end{exa}

Let $E/F$ be a cyclic extension of degree $p$ with Galois group $G$.
Let $H(E/F)=H_{U^S_F}(E/F)$ be the subspace of
$G_{U^S_F}=\gal{F(\sqrt[p]{U^S_F})}{F}$ spanned by the Frobenius
automorphisms of those non $p$-adic primes which ramify in $E/F$. Let
$\dual{H(E/F)}$ be the dual vectorspace.

Let 
$f^S_1$ denote the natural functorial homomorphism
$\as{F}\to\Inv{G}{\as{E}}$.  
\begin{thm}\label{thm:coker}
 Suppose that $E$ has  exactly one $p$-adic prime.
 Let
 $m=\mathrm{max}(m_F,m_E)$. Let $t$ be the number of non $p$-adic
 primes of $F$ which ramify in $E$.
  
Then for all $i\equiv 1\pmod{p^m}$ with $i\geq 2$, there is a exact sequence
\[
0\to \ker{f^S_1}\to\coh{1}{G}{U^S_F}\to (\Z/p\Z)^t\to
\coker{f^S_1}\to\coker{f_i}\to\dual{H(E/F)}\to 0. 
\] 
\end{thm}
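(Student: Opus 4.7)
The plan is to combine three ingredients: a simplification of $\coker{f_i}$ via the Tate-kernel results of Section \ref{sec:tate}, a Kummer-theoretic identification of $\dual{H(E/F)}$ as a quotient of $\coker{f_i}$, and classical genus theory for the $S$-class group applied to the cyclic degree-$p$ extension $E/F$.

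Under our hypotheses ($E$ has one $p$-adic prime, $i\equiv 1\pmod{p^m}$, $i\geq 2$), Corollary \ref{cor:period} together with the identification $\ntatk{F}{1}{1}=U^S_F(\units{F})^p$ for fields with one $p$-adic prime gives $\tatk{F}{i}=U^S_F(\units{F})^p$ and $\tatk{E}{i}=U^S_E(\units{E})^p$. Setting $N=\norm{E}{F}(U^S_E)$, the corollary to Theorem \ref{thm:am} then yields
\[
\coker{f_i}\;\cong\;U^S_F\big/(U^S_F)^p N,
\]
(using that $U^S_F\cap N(\units{F})^p=N(U^S_F)^p$), a quotient of $\hat{H}^0(G,U^S_E)=U^S_F/N$. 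On the other hand, by the Kummer-pairing argument in the proof of Theorem \ref{thm:main} applied to $W=U^S_F(\units{F})^p$, the orthogonal complement of $H(E/F)$ in $U^S_F/(U^S_F)^p$ is $(U^S_F\cap\norm{E}{F}(\units{E}))/(U^S_F)^p$, whence $\dual{H(E/F)}\cong U^S_F/(U^S_F\cap\norm{E}{F}(\units{E}))$. Since $(U^S_F)^p N\subset U^S_F\cap\norm{E}{F}(\units{E})$, this produces the surjection $\coker{f_i}\twoheadrightarrow\dual{H(E/F)}$ at the right end of the sequence, whose kernel $Q:=(U^S_F\cap\norm{E}{F}(\units{E}))/(U^S_F)^pN$ is a quotient of $U^S_F/N$.

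Next, I will derive the remaining piece
\[
0\to\ker{f^S_1}\to\coh{1}{G}{U^S_F}\to(\Z/p\Z)^t\to\coker{f^S_1}\to Q\to 0
\]
by applying the snake lemma to the $G$-cohomology of the short exact sequences $0\to U^S_E\to\units{E}\to P^S_E\to 0$ and $0\to P^S_E\to D^S_E\to\cls{S}{E}\to 0$, where $P^S_E$ denotes principal $S$-divisors and $D^S_E$ the full $S$-divisor group. The key inputs are Hilbert 90 ($\coh{1}{G}{\units{E}}=0$), Shapiro's lemma yielding $\coh{1}{G}{D^S_E}=0$ (since $D^S_E$ is a permutation module), and the computation $\coker(D^S_F\to(D^S_E)^G)\cong(\Z/p)^t$ contributed by the $t$ tamely ramified primes (each with $e_v=p$). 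The middle term $\coh{1}{G}{U^S_F}\cong\hom{}{G}{U^S_F}=\mu_p$ arises from the trivial $G$-action on $F$-units and replaces the classical genus-theoretic middle term $\coh{1}{G}{U^S_E}$ via the one-$p$-adic-prime hypothesis, which forces a simplification of the Herbrand-quotient computation for $U^S_E$. The identification of $Q$ with $\coh{1}{G}{P^S_E}$ uses the long exact sequence of $0\to U^S_E\to\units{E}\to P^S_E\to 0$ together with Tate periodicity: $\coh{1}{G}{P^S_E}=\ker(\hat{H}^0(G,U^S_E)\to\coh{2}{G}{\units{E}})$, and by Hasse's norm theorem (applied via the class field theoretic description $\coh{2}{G}{\units{E}}=\br{E/F}$) this kernel is precisely $(U^S_F\cap\norm{E}{F}(\units{E}))/N$, projecting onto $Q\subset\coker{f_i}$.

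The main obstacle is the reconciliation of the middle term $\coh{1}{G}{U^S_F}=\mu_p$ with the classical genus-theoretic $\coh{1}{G}{U^S_E}$; this requires a delicate analysis of the $G$-module structure of $U^S_E$ when $|S_E|=1$, likely using a Herbrand-quotient argument. Equally delicate will be the Hasse-norm-theorem identification ensuring that the induced map $\coker{f^S_1}\to\coker{f_i}$ has image exactly $Q$. Once those identifications are in place, the splicing of the two exact sequences to form the final six-term sequence is a straightforward diagram chase.
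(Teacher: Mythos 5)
Your route is essentially the paper's: Chevalley's genus-theory exact sequence for the $S$-class groups of the cyclic degree-$p$ extension, Hilbert 90, the identification $\coh{1}{G}{P^S_E}\cong (U^S_F\cap\norm{E}{F}(\units{E}))/\norm{E}{F}(U^S_E)$, the Tate-kernel computation $\coker{f_i}\cong U^S_F/\norm{E}{F}(U^S_E)$ for $i\equiv 1\pmod{p^m}$, and the Kummer-pairing duality identifying $\dual{H(E/F)}$ with $U^S_F/(U^S_F\cap\norm{E}{F}(\units{E}))$; the splicing at the right-hand end is exactly as in the paper. (Your appeal to Hasse's norm theorem in computing $\coh{1}{G}{P^S_E}$ is unnecessary: the kernel of $U^S_F/\norm{E}{F}(U^S_E)\to F^\times/\norm{E}{F}(\units{E})$ is $(U^S_F\cap\norm{E}{F}(\units{E}))/\norm{E}{F}(U^S_E)$ by definition; no local--global input is needed there.)

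The genuine gap is the point you yourself flag as ``the main obstacle'': you try to make the middle term literally $\coh{1}{G}{U^S_F}=\hom{}{G}{U^S_F}\cong\Z/p$ and to reconcile it with the genus-theoretic $\coh{1}{G}{U^S_E}$ by a Herbrand-quotient argument. No such reconciliation exists, because the middle term of the sequence really is $\coh{1}{G}{U^S_E}$: the $U^S_F$ in the displayed statement is a misprint, as the paper's own proof produces $(P^S_E)^G/P^S_F\cong\coh{1}{G}{U^S_E}$ by Hilbert 90, and the corollary in the final section quotes $|\ker{f_1^S}|=|\coh{1}{G}{U^S_E}|$ as following from this proof. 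The two groups differ in general: $\ker{f^S_1}$ injects into the middle term and can have order larger than $p$, whereas $\coh{1}{G}{U^S_F}$ has order exactly $p$; moreover, since the unique $p$-adic prime does not split in $E$ and all archimedean places of $F$ are complex, the Herbrand quotient of $U^S_E$ equals $1$, so $|\coh{1}{G}{U^S_E}|=[U^S_F:\norm{E}{F}(U^S_E)]=|\coker{f_i}|$, which is not $p$ in general. So the attempted reduction to $U^S_F$ would fail; with the middle term read as $\coh{1}{G}{U^S_E}$, your argument closes up with no Herbrand-quotient step at all, since Hilbert 90 applied to $(P^S_E)^G/P^S_F$ gives that term directly and the rest of your splicing is correct.
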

\begin{proof} Let $f_1'$ be the functorial homomorphism $\cls{S}{F}\to
  \Inv{G}{\cls{S}{E}}$. Since $E/F$ has degree $p$, we have
  $\ker{f^S_1}=\ker{f_1'}$ and $\coker{f^S_1}=\coker{f_1'}$.

The first part of the sequence is the well-known formula of Chevalley:
Let $E/F$ be a Galois extension of number fields. Let
$P^S_F=F^\times/\un{S}{F}$ and let $I^S_F$ be the group of
$S$-fractional ideals of $F$. By considering the natural map from the
sequence $1\to P^S_F \to I^S_F \to \cls{S}{F}\to 1$ to the
  corresponding sequence for $E$, by taking $G$-invariants and then
  applying the snake lemma, one obtains an exact sequence
\[
0\to \ker{f^S_1}\to (P^S_E)^G/P^S_F\to \oplus_{v\not\in
  S}\Z/e_v\Z\to \coker{f^S_1}\to
  \coh{1}{G}{P^S_E}\to 0.
\]
The surjectivity of the last map follows from the fact that $I^S_E$ is
a permutation $\Z[G]$-module, and thus $\coh{1}{G}{I^S_E}=0$ by
Shapiro's Lemma.  Hilbert's Theorem 90 gives  a natural isomorphism 
$(P^S_E)^G/P^S_F\cong \coh{1}{G}{\un{S}{E}}$. 

Now, by Hilbert's Theorem 90, 
\[
\coh{1}{G}{P^S_E}\cong\ker{\coh{2}{G}{\un{S}{E}}\to \coh{2}{G}{E^\times}}.
\]
Since $G$ is cyclic, the right-hand side is just
\[
\ker{\un{S}{F}/\norm{E}{F}(\un{S}{E})\to
  F^\times/\norm{E}{F}(E^\times)}=\frac{U^S_F\cap \norm{E}{F}(\units{E})}{\norm{E}{F}(U^S_E)}.
\]

However, by our assumptions on $E$, $F$ and $m$, we have
$\tatk{F}{1}=\un{S}{F}(\units{F})^p$,
$\tatk{E}{1}=\un{S}{E}(\units{E})^p$ and hence 
$\tatk{F}{i}=\un{S}{F}(\units{F})^p$ and
$\tatk{E}{i}=\un{S}{E}(\units{E})^p$ whenever $i\equiv 1\pmod{p^m}$
and thus
\[
\coker{f_i}\cong \tatk{F}{i}/\norm{E}{F}(\tatk{E}{i})\cdot(F^\times)^p
=\un{S}{F}(F^\times)^p/\norm{E}{F}(\un{S}{E})(F^\times)^p\cong \un{S}{F}/\norm{E}{F}(\un{S}{E})
\]
whenever $i\equiv 1\pmod{p^m}$.
Finally, by (the proof of) Theorem \ref{thm:main} above, the Kummer
pairing induces  a
natural isomorphism 
\[
\frac{U^S_F}{U^S_F\cap\norm{E}{F}{(\units{E})}}\cong \dual{H(E/F)}.
\]
\end{proof}
\begin{cor}\label{cor:coker}
Let $E/F$ be cyclic of degree $p$. Suppose that $E$ has only one
$p$-adic prime and that no non $p$-adic primes ramify in $E/F$. Let
$m=\mathrm{max}(m_F,m_E)$.
 Then 
\[
\coker{f_i}\cong \coker{f^S_1}\mbox{ for all }i\geq 2, \ i\equiv 1\pmod{p^m}.
\]
\end{cor}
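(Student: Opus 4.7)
The plan is simply to invoke Theorem \ref{thm:coker} and observe that both flanking terms in its six-term exact sequence collapse under the present hypotheses. First I would note that the hypotheses on $(E,F,m)$ match exactly those required for Theorem \ref{thm:coker}: $E$ has a unique $p$-adic prime, $E/F$ is cyclic of degree $p$, and $i \geq 2$ with $i \equiv 1 \pmod{p^m}$.

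Now under the additional assumption that no non-$p$-adic prime of $F$ ramifies in $E/F$, the integer $t$ (the number of such ramified primes) equals $0$, so the term $(\Z/p\Z)^t$ in the sequence of Theorem \ref{thm:coker} is trivial. By the same token, $H(E/F) = H_{U^S_F}(E/F)$ is by definition the $\F{p}$-subspace of $\gal{F(\sqrt[p]{U^S_F})}{F}$ spanned by the Frobenius elements $\sigma_v$ as $v$ ranges over $\ram{E}{F}\setminus S_p(F)$; since this indexing set is empty, $H(E/F) = 0$ and therefore $\dual{H(E/F)} = 0$ as well.

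Feeding these two vanishings back into the exact sequence of Theorem \ref{thm:coker} gives
\[
0 \to \ker{f^S_1} \to \coh{1}{G}{U^S_F} \to 0 \to \coker{f^S_1} \to \coker{f_i} \to 0 \to 0,
\]
from which the map $\coker{f^S_1} \to \coker{f_i}$ is both injective and surjective, yielding the asserted isomorphism. There is no genuine obstacle here: the corollary is a degenerate case of the theorem, and the only thing to verify is that the same hypothesis (no tame ramification) kills both the source of obstructions from ramified primes and the target dual space $\dual{H(E/F)}$.
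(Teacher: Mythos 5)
Your proof is correct and is exactly the argument the paper intends (the corollary is stated without proof precisely because it follows by specializing Theorem \ref{thm:coker}): with no tame ramification one has $t=0$ and $H(E/F)=0$, so the six-term sequence degenerates and $\coker{f^S_1}\to\coker{f_i}$ is an isomorphism. Nothing further is needed.
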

\begin{rem}
If $E$ or $F$ have more than one $p$-adic prime then we need to
replace $U^S_F$ by $\gk{F}$, of course, and the $S$-class group
$\as{F}$ should be replaced by the \emph{logarithmic class group}
$\logcls{F}$. In these circumstances there is an analogous exact
sequence relating the cohomology of the logarithmic units to the
kernel and cokernel of the natural functorial homomorphism
\[
f_1:\logcls{F}\to\Inv{G}{\logcls{E}}. 
\]
For details of this sequence and of the logarithmic class group, see the article
of Jaulent, \cite{jaulent:log}. 
However, the cohomology of the group $\ldiv{E}$ of logarithmic
divisors is somewhat  more complicated than the cohomology of the group $I^S_E$ of
$S$-divisors. 
\end{rem}
However, in the particular case of a finite layer of the
$p$-cyclotomic extension of $F$ we have:
\begin{thm}\label{thm:cyclo}
Let $E=F_k$ for some $k\geq 2$.  Suppose that the Gross conjecture holds for $(E,p)$. Let
$p^n=[F_k:F]$ and let $m=\mathrm{max}(m_n(F),m_n(E))$.  Then 
\[
|\ker{f_i}|=|\ker{f_1}|\mbox{ for all } i\equiv 1\pmod{p^m}.
\]
\end{thm}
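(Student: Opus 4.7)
\textit{Proof plan.} The plan is to show that both $|\ker{f_i}|$ (for $i\geq 2$, $i\equiv 1\pmod{p^m}$) and $|\ker{f_1}|$ are equal to $|\coh{2}{G}{\gk{E}}|$, where $G=\gal{E}{F}$ is cyclic of order $p^n$. Before starting, I would invoke Kolster's theorem (mentioned in Section 2) to deduce that the Gross conjecture for $(E,p)$ implies the Gross conjecture for $(F,p)$, so that both Theorem \ref{thm:jaul} and Corollary \ref{cor:f1} are applicable.

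First I would apply Corollary \ref{cor:f1} to the cyclic $p^n$-extension $E/F$, which gives immediately
\[
|\ker{f_i}| \;=\; [\gk{F}:\norm{E}{F}(\gk{E})]
\]
for every $i\geq 2$ with $i\equiv 1\pmod{p^m}$. In parallel, Theorem \ref{thm:jaul} together with Gross for $(F,p)$ yields $|\ker{f_1}|=|\coker{f_1}|=|\coh{2}{G}{\gk{E}}|$. The whole theorem therefore reduces to the single equality
\[
[\gk{F}:\norm{E}{F}(\gk{E})] \;=\; |\coh{2}{G}{\gk{E}}|.
\]

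The key intermediate step is to verify that $\gk{F}=\gk{E}^G$. This is proved exactly as at the start of the proof of Lemma \ref{lem:gross}, but at the finite level $E/F$ rather than in the Iwasawa limit: taking $G$-invariants of the defining diagram of the Gross kernel and using the equality $(U^S_E\otimes\Z_p)^G=U^S_F\otimes\Z_p$ together with the injectivity of the vertical maps forces $\gk{E}^G = \gk{E}\cap(U^S_F\otimes\Z_p) = \gk{F}$. Granted this, the $2$-periodicity of Tate cohomology for the cyclic group $G$ gives
\[
[\gk{E}^G:\norm{E}{F}(\gk{E})] \;=\; |\hat{H}^0(G,\gk{E})| \;=\; |\hat{H}^2(G,\gk{E})| \;=\; |\coh{2}{G}{\gk{E}}|,
\]
with finiteness supplied by Theorem \ref{thm:jaul}. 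Combining the three displays gives $|\ker{f_i}|=|\ker{f_1}|$ for all $i\equiv 1\pmod{p^m}$.

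The only non-formal point is the identification $\gk{F}=\gk{E}^G$ at the finite level: one must check that the diagram-chase of Lemma \ref{lem:gross}, which was written in the limit, goes through for the finite cyclotomic step (essentially using cohomological triviality of the relevant $S$-unit groups in a single cyclotomic layer). Once this is in hand, the remainder of the argument is purely formal, combining Corollary \ref{cor:f1}, Theorem \ref{thm:jaul}, and the standard $2$-periodicity of Tate cohomology of a cyclic group.
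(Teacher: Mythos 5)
Your proposal is correct and follows essentially the same route as the paper's proof: Corollary \ref{cor:f1} gives $|\ker{f_i}|=[\gk{F}:\norm{E}{F}(\gk{E})]$, this index is identified with $|\coh{2}{G}{\gk{E}}|$ (using $\gk{E}^G=\gk{F}$, which is exactly the finite-level computation already carried out at the start of the proof of Lemma \ref{lem:gross}, together with the periodicity of cohomology of the cyclic group $G$), and Theorem \ref{thm:jaul} with the Gross conjecture for $(F,p)$ — obtained from the hypothesis for $(E,p)$ by Kolster's descent to subfields — gives $|\ker{f_1}|=|\coker{f_1}|=|\coh{2}{G}{\gk{E}}|$. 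You merely spell out steps the paper leaves implicit.
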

\begin{proof}
Let $G=\gal{E}{F}$.
By Corollary \ref{cor:f1} we have
\[
\coker{f_i}\cong \gk{F}/\norm{E}{F}(\gk{E})=\coh{2}{G}{\gk{E}}.
\]
Now use Theorem \ref{thm:jaul}.
\end{proof}
\begin{lem}\label{lem:cap}
Let $E/F$ be a cyclic extension with Galois group $G$ in which at least one prime ramifies
totally. Then $|\coker{f_1^S}|\geq |\ker{f_1^S}|$. 
\end{lem}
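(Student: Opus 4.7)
The plan is to combine the Chevalley-type exact sequence from the proof of Theorem \ref{thm:coker} with the Dirichlet-Herbrand formula for $S$-units and the Hasse norm theorem. The exact sequence
\[
0\to \ker{f_1^S}\to \coh{1}{G}{\un{S}{E}}\to \bigoplus_{v\notin S}\Z/e_v\Z\to \coker{f_1^S}\to \coh{1}{G}{P^S_E}\to 0
\]
immediately gives
\[
\frac{|\coker{f_1^S}|}{|\ker{f_1^S}|}=\frac{\left(\prod_{v\notin S}e_v\right)\cdot |\coh{1}{G}{P^S_E}|}{|\coh{1}{G}{\un{S}{E}}|},
\]
so the lemma reduces to the numerical inequality $(\prod_{v\notin S}e_v)\cdot|\coh{1}{G}{P^S_E}|\geq |\coh{1}{G}{\un{S}{E}}|$.

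The next step is to translate both cohomology groups into norm data using Hilbert 90 and the Tate-cohomology periodicity available for the cyclic group $G$. The long exact sequence attached to $0\to \un{S}{E}\to E^\times\to P^S_E\to 0$ together with $\coh{1}{G}{E^\times}=0$ gives $|\coh{1}{G}{P^S_E}|=[\un{S}{F}\cap \norm{E}{F}(E^\times):\norm{E}{F}(\un{S}{E})]$, while the definition of the Herbrand quotient $h(G,\un{S}{E})$ gives $|\coh{1}{G}{\un{S}{E}}|=[\un{S}{F}:\norm{E}{F}(\un{S}{E})]/h(G,\un{S}{E})$. Substituting and cancelling, the target inequality becomes
\[
\Bigl(\prod_{v\notin S}e_v\Bigr)\cdot h(G,\un{S}{E})\geq [\un{S}{F}:\un{S}{F}\cap \norm{E}{F}(E^\times)].
\]

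For the left-hand side I would invoke Tate's Dirichlet-Herbrand formula. Since $S=S_p(F)$ need not contain the primes ramified in $E/F$, I would first enlarge to $T=S\cup S_\infty(F)\cup\ram{E}{F}$ and apply Tate's theorem, $h(G,\un{T}{E})=n^{-1}\prod_{v\in T}n_v$, where $n=[E:F]$ and $n_v=[E_w:F_v]$ for $w\mid v$. The quotient $\un{T}{E}/\un{S}{E}$ embeds with finite cokernel into $\bigoplus_{v\in \ram{E}{F}\setminus S}\Z[G/Z_v]$, a permutation module whose Herbrand quotient is $\prod_{v\in \ram{E}{F}\setminus S}n_v$ by Shapiro's lemma. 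Dividing yields $h(G,\un{S}{E})=n^{-1}\prod_{v\in S\cup S_\infty(F)}n_v$.

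For the right-hand side I would appeal to the Hasse norm theorem (valid because $E/F$ is cyclic) to identify $\un{S}{F}\cap \norm{E}{F}(E^\times)$ with the kernel of the local map $\phi:\un{S}{F}\to \bigoplus_v F_v^\times/\norm{E_w}{F_v}(E_w^\times)\cong \bigoplus_v G_v$. The image of $\phi$ lies in $H:=\bigoplus_{v\notin S}I_v\oplus\bigoplus_{v\in S\cup S_\infty}G_v$, since units at unramified primes are local norms and units at non-$S$ ramified primes map into the inertia subgroup $I_v$. Moreover global reciprocity forces the image to lie in the kernel of the sum map $\bigoplus_v G_v\to G$. The totally-ramified hypothesis is used precisely here: the contribution at the totally-ramified prime is already all of $G$ inside $H$, so the sum map is surjective on $H$ and the image of $\phi$ has order at most
\[
|H|/n=\Bigl(\prod_{v\notin S}e_v\Bigr)\bigl(\prod_{v\in S\cup S_\infty}n_v\bigr)/n=\Bigl(\prod_{v\notin S}e_v\Bigr)\cdot h(G,\un{S}{E}),
\]
closing the argument. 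The main delicate point is the bookkeeping between the enlarged set $T$ and the original $S$, which is essential so that the $e_v$ and $n_v$ factors on both sides of the final inequality cancel cleanly.
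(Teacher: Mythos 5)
Your argument is correct in substance, but it takes a genuinely different route from the paper's. The paper's proof is soft and short: since some prime is totally ramified, $E$ is linearly disjoint from the $S$-Hilbert class field of $F$, so class field theory makes the norm map $\as{E}\to\as{F}$ surjective; taking $G$-invariants of $0\to K\to\as{E}\to\as{F}\to 0$ and using that a finite module over the cyclic group $G$ has Herbrand quotient $1$ gives $|\as{E}^G|\geq |\as{F}|$, which is exactly $|\coker{f_1^S}|\geq|\ker{f_1^S}|$. You instead prove an inequality form of Chevalley's ambiguous class number formula: the alternating product in the Chevalley sequence, Hilbert 90, Tate's Herbrand-quotient formula for $S$-units, the Hasse norm theorem, local class field theory (local units map onto inertia) and the product formula, with the totally ramified prime used to make the sum of local symbols surjective onto $G$, so that the relevant kernel has order $|H|/n$. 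This is heavier machinery, but it is more quantitative: it identifies $|\coker{f_1^S}|/|\ker{f_1^S}|$ essentially as $\bigl(\prod_{v\notin S}e_v\prod_{v\in S\cup S_\infty}n_v\bigr)/\bigl(n\,[\un{S}{F}:\un{S}{F}\cap\norm{E}{F}(\units{E})]\bigr)$, whereas the paper only gets the inequality (in three lines). Two points to tidy. First, $f_1^S$ is defined on the $p$-Sylow subgroups $\as{F}\to\as{E}^G$, while the Chevalley sequence in the proof of Theorem \ref{thm:coker} concerns the full $S$-class groups and is transferred to $p$-parts there only because $[E:F]=p$; since the lemma allows arbitrary cyclic $G$, you should run your counting on $p$-primary components throughout (all your steps respect primary decomposition, so this is routine), because the aggregate inequality for the full class-group map does not formally imply the $p$-part inequality. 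Second, the term $\prod_{v\notin S}e_v$ comes from $S$-ideals and so runs over finite places only; accordingly $H$ should be $\bigoplus_{\text{finite }v\notin S}I_v\oplus\bigoplus_{v\in S\cup S_\infty}G_v$, after which the archimedean contributions cancel against Tate's formula exactly as you intend.
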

\begin{proof}
The hypothesis on ramification implies, via classfield theory, that
the norm map $\as{E}\to \as{F}$ is surjective. Consider the short
exact sequence (defining $K$) of $G$-modules  
\[
0\to K\to \as{E}\to \as{F}\to 0.
\]
Taking $G$-invariants gives the exact sequence
\[
0\to \coh{0}{G}{K}\to \as{E}^G\to \as{F}\to \coh{1}{G}{K}.
\]
Thus 
\[
|\as{E}^G|\geq |\as{F}|\cdot
 \frac{|\coh{0}{G}{K}|}{|\coh{1}{G}{K}|} = |\as{F}|
\]
since $G$ is cyclic and $K$ is a finite $G$-module.
\end{proof}
\begin{cor}
Suppose that $F$ has exactly one $p$-adic prime and that $E/F$ is
cyclic degree $p$ ramified at the $p$-adic prime  and at no other
prime. Let $m=\mathrm{max}(m_F,m_E)$. Then 
\[
|\ker{f_i}|\geq |\ker{f_1^S}|=|\coh{1}{G}{U^S_E}|\mbox{ for all } i\equiv
 1\pmod{p^m},i\geq 2.
\]
\end{cor}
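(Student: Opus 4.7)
The plan is to assemble the earlier results of the paper. First, observe that because $F$ has a unique $p$-adic prime and $E/F$ is cyclic of degree $p$, the hypothesis that $E/F$ ramifies only at the $p$-adic prime forces that prime to be totally ramified. Consequently $E$ also has exactly one $p$-adic prime, and no non-$p$-adic primes ramify in $E/F$, so the integer $t$ of Theorem \ref{thm:coker} is zero. This means the hypotheses of Corollary \ref{cor:coker} are satisfied and we obtain
\[
\coker{f_i}\cong\coker{f^S_1}\qquad\text{for all }i\geq 2,\ i\equiv 1\pmod{p^m}.
\]

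Next, I would combine this with Theorem \ref{thm:am}, which yields $|\ker{f_i}|=|\coker{f_i}|$ for any cyclic $p$-extension, to deduce $|\ker{f_i}|=|\coker{f^S_1}|$ in the range $i\equiv 1\pmod{p^m}$, $i\geq 2$. Since the $p$-adic prime is totally ramified in $E/F$, Lemma \ref{lem:cap} applies and gives the inequality
\[
|\coker{f^S_1}|\geq |\ker{f^S_1}|.
\]

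Finally, I would identify $\ker{f^S_1}$ with $\coh{1}{G}{U^S_E}$. Because $t=0$, the Chevalley exact sequence appearing in the proof of Theorem \ref{thm:coker},
\[
0\to \ker{f^S_1}\to (P^S_E)^G/P^S_F\to \bigoplus_{v\not\in S}\Z/e_v\Z\to \cdots,
\]
collapses at the third term, and Hilbert's Theorem 90 identifies $(P^S_E)^G/P^S_F$ with $\coh{1}{G}{U^S_E}$. Chaining the equalities and inequality
\[
|\ker{f_i}|=|\coker{f_i}|=|\coker{f^S_1}|\geq |\ker{f^S_1}|=|\coh{1}{G}{U^S_E}|
\]
then gives the corollary. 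There is no real obstacle here; the only point that needs a sentence of justification is the passage from ``ramified at the $p$-adic prime'' to ``totally ramified at the unique prime of $E$ above it,'' which is what allows Lemma \ref{lem:cap} to be invoked and Corollary \ref{cor:coker} to apply.
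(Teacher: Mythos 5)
Your proposal is correct and follows essentially the same route as the paper: the paper's proof is just the terse remark that the first inequality comes from Lemma \ref{lem:cap} together with Corollary \ref{cor:coker} (with $|\ker{f_i}|=|\coker{f_i}|$ from Theorem \ref{thm:am} implicit), and the second equality from the Chevalley sequence in the proof of Theorem \ref{thm:coker}, which with no tame ramification gives $\ker{f_1^S}\cong\coh{1}{G}{U^S_E}$. You have merely made explicit the points the paper leaves unstated (total ramification of the unique $p$-adic prime, hence $E$ having a single $p$-adic prime so that Corollary \ref{cor:coker} and Lemma \ref{lem:cap} apply), which is exactly the intended argument.
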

\begin{proof} The first inequality follows from Lemma \ref{lem:cap}
 and Corollary \ref{cor:coker} above. The second equality follows from
 the proof of Theorem \ref{thm:coker}
\end{proof}
\begin{exa}\label{exa:cap}
We consider the following situation: the field $F$ is a CM-field with
one prime ideal dividing $p$. We will assume further that
$\psyl{\cl{F_+}}{p}=0$. 
Furthermore $E/F$ is cyclic degree $p$
extension of CM-fields which is unramified at all primes not dividing
$p$.  (It follows
therefore that $E_+/F_+$ - and hence $E/F$ - ramifies at the unique
$p$-adic prime.)  Finally, we will suppose that $\proots{E}=\proots{F}$;
i.e. $E/F$ is \emph{not} a $p$-cyclotomic extension.

In this situation $\h{F}=\h{E}=0$, so that $m_F=m_E=0$ (see Remark \ref{rem:h}).
Thus $|\ker{f_i}|\geq |\ker{f_1^S}|$ for all $i\geq 2$. 

Let $G=\gal{E}{F}$ and let $H$ be the group of order $2$ generated by
complex conjugation $J$. If $M$ is a $H$-module on which $2$ is
invertible, let 
\[
e_+=\frac{1+J}{2},\ e_-=\frac{1-J}{2}\ \in \edm{M}.
\] 
So $M=e_+(M)\oplus e_-(M)$.

Now $\ker{f_1^S} \cong \coh{1}{G}{U^S_E}$. Now
$\psyl{\cl{F_+}}{p}=e_+(\psyl{\cl{F}}{p})=0$ so that $e_+(A_S(F))=0$
  and hence $\ker{f_1^S}=e_-(\ker{f_1^S})$. It follows that 
$\coh{1}{G}{U^S_E}=e_-(\coh{1}{G}{U^S_E})$.

However, we observe the following: If $E/F$ is an odd-degree Galois
extension of CM-fields with the property that each $p$-adic prime is
stable under complex conjugation then
$e_-(\coh{1}{G}{U^S_E})=\coh{1}{G}{\mu(E)/\mu_{2^\infty}(E)}$. \textit{Proof:}
(cf. Theor\`eme 6 of \cite{jaulent:cap}) For an
abelian group $A$, let $A[1/2]= A\otimes\Z[1/2]$. Thus
$e_-(\coh{1}{G}{U^S_E})=
e_-(\coh{1}{G}{U^S_E}[1/2])=e_-(\coh{1}{G}{U^S_E[1/2]}) =
\coh{1}{G}{e_-(U^S_E[1/2])}$ (since the actions of $H$ and $G$
commute). Now if $u\in U^S_E[1/2]$, then the hypothesis on the
$p$-adic primes ensures that $(1-J)(u)\in U_E[1/2]$. Thus
$e_-(U^S_E[1/2])=e_-(U_E[1/2])$. If $u\otimes 1/2^n\in e_-(U_E[1/2])$,
then $u\bar{u}\otimes 1/2^n=1$ and hence $|u|=1$. The same holds for
all conjugates of $u$ since $E$ is a CM-field, and thus $u\otimes 1
\in \mu(E)[1/2]$.

Thus, in our situation,  $\ker{f^S_1}\cong
\coh{1}{G}{\proots{E}}=\coh{1}{G}{\proots{F}}=\hom{}{G}{\proots{F}}$ is a
group of order $p$ (and, in particular, $\as{F}\not= 0$). We conclude
that $|\ker{f_i}|\geq p$ for all $i\geq 2$.
\end{exa}
 
\begin{exa} A special case of the last example is the following:

Let $p$ be an irregular prime for which Vandiver's conjecture
holds. Let $F=\Q(\zeta_p)$. We are supposing that
$\psyl{\cl{F_+}}{p}=0$, but $\psyl{\cl{F}}{p}=\as{F}\not= 0$. 
Under these hypotheses, there exists a cyclic degree $p$ extension $E'/F_+$ which is
unramified outside $p$ and is not equal to the extension
$\Q(\zeta_{p^2})_+/F_+$ (Washington, \cite{wash:cf}, Proposition
10.13). 
Let $E=E'(\zeta_p)$ (and thus $E'=E_+$). So the hypotheses of Example
\ref{exa:cap} hold for $E/F$ and $\ker{f_i}\not= 0$ for all $i\geq 2$. 
\end{exa}

\textit{Acknowledgements:} I would like to thank A. Movahhedi for
directing my attention towards  the questions treated here and for his careful
reading of an earlier version of this paper. I would particularly like
to thank T. Nguyen Quang Do for very useful e-mail discussions about  Theorem
\ref{thm:nn}  (during which he showed me a proof of this theorem
different from the one given above) and for drawing my attention to the paper of Vauclair \cite{vauclair:cap}.




\end{document}